\newtheorem{theorem}{Theorem}[section]
\newtheorem{rmk}{Remark}[section]
\newtheorem{proposition}{Proposition}[section]
\newtheorem{lem}{Lemma}[section]
\newtheorem{definition}{Definition}
\newcommand{\pos}{\mathrm{pos}}
\newcommand{\Cov}{\mathrm{Cov}}
\newcommand{\MCG}{\mathcal{G}}
\newcommand{\te}{\textbf{e}}
\newcommand{\tu}{\textbf{u}}
\newcommand{\EE}{\mathbb{E}}
\newcommand{\RR}{\mathbb{R}}
\newcommand{\Rm}{\mathrm{m}}
\title{Ensemble Kalman Sampler: mean-field limit and convergence analysis}
\author{Zhiyan Ding}
\address{Mathematics Department, University of Wisconsin-Madison, 480 Lincoln Dr., Madison, WI 53705 USA.}
\email{zding49@math.wisc.edu}
\author{Qin Li} 
\address{Mathematics Department and Wisconsin Institutes of Discoveries, University of Wisconsin-Madison, 480 Lincoln Dr., Madison, WI 53705 USA.}
\email{qinli@math.wisc.edu}
\date{\today}
\thanks{The research of Z.D. and Q.L. was supported in part by National Science Foundation under award 1619778, 1750488 and Wisconsin Data Science Initiative. Both authors would like to thank Andrew Stuart for the helpful discussions.}
\begin{document}
\begin{abstract}
Ensemble Kalman Sampler (EKS) is a method introduced in~\cite{EnFL} to find approximately $i.i.d.$ samples from a target distribution. As of today, why the algorithm works and how it converges is mostly unknown. The continuous version of the algorithm is a set of coupled stochastic differential equations (SDEs). In this paper, we prove the wellposedness of the SDE system, justify its mean-field limit is a Fokker-Planck equation, whose long time equilibrium is the target distribution. We further demonstrate that the convergence rate is near-optimal ($J^{-1/2}$, with $J$ being the number of particles). These results, combined with the in-time convergence of the Fokker-Planck equation to its equilibrium~\cite{Carrillo-2003}, justify the validity of EKS, and provide the convergence rate as a sampling method.
\end{abstract}

\maketitle

\section{Introduction}
Sampling from a target distribution is a core problem in Bayesian statistics, machine learning and data assimilation. It has wide applications in atmospheric science, petroleum engineering, remote sensing and epidemiology in the form of volume computation, and bandit optimization~\cite{FABIAN198117,PES,Convexproblem,ATTS}.

A large number of sampling methods have been proposed, and many have shown to be successful under certain circumstances. This includes the traditional methods such as Markov chain Monte Carlo  (MCMC)~\cite{Robert2004,roberts1996}, Langevin dynamics based methods (including both the overdamped Langevin Monte Carlo~\cite{PARISI1981378,roberts1996,doi:10.1111/rssb.12183,DALALYAN20195278} and underdamped Langevin Monte Carlo~\cite{10.5555/3044805.3045080,10.5555/2969442.2969566,Cheng2017UnderdampedLM,eberle2019}) and the newly developed Stein Variational Gradient Descent (SVGD)~\cite{NIPS_Liu_SVGD}, Hamiltonian Monte Carlo methods~\cite{HMC,mangoubi2017rapid,10.5555/3327345.3327502}, and their different levels of combination (such as MALA)~\cite{roberts1996,LDMHA2002,dwivedi2018logconcave,MALA3}. Weighted particles are also considered, and this leads to importance sampling~\cite{IM1989,Doucet2001,SMCBOOK} and the birth-death Langevin sampler \cite{lu2019accelerating}. In the recent years, there has been a boost in designing and analyzing ensemble methods. This means a large number of particles are sampled first according to an easy-to-sample distribution (such as Gaussian or uniform), and moved around according to certain dynamics, hoping in finite time, they reconstruct the target distribution. Some famous methods in this direction include the Ensemble Kalman Inversion (EKI)~\cite{DAEnKF,Iglesias_2013}(derived from Ensemble Kalman filter~\cite{Evensen2003,DAEnKF}) and Kalman-Bucy filter~\cite{Bergemann_Reich10_mollifier,Bergemann_Reich10_local,deWiljes}). They have attracted a large amount of numerical and analytical studies~\cite{SS,SS2,DCPS,ding2019meanfield}. Also see a very insightful review~\cite{Reich2011}.

In~\cite{EnFL}, the authors, inspired by the idea from PDE gradient flow and the ensemble structure of EKI, proposed a new method, termed Ensemble Kalman Sampler (EKS). The method works rather well in computation and the intuition is clear. But to today, the rigorous theoretical justification is mostly unknown. In particular, the method runs $J$ number of particles for a certain amount of time $T$, but due to the lack of error analysis, we do now know to how to set these parameters to achieve a preset accuracy.

The main goal of the current paper is to give a sharp error estimate of the algorithm EKS in the linear setting. To do so, we first characterize the continuous version of the algorithm using a coupled SDE system. From there, we use the following tools: 1. the Lyapunov theory for showing the wellposedness of the SDE system; 2. the mean-field limit argument to transfer the analysis of the SDE system to a Fokker-Planck PDE; 3. the convergence analysis of Fokker-Planck equation. The second tool explains the convergence in $J$ and the third tool explains the convergence in $T$.

We emphasize that in~\cite{EnFL} the authors have already proved the exponential convergence in $T$, and our contribution is mainly in 1, showing the wellposedness, and 2, showing the mean-field limit. The wellposedness of the SDE is an interesting topic by itself, and it also serves as a crucial component in the carrying out the mean-field limit, deeming this part of analysis necessary. We also emphasize that it is not yet our interest to compare different sampling methods in this article. We would rather focus on one particular method (EKS) and give a sharp error estimate. Hopefully this serves as a building block for future investigations in comparing methods.

We also emphasize that it is a simple fact that EKS is \emph{not} a consistent method in the nonlinear setting, in the sense that it does not produce the target distribution for whatever $J$ and $T$. We confine our mean-field limit discussion to the linear setting merely because the method is not correct in the nonlinear setup, and the argument is rather trivial, rendering the proof for its mean-field limit pointless. We nevertheless discuss how wrong the method can be in Appendix~\ref{sec:appendixnonlinear}. In particular, we write down the SDE system that provides the correct convergence and compare it with what EKS uses. As will be stated later, that the SDE system EKS relies on is a mere finite difference approximation to the true SDE system, and thus could not be right when the finite difference approximation breaks down in the nonlinear setting. However, computing this correct SDE system does not seem to be numerically easy, especially because it loses the gradient-free property. As a consequence, we do not pursue it either practically (designing an algorithm), or theoretically (showing its mean-field limit rigorously). We do give the wellposedness proof for the correct SDE system in Appendix~\ref{sec:appendixnonlinear}.

Showing the wellposedness using the Lyapunov theory is a standard practice. Upon which we also obtain the boundedness of high moments. Proving the mean-field limit, however, becomes significantly more difficult for the particular set of SDEs we are investigating that arises from EKS. Indeed, there are many models whose mean-field limits have been rigorously established in literature, but we find the techniques are not entirely adaptable to our situation. As will be presented in Section~\ref{sec:mean_field}, to show the mean-field limit, we mainly adopt coupling method~\cite{Sznitman}, by first represent the PDE with its intrinsic SDE system, and then compare the two SDE systems (the one arises from the limiting PDE, and the original one derived from EKS).

 Different systems have different regularity of the transport and Brownian motion coefficients, leading to different levels of technical difficulties. The most straightforward situation is when the Brownian motion coefficient is a constant, and the transport coefficient satisfies some kind of Lipschitz condition. In this situation, by subtracting the two SDE systems, the Brownian motion terms vanish, and the transport term is bounded directly by the disparity of the SDEs, prompting the use of the Gr\"onwall inequality for the error bound. This situation is seen in~\cite{braun1977}. If the transport term is nonlinear, or even a functional of the SDE itself, as is the case in most practical settings, the Lipschitz condition is hard to obtain. One then manually draws a large domain to have the Lipschitz condition to hold true inside the domain, and compensate the rest of the proof by demonstrating that the probability for the particles to go outside the domain is small~\cite{Bolley_Carrillo,huang2018meanfield,Lazarovici2015AMF,jin2018random}. In particular, in~\cite{Bolley_Carrillo}, one bounds the exponential moment of particle ($\EE(e^{|u|})<\infty$), and in \cite{huang2018meanfield,Lazarovici2015AMF}, the singularities of the interaction kernel induced by the Poisson equation gives the guidance for the domain cut. However, for these methods to be used, it is crucial to have the coefficient for the Brownian motion being constants, so that when one compares the two systems, the Brownian motion effect vanishes. In the case when Brownian motion coefficient is non-constant, as seen in the Mckean-Vlasov case~\cite{Sznitman,Mlard1996} and neuron models~\cite{Boss-2014}, to the best of our knowledge, some kind of Lipschitz condition is used.

The SDE we encounter is different from the ones in the previous studies: it has functional coefficients for both the transport and the Brownian motion terms. This makes most available previous approaches not applicable to our setting. To overcome the difficulty, we employ a bootstrapping argument on $L^2$ norm. To a large extent, we first assume the error decays with certain rate in $J$, and show that such decay rate can be tightened, till we reach the threshold $-1/2+\epsilon$. In this tightening process, we apply the H\"older inequality, and move some of the weights to $L^q$ with $q>2$. To bound these terms, one needs the control of the high moments of the SDE and show it is independent of $J$. This part of the preparation work is done in the section \ref{sec:SDE} where we show the wellposedness for the SDE system. 

The rest of the paper is organized as the following. In Section~\ref{sec:method} we describe the algorithm and present its continuum limit. It is a set of coupled SDEs, and is the model that we will study. In Section~\ref{sec:main_results} we present the main results along with the roadmap of the proof. We divide the proof into three steps, and we summarize the results in each step. Technical proofs are collected in Section~\ref{sec:SDE}-\ref{sec:mean_field} and the flowchart of the relation between lemmas and propositions is presented at the end of Section~\ref{sec:main_results} after we present the roadmap.

\section{Ensemble Kalman Sampler and the continuum limit}\label{sec:method}
Ensemble Kalman Sampler (EKS) is an algorithm proposed in~\cite{EnFL} to find approximately i.i.d. samples from a target distribution. It is a core problem in Bayesian inverse problem and machine learning, in which the target distribution is usually the posterior distribution formulated through an inverse problem setup.

In inverse problems, measurements are taken to infer the unknown parameters in the physical system. Let $u \in \mathbb{R}^L$ be the to-be-reconstructed parameter and $y \in \mathbb{R}^K$ be the measurements, then a typically setup is to denote $\mathcal{G}$, a forward map, or the parameter-to-observable map, that maps $u$ to $y$:
\[
y = \mathcal{G}(u) + \eta\,.
\]
Here $\eta$ denotes the noise in the measurement-taking. While the forward problem amounts to finding $y$ for any given $u$, the inverse problem amounts to reconstructing $u$ from $y$ with some assumed knowledge on $\eta$. A typical assumption is to set $\eta\sim \mathcal{N}(0, \Gamma)$, a Gaussian noise independent of $u$, then the loss functional $\Phi(\cdot;y):\mathbb{R}^K\rightarrow\mathbb{R}$ becomes:
\[
\Phi(u;y)=\frac{1}{2}\left|y-\mathcal{G}(u)\right|^2_\Gamma\,,\quad\text{where}\quad\left|\ \cdot\ \right|_\Gamma:=\left|\Gamma^{-\frac{1}{2}}\ \cdot\ \right|\,.
\]
The Bayes' theorem states that the posterior density is the (normalized) product of the prior density and the likelihood function:
\begin{equation*}
\rho_\pos(u)=\frac{1}{Z}\exp{\left(-\Phi(u;y)\right)}\rho_0(u)\,,\quad\text{with}\quad Z:=\int_{\mathbb{R}^L}\exp\left(-\Phi(u;y)\right)\rho_0(u)du\,.
\end{equation*}
Here $Z$ serves as the normalization factor, $\exp\left(-\Phi(u;y)\right)$ serves as the likelihood function and $\rho_0$ serves as the prior density function that collects people's prior knowledge about the distribution of $u$. This posterior distribution represents the probability measure of the to-be-reconstructed parameter $u$, blending the prior knowledge and the collected data $y$, taking $\eta$, the measurement error into account. More details on Bayesian inversion can be found in \cite{Dashti2017,stuart_2010}.

When the prior distribution is a Gaussian, and the forward map is linear, the posterior distribution can be explicitly written down. Suppose the prior distribution is a Gaussian distribution with mean $u_0$ and covariance $\Gamma_0$:
\begin{equation}\label{priordistribution}
\rho_\text{prior}(u)\propto\exp\left(-\frac{1}{2}\left(u-u_0\right)^\top \Gamma^{-1}_0\left(u-u_0\right)\right)\,,
\end{equation}
and that $\mathcal{G}$ is linear, meaning: there exists a matrix $A$ so that
\begin{equation}\label{linear}
\mathcal{G}(\cdot)=A\cdot\,,\quad\text{with}\quad A\in\mathcal{L}(\mathbb{R}^L,\mathbb{R}^K)\,,
\end{equation}
then the cost function is:
\begin{equation}\label{PhiRdef}
\Phi_R(u;y)=\frac{1}{2}|y-Au|^2_{\Gamma}+\frac{1}{2}\left|u-u_0\right|^2_{\Gamma_0}\,.
\end{equation}
and the covariance and the mean of the posterior distribution are
\begin{equation}\label{def:Bustar}
\mathrm{Cov}^{-1}_{\rho_\text{pos}}=B=A^\top\Gamma^{-1}A+\Gamma^{-1}_0\,,\quad \EE_{\rho_\text{pos}}=u^\ast=B^{-1}\left(A^\top\Gamma^{-1}y+\Gamma^{-1}_0u_0\right)\,,
\end{equation}
which makes
\[
\rho_{\text{pos}}(u)\propto\exp\left(-\frac{1}{2}\left|u-u^\ast\right|^2_{B^{-1}}\right)\,.
\]

\subsection{Algorithm description}
The EKS is an algorithm for finding approximately i.i.d. samples for the target distribution $\rho_\pos$. Unlike the traditional methods such as MCMC and LMC in which particles are sequentially proposed, in ensemble type sampling methods, a large number of particles are drawn from potentially arbitrary distribution at the initial time, and are moved around by some kind of actions along time evolution. After certain time, it looks like the particles are drawn from the target distribution. Different ensemble methods use different strategies to introduce these physical actions. EKI, for example, introduces a linear line that connects the prior and the posterior distribution in the function space on the log scale, and in EKS, the authors design a gradient flow on the function space that drives any given function (with certain regularity) to the target one.

In theory, if the gradient flow is followed exactly, the target distribution can be found perfectly. However, the coefficients in the gradient flow depends on the underlying solution itself, which is not available numerically. So numerically one replaces it by its ensemble version, hoping such replacement does not cause too much error. Showing the mean-field limit essentially comes down to justifying that this error brought by the replacement is indeed small.

The method is summarized in Algorithm \ref{ALG1}.

\begin{algorithm}[h]
\caption{\textbf{Ensemble Kalman sampler}}\label{ALG1}
\begin{algorithmic}
\State \textbf{Preparation:}

\State 1. Input: $J$ (number of particles); $h$ (stepsize); $N$ (stopping index); $\Gamma$; $\Gamma_0$; and $y$ (data).
\State 2. Initial: $\{u^j_0\}$ sampled from a initial distribution induced by a density function $\rho_0$.

\State \textbf{Run: } Set time step $n=0$;
\State \textbf{While} $n<N$:

1. Define empirical means and covariance:
\begin{align}\label{eqn:en_mean_var}
\overline{u}_n=\frac{1}{J}\sum^J_{j=1}u^j_n\,,\quad&\text{and}\quad\overline{\MCG}_n=\frac{1}{J}\sum^J_{j=1}\MCG(u^j_n)\,,\nonumber\\
\Cov_{u_n,u_n}=\frac{1}{J}\sum^J_{j=1}\left(u^j_n-\overline{u}_n\right)\otimes \left(u^j_n-\overline{u}_n\right)\,,\quad &\text{and}\quad
\Cov_{u_n,\mathcal{G}_n}=\frac{1}{J}\sum^J_{j=1}\left(u^j_n-\overline{u}_n\right)\otimes \left(\MCG(u^j_n)-\overline{\MCG}_n\right)\,.
\end{align}

2. Update ensemble particles ($\forall 1\leq j\leq J$)
\begin{equation}\label{eqn:update_ujn}
\begin{aligned}
&u^j_{*,n+1}=u^j_n-h\Cov_{u_n,\mathcal{G}_n}\Gamma^{-1}\left(\MCG(u^j_n)-y\right)-h\Cov_{u_n,u_n}\Gamma^{-1}_0\left(u^j_{*,n+1}-u_0\right)\,,\\
&u^j_{n+1}=u^j_{*,n+1}+\sqrt{2h\Cov_{u_n,u_n}}\xi^j_n\,,\quad\text{with}\quad\xi^j_{n+1}\sim \mathcal{N}(0,\mathrm{I})\,.
\end{aligned}
\end{equation}

3. Set $n\to n+1$.
\State \textbf{end}
\State \textbf{Output:} Ensemble particles $\{u^j_N\}$.
\end{algorithmic}
\end{algorithm}

There are a few parameters in the algorithm:
\begin{itemize}
\item[1.] $T=Nh$ is the stopping time, with $h$ being the stepsize, and $N$ being the number of iterations. The hope is to show the convergence to the target distribution is exponentially fast in $T$.
\item[2.] $J$ is the number of particles fixed ahead of time. The hope is to show that when $J\gg 1$, the ensemble distribution of the particles converges to the target distribution at the order of $1/\sqrt{J}$ for any finite $T$. This is the optimal rate one can hope for in the framework of Monte Carlo.
\item[3.] $\rho_0$ is the initial density function. It is not necessarily required that $\rho_0$ being equivalent to $\rho_\text{prior}$. As will be shown in the later sections, the mean-field limit argument holds true as long as $\rho_0$ is smooth and have bounded high moments.
\end{itemize}

\subsection{Continuum limit of Ensemble Kalman sampler}
The algorithm is discrete in time. As $h\to 0$, one achieves its continuum limit. In particular, setting $h\rightarrow0$ in~\eqref{eqn:update_ujn}, one has, for all $j$:
\begin{equation}\label{Ecov_dis_revise}
du^{j}_t=-\mathrm{Cov}_{u_t,\mathcal{G}_t}\Gamma^{-1}(\mathcal{G}(u^j_t)-y)dt-\mathrm{Cov}_{u_t,u_t}\Gamma^{-1}_0(u^j_t-u_0)dt+\sqrt{2\mathrm{Cov}_{u_t,u_t}}dW^{j}_t\,,
\end{equation}
where $\mathrm{Cov}_{u_t,\mathcal{G}_t}$, $\mathrm{Cov}_{u_t,u_t}$ are empirical variances similarly defined as in~\eqref{eqn:en_mean_var}.

In the linear setting, assuming~\eqref{linear}, with the $\Phi_R$ definition in~\eqref{PhiRdef}, equation~\eqref{Ecov_dis_revise} can be written as:
\begin{equation}\label{CAmunew}
du^{j}_t=-\mathrm{Cov}_{u_t,u_t}\nabla\Phi_R(u^j_t)dt+\sqrt{2\mathrm{Cov}_{u_t,u_t}}dW^{j}_t\,.
\end{equation}

We further define $M_{u_t}(du)$ to be the ensemble distribution:
\begin{equation}\label{eqn:empirical}
M_{u_t}=\frac{1}{J}\sum^{J}_{j=1}\delta_{u^{j}_t}\,.
\end{equation}
The goal of this paper is to give a quantitative estimate of how this empirical distribution, with the particles guided in~\eqref{CAmunew} converges to the target distribution in both time $T$ and the number of particles $J$, in Wasserstein distance.

\begin{rmk}
Some remarks are in order:
\begin{itemize}
\item It has been a tradition to design sampling method that converges as $J\to\infty$, namely as $J\to\infty$ in long time the ensemble distribution becomes the invariant measure (the target distribution). In a five-page small note~\cite{nusken2019note}, the authors provide a very insightful adjustment to the ``flux" term so that the invariant measure can be achieved by any finite number of samples in long time as well.
\item The thorough numerical analysis should also include $h$ dependence. Namely, one should prove $M_{u_t}$ converges to $\rho_\pos$ when $h\to 0$, $T\to\infty$ and $J\to\infty$. The $h\to0$ amounts to give a rigorous justification of the Euler-Maruyama method for the SDE~\eqref{CAmunew}. We regard this part of the work detached from the current setting, both in terms of the goal, and in them of the required technicality, and we do not pursue the direction.
\item In the original paper~\cite{EnFL} the authors arrived at~\eqref{Ecov_dis_revise} using the approximation
\[
\mathrm{Cov}_{u_t,\mathcal{G}_t}\Gamma^{-1}(\mathcal{G}(u_t)-y)\approx \mathrm{Cov}_{u_t}\nabla_u\left[\frac{1}{2}\left|\mathcal{G}(u_t)-y\right|^2_\Gamma\right]\,.
\]
This approximation holds true only if $\mathcal{G}$ is completely linear. This suggests the algorithm is valid only in the linear setting. This is the main reason for us to confine our mean-field investigation to the linear setup. Potentially when $\mathcal{G}$ is close to a linear function, the approximation would still be rather accurate. In Appendix~\ref{sec:appendixnonlinear} we study how wrong it would be if $\mathcal{G}$ is nonlinear. We should also emphasize, in the study of data assimilation (instead of sampling) methods, dynamics is introduced to add time dependence to the parameter $u$, and one uses both the data $y$ and the underlying dynamics to configure $u$. In those setups, nonlinear dynamics was indeed discussed, as seen in~\cite{Reich2011,deWiljes}. However, in both cases, $y$'s dependence on $u$ is still linear. We have not seen analytical studies of ensemble methods that investigate sampling of $u$ using $y$ information when the relation is nonlinear except that in~\cite{ding2019meanfield}.
\end{itemize}
\end{rmk}

\section{Main results and strategy of our proof}\label{sec:main_results}
We now present our main results and the roadmap of proof in this section. In the end the goals can be split into the following three sub-goals:
\begin{itemize}
\item[No. 1:] The SDE system~\eqref{CAmunew} derived directly from the algorithm EKS is a wellposed system. The precise statement of this result is in Theorem~\ref{ThforM}.
\item[No. 2:] The mean-field limit of the SDE system~\eqref{CAmunew} is the Fokker-Planck equation:
\begin{equation}\label{meanfieldstep2}
\partial_t\rho=\nabla\cdot(\rho\Cov_{\rho(t),\mathcal{G}}\Gamma^{-1}(\mathcal{G}(u)-y))+\nabla\cdot(\rho\Cov_{\rho(t)}\Gamma^{-1}_0(u-u_0))+\mathrm{Tr}\left(\Cov_{\rho(t)}D^2\rho\right)\,.	
\end{equation}
When $\mathcal{G}$ is linear \eqref{linear}, \eqref{meanfieldstep2} can be written as:
\begin{equation}\label{FKPK}
\left\{
\begin{aligned}
&\partial_t\rho=\nabla\cdot(\rho\Cov_{\rho(t)}\nabla\Phi_R(u))+\text{Tr}\left(\Cov_{\rho(t)}D^2\rho\right)\\
&\rho(u,0)=\rho_0
\end{aligned}\quad,\right.
\end{equation}
where $\Phi_R(u;y)$ is defined in \eqref{PhiRdef}. This means $M_{u_t}$ converges to $\rho$ as $J\to\infty$. The precise statement of this result is in Theorem~\ref{thm:mean_field}.
\item[No. 3:]  The solution to the Fokker-Planck equation converges to the target distribution, meaning $\rho(t)\to\rho_\pos$ as $t\to\infty$. The precise statement of this result is in Theorem~\ref{thm:FKPK_conv}.
\end{itemize}

Finally we combine Theorem~\ref{thm:mean_field} and~\ref{thm:FKPK_conv} and obtain the theorem that justifies the $J,T$ convergence of the EKS (assuming $h=0$). The statement is found in Theorem~\ref{thm:main}.

We emphasize that the goal No. 3 is a direct result of \cite{EnFL,carrillo2019wasserstein}. The main contribution of the paper is to provide the proof for wellposedness (Theorem~\ref{ThforM}) and mean-field limit (Theorem~\ref{thm:mean_field}).

Before presenting the results, we first unify the notations. Throughout the paper we denote
\[
\Cov_{m,n}=\frac{1}{J}\sum^J_{j=1}\left(m^{j}_t-\overline{m}_t\right)\otimes\left(n^{j}_t-\overline{n}_t\right)\,,
\]
the covariance of any vectors $\{m^j\}^J_{j=1}$ and $\{n^j\}^J_{j=1}$, and abbreviate $\ \mathrm{Cov}_m=\Cov_{m,m}$. Here $\otimes$ means the first argument is viewed as a column vector while the second is viewed as the row vector. Similarly, for any probability density function $\rho$ and function $g$, we denote
\[
\EE_{\rho}=\int_{\mathbb{R}^L}u\rho(u)du\,,\quad \EE_{g,\rho}=\int_{\mathbb{R}^L}g(u)\rho(u)du\,,
\]
\[
\Cov_{\rho}=\int_{\mathbb{R}^L}\left(u-\mathbb{E}_\rho\right)\otimes \left(u-\mathbb{E}_\rho\right)\rho(u)du\,,\quad \Cov_{\rho,g}=\int_{\mathbb{R}^L}\left(u-\mathbb{E}_\rho\right)\otimes \left(g(u)-\mathbb{E}_{g,\rho}\right)\rho(u)du\,.
\]
and $\Cov_{g,\rho}=\Cov^\top_{\rho,g}$.

Set $\Omega$ to be the sample space and $\mathcal{F}_0=\sigma\left(u^j(t=0),1\leq j\leq J\right)$, then the filtration introduced by \eqref{CAmunew} is:
\[
\mathcal{F}_t=\sigma\left(u^j(t=0),W^j_s,1\leq j\leq J,s\leq t\right)\,.
\]

The quantity we use to measure the distance between two probability measures is the Wasserstein distance:
\begin{definition} Let $\upsilon_1,\upsilon_2$ be two probability measures in $\left(\mathbb{R}^L,\mathcal{B}_{\mathbb{R}^L}\right)$, then the $W_2$-Wasserstein distance between $\upsilon_1,\upsilon_2$ is defined as
\begin{equation*}
W_2(\upsilon_1,\upsilon_2):=\left(\inf_{\gamma\in\Gamma(\upsilon_1,\upsilon_2)}\int_{\mathbb{R}^L\times\mathbb{R}^L}|x-y|^2d\gamma(x,y)\right)^{\frac{1}{2}},
\end{equation*}
where $\Gamma(\upsilon_1,\upsilon_2)$ denotes the collection of all measures on $\mathbb{R}^L\times\mathbb{R}^L$ with marginals $\upsilon_1$ and $\upsilon_2$ for $x$ and $y$ respectively. Here $\upsilon_i$ can be either general probability measures or the measures induced by probability density functions $\upsilon_i$.
\end{definition}

Now we present the main result.
\begin{theorem}[Main result]\label{thm:main}
Suppose $\mathcal{G}$ satisfies \eqref{linear}, let $\rho(t,u)$ solve~\eqref{FKPK} with initial data $\rho_0$ and $\{u^j_t\}$ solve~\eqref{CAmunew} with $u^j_0$ i.i.d. drawn from the distribution induced by $\rho_0$. Define $M_{u_T}(u)$ to be the ensemble distribution of $\{u^j_T\}$ as in~\eqref{eqn:empirical}. Assume\begin{equation}\label{thmcondition}
\lambda_{\min}(B)\lambda_{\min}\left(\Cov_{\rho(t)}\right)> 1,\quad \forall 0\leq t\leq T\,,
\end{equation}
then for any $0<\delta\ll 1$, there exists $T_\delta>0$ and $J_{T_\delta}>0$ so that
\[
\mathbb{E}(W_2(M_{u_{T_\delta}},\rho_\pos))\leq \delta\,.
\]
\end{theorem}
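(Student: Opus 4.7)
The plan is to prove Theorem~\ref{thm:main} by combining the two nontrivial ingredients already set up in the paper -- Theorem~\ref{thm:mean_field} (mean-field limit in $J$) and Theorem~\ref{thm:FKPK_conv} (long-time convergence of the Fokker-Planck flow in $t$) -- via a triangle-inequality decomposition of the $W_2$ error. Since $W_2$ is a genuine metric and $\rho(t)$ is deterministic, for every $T>0$,
\[
\mathbb{E}\bigl(W_2(M_{u_T},\rho_\pos)\bigr)\leq \mathbb{E}\bigl(W_2(M_{u_T},\rho(T))\bigr)+W_2(\rho(T),\rho_\pos),
\]
and the whole argument then reduces to driving each of the two terms on the right below $\delta/2$.

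The steps, in order, would be as follows. First, I would select $T_\delta$ using only the Fokker-Planck ingredient: by Theorem~\ref{thm:FKPK_conv}, under the hypothesis~\eqref{thmcondition}, the solution of~\eqref{FKPK} converges to $\rho_\pos$ in $W_2$ as $t\to\infty$ (at an exponential rate inherited from \cite{EnFL,carrillo2019wasserstein}), so there exists $T_\delta$ so large that $W_2(\rho(T_\delta),\rho_\pos)\leq \delta/2$. With $T_\delta$ now frozen, I would invoke Theorem~\ref{thm:mean_field}, whose content is that $\mathbb{E}\bigl[W_2(M_{u_t},\rho(t))\bigr]\to 0$ as $J\to\infty$ on any finite interval $[0,T]$ (in fact at the near-optimal rate $J^{-1/2+\epsilon}$, with a prefactor that may depend on $T$). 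Applied at the fixed time $T_\delta$, this produces a threshold $J_{T_\delta}$ such that $\mathbb{E}\bigl[W_2(M_{u_{T_\delta}},\rho(T_\delta))\bigr]\leq \delta/2$ for every $J\geq J_{T_\delta}$. Adding the two halves yields the desired bound $\mathbb{E}(W_2(M_{u_{T_\delta}},\rho_\pos))\leq\delta$.

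I do not anticipate any genuine obstacle at this stage -- all the hard analytical work is encapsulated in Theorems~\ref{thm:mean_field} and~\ref{thm:FKPK_conv} (wellposedness via Lyapunov bounds, uniform-in-$J$ moment estimates, and the bootstrap-plus-coupling argument behind the mean-field limit). The one point that deserves care is the order of quantifiers: $T_\delta$ must be chosen first, from the Fokker-Planck convergence alone, and $J_{T_\delta}$ is then allowed to depend on $T_\delta$. This dependence is essential, since the constant multiplying the $J^{-1/2+\epsilon}$ rate in Theorem~\ref{thm:mean_field} generally deteriorates (exponentially in $T$, via Gr\"onwall) with the length of the time horizon, so one cannot swap the order and hope for a $J$ that works uniformly for all $T$. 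One should also verify in passing that hypothesis~\eqref{thmcondition}, imposed on $[0,T_\delta]$, is exactly what Theorems~\ref{thm:mean_field} and~\ref{thm:FKPK_conv} require on that interval, so that both can be invoked at the chosen $T_\delta$ without further assumption.
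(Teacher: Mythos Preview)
Your proposal is correct and matches the paper's own proof essentially line for line: first pick $T_\delta$ via Theorem~\ref{thm:FKPK_conv} so that $W_2(\rho(T_\delta),\rho_\pos)\leq\delta/2$, then with $T_\delta$ fixed invoke Theorem~\ref{thm:mean_field} to find $J_{T_\delta}$ making $\mathbb{E}\,W_2(M_{u_{T_\delta}},\rho(T_\delta))\leq\delta/2$, and conclude by the triangle inequality. Your remark about the order of quantifiers is exactly the point the paper's argument relies on.
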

The introduction of the requirement~\eqref{thmcondition} is a technical one. Essentially it comes from the application of the Ando-Hemmen inequality that studies the differences between two matrices after square roots are taken, see details in the proof of Lemma~\ref{lem:bound_p}. This condition does not seem to be necessary, as long as some reasonable estimates can be found to control the square roots of matrices sensitivity that avoids the application of the Ando-Hemmen inequality. We leave the improvement to future research.

As stated above, this theorem is built upon the following three theorems: 1. Justifying the wellposedness of the SDE system~\eqref{CAmunew} using stochastic Lyapunov theory:
\begin{theorem}\label{ThforM}
Suppose $\mathcal{G}$ satisfies \eqref{linear}, if $\left\{u^j_0:\Omega\rightarrow\mathcal{X}\right\}^J_{j=1}$ is independent almost surely, then for all $t\geq0$, there exists a unique strong solution $(u^j_t)^J_{j=1}$ (up to $\mathbb{P}$-indistinguishability) of the set of coupled SDEs \eqref{CAmunew}.
\end{theorem}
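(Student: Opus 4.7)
The plan is to recast \eqref{CAmunew} as a single Itô SDE on $\mathbb{R}^{LJ}$ for the stacked state $U_t=(u^1_t,\ldots,u^J_t)$, and then apply the standard localization/Lyapunov machinery for SDEs with locally Lipschitz coefficients. In the linear setting $\nabla\Phi_R(u)=Bu-c$ with $B:=A^{\top}\Gamma^{-1}A+\Gamma_0^{-1}\succ 0$ and $c:=A^{\top}\Gamma^{-1}y+\Gamma_0^{-1}u_0$, so the drift is a polynomial in $U$, while the diffusion coefficient $\sqrt{2\Cov_{u_t}}$ (principal square root) is real-analytic in $U$ on the open cone $\mathcal{O}:=\{U:\Cov_u\succ 0\}$. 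On every compact subset of $\mathcal{O}$ both coefficients are Lipschitz, and classical Itô theory therefore gives a unique strong solution up to the first exit time $\tau_\infty$ from $\mathcal{O}$ or from a bounded ball in $\mathbb{R}^{LJ}$. Uniqueness up to $\MP$-indistinguishability then follows from pathwise uniqueness on each compact and the usual patching argument.

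To rule out magnitude explosion I would use the Lyapunov candidate $V_1(U)=\sum_{j=1}^{J}|u^j|^2$. A direct Itô computation gives
\begin{equation*}
\mathcal{L}V_1=-2\sum_{j=1}^{J}(u^j)^{\top}\Cov_u\bigl(Bu^j-c\bigr)+2J\,\mathrm{Tr}(\Cov_u).
\end{equation*}
Because $B,\Cov_u\succeq 0$, the quadratic part $-2\sum_j(u^j)^{\top}\Cov_u Bu^j$ is non-positive and in fact dissipative; the remaining cross and trace contributions are bounded by $C(1+V_1)$ via Cauchy--Schwarz together with the elementary estimate $\mathrm{Tr}(\Cov_u)\leq V_1/J$. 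Dynkin's formula and Grönwall then yield $\EE V_1(U_{t\wedge\tau_n})\leq(1+\EE V_1(U_0))e^{Ct}$, so the magnitude part of $\tau_\infty$ is $+\infty$ almost surely. Repeating the calculation with $V_1^{p}$ in place of $V_1$ delivers the uniform higher-moment bounds promised in the paper's outline and needed later for the mean-field analysis.

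The main obstacle is the second exit mechanism: preventing $\Cov_{u_t}$ from hitting $\partial\mathcal{O}$, where the principal square root ceases to be locally Lipschitz. To handle this I would augment $V_1$ with a second Lyapunov function $V_2(U):=-\log\det(\Cov_u)$, defined on $\mathcal{O}$, and compute $\mathcal{L}V_2$ by applying Itô's formula to the matrix-valued semimartingale $\Cov_{u_t}$ together with Jacobi's formula for $d\log\det$. The contracting drift $-\Cov_u B$ tends to shrink $\det\Cov_u$, but the quadratic variation of $\sqrt{2\Cov_u}\,dW^j$ contributes a strictly positive Itô correction that, once carefully bookkept, dominates the drift and produces an estimate of the form $\mathcal{L}V_2\leq C(1+V_1+V_2)$. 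Khasminskii's nonexplosion criterion applied to $V:=V_1+V_2$ then forces $\tau_\infty=+\infty$ a.s., provided $V_2(U_0)<\infty$ a.s.; the degenerate case $J\leq L$ is treated by first restricting to the (random) invariant affine subspace spanned by the initial ensemble, which the dynamics preserves since $\mathrm{Range}(\Cov_u)=\mathrm{span}\{u^j-\overline{u}\}$, and then running the same argument inside that subspace. The matrix Itô calculus needed to pin down the sign of $\mathcal{L}V_2$ is where I expect the real technical weight of the proof to lie.
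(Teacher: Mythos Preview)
Your high-level strategy---stack into one SDE on $\mathbb{R}^{LJ}$ and invoke a Lyapunov non-explosion criterion---is exactly what the paper does, but the Lyapunov function you choose is different, and it does not work. Your claim that $-2\sum_j (u^j)^\top \Cov_u B u^j\le 0$ is false. Splitting $u^j=e^j+\bar u$ with $e^j=u^j-\bar u$, that sum equals $-2J\,\mathrm{Tr}(\Cov_u B\Cov_u)-2J\,\bar u^\top \Cov_u B\,\bar u$; the first piece is nonpositive, but $\Cov_u B$ is not symmetric and $\bar u^\top \Cov_u B\,\bar u$ can be arbitrarily negative. For instance in $L=2$ take $B=\bigl(\begin{smallmatrix}1&a\\a&1\end{smallmatrix}\bigr)$ with $0<a<1$, $\Cov_u=\mathrm{diag}(t^2,0)$ and $\bar u=t(1,-3/a)$: then $\bar u^\top\Cov_u B\,\bar u=-2t^4$ while $\mathrm{Tr}(\Cov_u B\Cov_u)=t^4$, so the full drift contribution to $\mathcal{L}V_1$ is $+2Jt^4$, whereas $V_1=\sum_j|u^j|^2\sim t^2$. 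Hence $\mathcal{L}V_1\le C(1+V_1)$ fails, and no Cauchy--Schwarz patch on the ``remaining cross and trace contributions'' can save it. The paper sidesteps this by weighting the mean part with $B$: it takes
\[
V(U)=\frac1J\sum_j|u^j-\bar u|^2+|\bar u-u^\ast|_B^2\,,
\]
so that the two drift pieces become $-2\,\mathrm{Tr}(\Cov_u B\Cov_u)\le 0$ and $-2(\bar u-u^\ast)^\top B\Cov_u B(\bar u-u^\ast)\le 0$ (the latter because $B\Cov_u B\succeq 0$). With both drift terms nonpositive, only the Hessian/trace term survives and is trivially $\le CV$.

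Your worry about local Lipschitz failure of the principal square root on the degenerate set $\{\Cov_u\text{ singular}\}$ is a real subtlety that the paper does not discuss; its proof simply appeals to local Lipschitzness and the Lyapunov bound above. Your proposed barrier $-\log\det\Cov_u$, however, is only a sketch: you have not carried out the Itô computation for $\mathcal{L}V_2$, and the sign balance you hope for is exactly the hard step; moreover for $J\le L$ the covariance is \emph{always} singular, so the invariant-subspace reduction would have to be made precise before the barrier is even defined. A lighter observation is that $\Cov_u=EE^\top$ with $E=J^{-1/2}[u^1-\bar u,\ldots,u^J-\bar u]$, and $E$ is \emph{linear} in $U$; replacing $\sqrt{2\Cov_u}\,dW^j$ by $\sqrt{2}\,E\,d\tilde W^j$ (with $\tilde W^j\in\mathbb{R}^J$) yields an SDE with globally locally-Lipschitz coefficients and the same generator, for which the paper's Lyapunov function gives non-explosion without any barrier.
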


2. Showing the Fokker-Planck equation~\eqref{FKPK} is the mean-field limit of the SDE system~\eqref{Ecov_dis_revise}.
\begin{theorem}\label{thm:mean_field}
Under the same condition as in Theorem~\ref{thm:main}, for any $T>0$ and $0<\epsilon<1/2$, there exits $C$, depending on $T$ and $\epsilon$, but independent of $J$ such that
\[
\begin{aligned}
\mathbb{E}\left(W_2(M_{u},\rho(T,u))\right)\leq C
\left\{
\begin{aligned}
&J^{-1/2+\epsilon},\quad L\leq4\\
&J^{-2/L},\quad L>4\\
\end{aligned}
\right.\,.
\end{aligned}
\]
\end{theorem}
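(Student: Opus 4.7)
The plan is to use the coupling method in the style of Sznitman. I would first introduce an auxiliary McKean--Vlasov system driven by the \emph{same} Brownian motions as \eqref{CAmunew} but with the population covariance of $\rho(t,\cdot)$ replacing the empirical one:
\begin{equation*}
d\bar u^j_t = -\Cov_{\rho(t)}\nabla\Phi_R(\bar u^j_t)\,dt + \sqrt{2\Cov_{\rho(t)}}\,dW^j_t,\qquad \bar u^j_0 = u^j_0,
\end{equation*}
where $\rho(t,\cdot)$ solves \eqref{FKPK}. Because the $\bar u^j_0$ are i.i.d.\ with density $\rho_0$, each $\bar u^j_t$ is distributed exactly as $\rho(t,\cdot)$. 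By the triangle inequality,
\begin{equation*}
\EE[W_2(M_{u_T},\rho(T))] \leq \EE[W_2(M_{u_T},M_{\bar u_T})] + \EE[W_2(M_{\bar u_T},\rho(T))].
\end{equation*}
The second term is handled by the Fournier--Guillin empirical Wasserstein bound, which is exactly what produces the dichotomy $J^{-1/2+\epsilon}$ vs.\ $J^{-2/L}$ in the statement; applying it only needs uniform-in-$J$ high-moment control of $\bar u^j_T$, obtained from the same Lyapunov argument that underlies Theorem~\ref{ThforM}. For the first term I would use the synchronous coupling bound $W_2^2(M_{u_T},M_{\bar u_T}) \leq \frac{1}{J}\sum_j |u^j_T-\bar u^j_T|^2$, reducing the whole proof to showing $\EE|u^j_T - \bar u^j_T|^2 \leq C(T,\epsilon)\,J^{-1+2\epsilon}$.

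To obtain this coupling estimate I would apply It\^o to $|u^j_t-\bar u^j_t|^2$. The drift difference splits naturally as
\begin{equation*}
\Cov_{u_t}\nabla\Phi_R(u^j_t) - \Cov_{\rho(t)}\nabla\Phi_R(\bar u^j_t) = (\Cov_{u_t}-\Cov_{\rho(t)})\nabla\Phi_R(u^j_t) + \Cov_{\rho(t)}\bigl(\nabla\Phi_R(u^j_t)-\nabla\Phi_R(\bar u^j_t)\bigr),
\end{equation*}
and the quadratic variation picks up $\|\sqrt{\Cov_{u_t}}-\sqrt{\Cov_{\rho(t)}}\|^2$. The second drift piece is linear in $u^j_t-\bar u^j_t$ since $\Phi_R$ is quadratic, giving a clean Gr\"onwall contribution. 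The two matrix differences are handled by inserting an intermediate step through $\Cov_{\bar u_t}$: the fluctuation $\Cov_{\bar u_t}-\Cov_{\rho(t)}$ is a plain i.i.d.\ Monte Carlo error of expected size $J^{-1/2}$, while $\Cov_{u_t}-\Cov_{\bar u_t}$ is bounded pathwise by a product of $\tfrac{1}{J}\sum_j|u^j-\bar u^j|^2$ with norms of $u^j$ and $\bar u^j$. For the square-root version the Ando--Hemmen inequality is the natural tool, and this is exactly where the spectral condition \eqref{thmcondition} is used: it keeps the denominator $\lambda_{\min}(\sqrt{\Cov_{u_t}})+\lambda_{\min}(\sqrt{\Cov_{\rho(t)}})$ from collapsing, so that $\sqrt{\cdot}$ behaves in a Lipschitz manner along the trajectory.

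Feeding these bounds into It\^o and taking expectations yields a schematic estimate
\begin{equation*}
\frac{d}{dt}\EE|u^j_t-\bar u^j_t|^2 \leq C\,\EE|u^j_t-\bar u^j_t|^2 + C\,J^{-\alpha},
\end{equation*}
where $\alpha$ depends on how the H\"older inequality is used to absorb the $\nabla\Phi_R(u^j_t)$ factor and the $|u^j|,|\bar u^j|$ factors produced by the covariance decomposition. With everything placed in $L^2$ one obtains only a weak exponent, so I would then bootstrap: an initial crude bound on $\EE|u^j_t-\bar u^j_t|^2$ is inserted back into the covariance differences using H\"older with higher conjugate exponent $q>2$, where the tail factors are controlled by the uniform-in-$J$ moment bounds proven in Section~\ref{sec:SDE}. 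Iterating, $\alpha$ can be pushed arbitrarily close to $1$, i.e.\ to $1-2\epsilon$, after which Gr\"onwall on $[0,T]$ gives the desired $J^{-1+2\epsilon}$ rate.

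The hardest part of the argument is the treatment of $\sqrt{\Cov_{u_t}}-\sqrt{\Cov_{\rho(t)}}$. The matrix square root is not Lipschitz near singular matrices, so one must simultaneously keep the spectrum of $\Cov_{\rho(t)}$ bounded below (ensured by \eqref{thmcondition}) and guarantee that the bootstrap tightens the coupling error quickly enough that $\Cov_{u_t}$ stays in a region where Ando--Hemmen applies. This obstacle is precisely what distinguishes the EKS setting from the constant- or Lipschitz-diffusion mean-field limits of \cite{braun1977,Bolley_Carrillo,huang2018meanfield}, and is the reason a single application of Gr\"onwall is not enough; the $L^2$ bootstrap, powered by the independent-of-$J$ high-moment estimates from Section~\ref{sec:SDE}, is the mechanism that bridges the gap.
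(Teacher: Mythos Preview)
Your proposal is correct and essentially identical to the paper's route: introduce the auxiliary i.i.d.\ system driven by the same Brownian motions, split via the triangle inequality, apply Fournier--Guillin to one half and a synchronous-coupling $L^2$ bootstrap (fed by the uniform-in-$J$ moment bounds of Section~\ref{sec:SDE}) to the other. One refinement worth noting: condition~\eqref{thmcondition} is not used to keep Ando--Hemmen valid---only $\lambda_{\min}(\Cov_{\rho(t)})>0$ is needed there, since the empirical eigenvalue enters the denominator nonnegatively and can be dropped---but rather to ensure that the drift contribution $-\lambda_{\min}(B)\,\EE\|\Cov_{u_t}-\Cov_{v_t}\|_F^2$ dominates the diffusion contribution $+\lambda_0^{-1}\,\EE\|\Cov_{u_t}-\Cov_{v_t}\|_F^2$, so their sum is nonpositive and can be discarded before Gr\"onwall.
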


3. Showing the long time convergence to the equilibrium (comes from straightforward derivation from Proposition 3.3 of~\cite{carrillo2019wasserstein}).
\begin{theorem}\label{thm:FKPK_conv}
Let $\rho(t,u)$ solve~\eqref{FKPK} with initial density function $\rho_0\in \mathcal{C}^2$. Suppose
$W_2(\rho(0),\rho_{\text{pos}})<\infty$, then $W_2(\rho(t),\rho_{\text{pos}})$ converge to zero exponentially fast.
\end{theorem}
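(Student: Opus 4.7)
The plan is to recast \eqref{FKPK} as a Wasserstein-type gradient flow of the relative entropy $H(\cdot\mid\rho_{\pos})$ with a time-dependent tensorial mobility, combine a log-Sobolev inequality on the Gaussian target $\rho_{\pos}$ with the Talagrand $T_2$ transport inequality, and verify the one non-trivial hypothesis---a uniform positive lower bound on $\lambda_{\min}(\Cov_{\rho(t)})$---through a closed ODE exploiting the linear structure. This is essentially the scheme behind \cite[Prop.~3.3]{carrillo2019wasserstein}, which is why the authors describe the result as a straightforward derivation of that proposition.

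First I would rewrite \eqref{FKPK} as
\begin{equation*}
\partial_t\rho \;=\; \nabla\cdot\!\left(\rho\,\Cov_{\rho(t)}\nabla\log\tfrac{\rho}{\rho_{\pos}}\right),
\end{equation*}
using $\nabla\Phi_R=-\nabla\log\rho_{\pos}$ together with the fact that $\Cov_{\rho(t)}$ is independent of $u$, so $\mathrm{Tr}(\Cov_{\rho(t)}D^{2}\rho)=\nabla\cdot(\Cov_{\rho(t)}\nabla\rho)$. Testing against $\log(\rho/\rho_{\pos})$ and integrating by parts yields the dissipation identity
\begin{equation*}
\frac{d}{dt}H(\rho(t)\mid\rho_{\pos}) \;=\; -\int\rho\left|\Cov_{\rho(t)}^{1/2}\nabla\log\tfrac{\rho}{\rho_{\pos}}\right|^{2}du \;\leq\; -\lambda_{\min}(\Cov_{\rho(t)})\,I(\rho(t)\mid\rho_{\pos}),
\end{equation*}
where $I$ is the relative Fisher information. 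Since $D^{2}\Phi_R=B\succeq\lambda_{\min}(B)\mathrm{I}$, the Bakry--\'Emery criterion endows $\rho_{\pos}$ with the log-Sobolev inequality $I\geq 2\lambda_{\min}(B)H$, giving $\frac{d}{dt}H\leq-2\lambda_{\min}(B)\lambda_{\min}(\Cov_{\rho(t)})H$.

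The remaining ingredient, and the main obstacle, is a uniform positive lower bound on $\lambda_{\min}(\Cov_{\rho(t)})$. I would obtain it by deriving a closed ODE for the covariance: testing \eqref{FKPK} against $(u-m(t))\otimes(u-m(t))$ and using $\nabla\Phi_R=B(u-u^\ast)$, a short integration by parts gives
\begin{equation*}
\dot\Sigma(t)\;=\;2\Sigma(t)-2\Sigma(t)B\Sigma(t),\qquad\text{i.e.}\qquad\frac{d}{dt}\Sigma(t)^{-1}\;=\;2\bigl(B-\Sigma(t)^{-1}\bigr),
\end{equation*}
with explicit solution $\Sigma(t)^{-1}=B+\bigl(\Sigma(0)^{-1}-B\bigr)e^{-2t}$, where $\Sigma(t):=\Cov_{\rho(t)}$. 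Hence $\Sigma(t)\to B^{-1}$ exponentially and $\lambda_{\min}(\Sigma(t))\geq c_{0}>0$ uniformly in $t$ whenever $\Sigma(0)$ is non-degenerate; this step relies crucially on the linearity of $\mathcal{G}$, which is precisely why the result is confined to the linear setting. Gr\"onwall then yields $H(\rho(t)\mid\rho_{\pos})\leq e^{-\kappa t}H(\rho(0)\mid\rho_{\pos})$ for some explicit $\kappa>0$, and Talagrand's $T_2$ inequality for $\rho_{\pos}$ (another consequence of the log-Sobolev inequality) converts this into $W_2^{2}(\rho(t),\rho_{\pos})\leq\frac{2}{\lambda_{\min}(B)}e^{-\kappa t}H(\rho(0)\mid\rho_{\pos})$, establishing the claimed exponential decay. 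A minor technicality is that $W_2(\rho(0),\rho_{\pos})<\infty$ does not per se imply finite initial entropy, but the $\mathcal{C}^{2}$ regularity of $\rho_0$ combined with standard Fokker-Planck smoothing gives $H(\rho(\varepsilon)\mid\rho_{\pos})<\infty$ for any $\varepsilon>0$, from which the argument restarts at $t=\varepsilon$.
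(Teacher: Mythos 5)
Your argument is sound, but it follows a genuinely different route from the one the paper invokes. The paper supplies no proof of Theorem~\ref{thm:FKPK_conv}: it appeals to Proposition~3.3 of~\cite{carrillo2019wasserstein}, whose mechanism is not entropy dissipation but a coupling argument built around the closed moment ODEs~\eqref{eqn:mc}. Because the coefficients in~\eqref{eqn:vj} depend on $\rho(t)$ only through its first two moments, a general solution shares its drift and diffusion with the Gaussian solution that matches its initial mean and covariance; a synchronous coupling of the resulting \emph{linear} SDEs contracts $W_2$, and the explicit convergence $\mathsf{m}(t)\to u^\ast$, $\mathsf{C}(t)\to B^{-1}$ from~\eqref{eqn:c} finishes the job. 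That route never leaves the $W_2$ world, so the stated hypothesis $W_2(\rho_0,\rho_\pos)<\infty$ is exactly what it consumes. Your route---free-energy dissipation in the $\Cov_{\rho(t)}$-preconditioned Wasserstein geometry, Bakry--\'Emery for log-Sobolev on the Gaussian target, Otto--Villani for $T_2$---is the gradient-flow reading in the spirit of~\cite{EnFL} and would transfer more readily to non-Gaussian targets; the dissipation identity, the Riccati ODE for $\Sigma(t)^{-1}$ with the resulting uniform lower bound on $\lambda_{\min}(\Cov_{\rho(t)})$, and the LSI/$T_2$ conversion are all correct as written. What you owe in exchange is the passage from finite $W_2$ to finite relative entropy, and you undersell it by calling it minor: $\mathcal{C}^2$ regularity of $\rho_0$ says nothing about tails, $W_2(\rho_0,\rho_\pos)<\infty$ only gives a second moment, and the parabolic-smoothing argument you invoke to produce $H(\rho(\varepsilon)\mid\rho_\pos)<\infty$ needs a Gaussian upper bound on $\rho(\varepsilon)$, whose proof in turn needs a priori control of the time-dependent coefficient $\Cov_{\rho(t)}$---i.e.\ the very moment information the cited coupling proof is organized around. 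So your scheme is viable but requires one additional genuine lemma before it closes, whereas the coupling route the paper cites needs none.
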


The proof for the main result is a direct corollary of Theorem~\ref{thm:mean_field} and Theorem~\ref{thm:FKPK_conv}:
\begin{proof}[Proof of Theorem \ref{thm:main}]
For all $0<\delta\ll 1$, according to Theorem~\ref{thm:FKPK_conv}, there exists a time $T_\delta>0$ so that:
\[
W_2(\rho(T_\delta,u),\rho_\pos)\leq \delta/2\,.
\]
For this fixed $T_\delta$, pick any $\epsilon<1/2$, we apply Theorem~\ref{thm:mean_field}, there is a $J_{T_\delta,\epsilon}>0$, such that for any $J>J_{T_\delta,\epsilon}$
\[
\mathbb{E}\left(W_2\left(\rho(T_\delta,u),M_{u_{T_\delta}}\right)\right)\leq \delta/2\,.
\]
The statement of the theorem is immediate with the triangle inequality. In the statement we drop the $\epsilon$ dependence in $J_{T_\delta,\epsilon}$.
\end{proof}

We comment that Theorem~\ref{thm:mean_field} provides the convergence rate. It shows that we have the optimal convergence rate $J^{-1/2}$ in relatively low dimension when $L\leq 4$. In higher dimensional cases, the convergence rate depends on the dimensionality of $u$. We will see in Section~\ref{sec:mean_field} that this is the best possible rate one can get using the approach of the coupling method.

The later two sections are designated to show Theorem~\ref{ThforM} and Theorem~\ref{thm:mean_field}. In particular, we show the wellposedness of the SDE system in Section~\ref{sec:SDE}. We furthermore utilize the results to give some estimates to control the moments of the particle system. In Section~\ref{sec:mean_field}, we show the mean-field limit, Theorem~\ref{thm:mean_field}. We follow the classical trajectorial propagation of chaos approach by inventing a new SDE system, termed $\{v^j_t\}$, as a bridge to connect $\{u^j_t\}$ system and the PDE $\rho$. This section is subsequently divided into two subsections, in which we show the closeness of $\{v^j_t\}$ and $\rho$, and the closeness of the two SDE systems respectively. These two results are Proposition~\ref{prop:vj_FP} and Proposition~\ref{prop:vj_uj}.

\section{Wellposedness of Noisy Ensemble Kalman Flow}\label{sec:SDE}
In this section, we study the wellposedness of the SDE system~\eqref{CAmunew}. Considering each $u^j$ is a vector of $L$-length, we stack them up to have a coupled SDE:
\[
dU_t=F(U_t)dt+G(U_t)dW_t\,,
\]
where $U_t=\left(u^j_t\right)^J_{j=1}\in\RR^{LJ\times1}$, $W_t=\left(W^j_t\right)^J_{j=1}\in\RR^{LJ\times1}$ and
\begin{align*}
&F(U_t)=\left(-\mathrm{Cov}_{u_t}B\left(u^j_t-u^\ast\right)\right)^J_{j=1}\in\RR^{LJ\times1}\,,\\
&G(U_t)=\text{diag}\left(\sqrt{2\mathrm{Cov}_{u_t}}\right)^J_{j=1}\,,\\
\end{align*}
where $\mathrm{Cov}_{u}$ is the empirical covariance and $\text{diag}(D_j)^J_{j=1}$ is a diagonal block matrix with matrices $\left(D_j\right)^J_{j=1}$ on the diagonal and $B$ is defined in~\eqref{def:Bustar}.

We first show the wellposedness of the SDE system~\eqref{CAmunew} by following the standard Lyapunov theory.

\begin{proof}[Proof of Theorem \ref{ThforM}] According to the stochastic Lyapunov theory (See for example Theorem 4.1 \cite{StoBOOk}), strong solution exists if one finds local Lipschitz property of the drift $F$ and the diffusion $G$, namely we need to find a function $V\in C^2\left(\mathbb{R}^{LJ};\mathbb{R}_+\right)$ so that:
\begin{itemize}
\item there is a $c>0$ so that for all $U$:
\begin{equation}\label{WPcondition1}
LV(U):=\nabla V(U)\cdot F(U)+\frac{1}{2}\text{Tr}\left(G^\top(U)\text{Hess}[V](U)G(U)\right)\leq cV(U)\,,
\end{equation}
\item the function blows up at infinity:
\begin{equation}\label{WPcondition2}
\inf_{|U|>R}V(U)\rightarrow\infty\ \text{as}\ R\rightarrow\infty\,.
\end{equation}
\end{itemize}

We define the following Lyapunov function and will justify this function satisfy~\eqref{WPcondition1} and~\eqref{WPcondition2}:
\begin{equation}\label{eqn:3.2V}
V(U)=V_1(U)+V_2(U)=\frac{1}{J}\sum^J_{j=1}|u^j-\bar{u}|^2+|\bar{u}-u^*|^2_B =V_{1}+V_2\,.
\end{equation}
To justify~\eqref{WPcondition1}, we first notice that
\begin{equation}\label{eqn:3.2V1}
\begin{aligned}
&\nabla V_1(U)\cdot F(U)=-\frac{2}{J}\sum^J_{j=1}\left\langle u^j-\bar{u}, \mathrm{Cov}_{u}B(u^j-u^*)\right\rangle=-\frac{2}{J}\sum^J_{j=1}\left\langle u^j-\bar{u}, \mathrm{Cov}_{u}B(u^j-\bar{u})\right\rangle\leq 0\,,\\
&\nabla V_2(U)\cdot F(U)=-\frac{2}{J}\sum^J_{j=1}\left\langle B\left(\bar{u}-u^*\right), \mathrm{Cov}_{u}B(u^j-u^*)\right\rangle=-2\left\langle B\left(\bar{u}-u^*\right), \mathrm{Cov}_{u}B(\bar{u}-u^*)\right\rangle\leq 0\,,
\end{aligned}
\end{equation}
where we used the facts that $\Cov_u$ and $B$ are positive definite, and
\begin{equation}\label{eqn:3.2V2}
\begin{aligned}
&\frac{1}{2}\text{Tr}\left(G^\top(U)\text{Hess}[V_1](U)G(U)\right)=\sum^J_{j=1}\frac{2}{J}\left(1-\frac{1}{J}\right)\left(u^j-\bar{u}\right)^\top\left(u^j-\bar{u}\right)\leq 2V_1(U)\,,\\
&\frac{1}{2}\text{Tr}\left(G^\top(U)\text{Hess}[V_2](U)G(U)\right)=\sum^J_{j=1}\frac{2}{J^2}\left(u^j-\bar{u}\right)^\top B\left(u^j-\bar{u}\right)\leq 2\|B\|_2V_1(U)\,.
\end{aligned}
\end{equation}
Therefore we have
\[
LV(U)=\nabla V(U)\cdot F(U)+\frac{1}{2}\text{Tr}\left(G^\top(U)\text{Hess}[V](U)G(U)\right)\leq 2(1+\|B\|_2)V_1(U)\leq 2(1+\|B\|_2)V(U)\,.
\]
showing \eqref{WPcondition1}. To show \eqref{WPcondition2}, we run the argument of contradiction. Assume there exists $M>0$ and a sequence $\{U_n\}^\infty_{n=1}$ such that
\begin{equation}\label{WPContradic}
\lim_{n\rightarrow\infty} |U_n|=\infty,\quad V_1(U_n)+V_2(U_n)<M\,,
\end{equation}
then
\[
|u^j_n-\bar{u}_n|<\sqrt{MJ},\quad |u^*-\bar{u}_n|<\sqrt{M}\,,
\]
meaning:
\[
|U_n|=\left(\sum^J_{j=1}|u^j_n|^2\right)^{1/2}< \left(\sum^J_{j=1}\left(|u^*|+\sqrt{M}(\sqrt{J}+1)\right)^2\right)^{1/2}\,,
\]
contradicting~\eqref{WPContradic}.
\end{proof}

Preparing to prove the mean-field limit, we now move to show the boundedness of high moments of the SDE system under linear setting \eqref{linear}. In particular, we would like to show that at any finite time $T$, the high moments of $\{u^j_t\}^{J}_{j=1}$ is bounded:
\begin{proposition}\label{BforM}
Suppose $\mathcal{G}$ is linear \eqref{linear}, for the solution $(u^j_t)^J_{j=1}$ of \eqref{CAmunew}, if initial condition has finite higher moments, meaning there exists $M>0$ independent of $J$ such that
\[
\left(\mathbb{E}\left|u^j_0\right|^p\right)^{1/p}<M,\quad\forall 1\leq j\leq J\,
\]
for $p\geq2$, then the boundedness still holds true for any $t\geq0$ and $1\leq j\leq J$, namely:
\begin{itemize}
\item[1.]
\begin{equation}\label{eqn:moment_e}
\left(\mathbb{E}\left|u^j_t-\bar{u}_t\right|^p\right)^{1/p}\leq Ce^{Ct}\,,\quad\text{and}\quad \left(\mathbb{E}\left\|\mathrm{Cov}_{u_t}\right\|^p_2\right)^{1/p}\leq Ce^{Ct}\,,
\end{equation}
\item[2.]
\begin{equation}\label{highmomentu}
\left(\mathbb{E}|u^j_t|^p\right)^{1/p}\leq Ce^{Ce^{Ct}}\,,
\end{equation}
\end{itemize}
with $C>0$ is independent of $J,t$.
\end{proposition}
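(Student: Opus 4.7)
My plan is to attack the two assertions in sequence, exploiting the fact that the \emph{centered} dynamics for $e^j_t := u^j_t - \bar u_t$ enjoys a contracting drift that the Lyapunov computation already hinted at ($\mathbb{E}V_1(t) \leq Ce^{Ct}$ for $V_1 = \tfrac{1}{J}\sum_j|e^j|^2$). First I would upgrade that $L^2$ bound to all $p$-th moments, which simultaneously yields the covariance estimate via the pointwise inequality $\|\mathrm{Cov}_{u_t}\|_2 \leq \mathrm{Tr}(\mathrm{Cov}_{u_t}) = \tfrac{1}{J}\sum_k|e^k|^2$. Then I would feed this exponentially growing bound on $\|\mathrm{Cov}_{u_t}\|_2$ into the SDE for $\bar u_t$ as a random, time-dependent coefficient and run a Grönwall argument with a growing rate, producing the double-exponential bound on $\mathbb{E}|u^j_t|^p$.

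\textbf{Part 1 (centered variables and covariance).} Subtracting the SDE for $\bar u_t$ from \eqref{CAmunew}, the centered variables satisfy
\[
de^j_t = -\mathrm{Cov}_{u_t}B\,e^j_t\,dt + \sqrt{2\,\mathrm{Cov}_{u_t}}\,(dW^j_t - d\bar W_t).
\]
I would apply It\^o's formula to the $B$-weighted power $\phi(e) := \langle e,Be\rangle^{p/2}$, because then the drift contribution $-p|e^j|^{p-2}_B\langle Be^j,\mathrm{Cov}_{u_t}Be^j\rangle$ is automatically $\leq 0$ (both $\mathrm{Cov}_{u_t}$ and $B$ are positive semidefinite). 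The It\^o correction is bounded by $C(p,B)\,|e^j|^{p-2}\,\|\mathrm{Cov}_{u_t}\|_2$, and Jensen ($p\geq 2$) converts the trace into $(\mathrm{Tr}\,\mathrm{Cov}_{u_t})^{p/2}\leq\tfrac{1}{J}\sum_k|e^k|^p$. Applying Hölder with exponents $p/(p-2)$ and $p/2$ in expectation then closes the loop on $M_p(t):=\max_{1\leq j\leq J}\mathbb{E}|e^j_t|^p$ in the form $M_p'(t)\leq CM_p(t)$, and Grönwall yields $M_p(t)\leq Ce^{Ct}$. The covariance bound follows from the same Jensen step applied to $\|\mathrm{Cov}_{u_t}\|_2^p \leq \tfrac{1}{J}\sum_k|e^k|^{2p}$, combined with $M_{2p}(t)\leq Ce^{Ct}$.

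\textbf{Part 2 (uncentered moments).} Writing $u^j_t = e^j_t + \bar u_t$ and invoking Part 1, it suffices to control $\mathbb{E}|\bar u_t|^p$. The mean evolves as
\[
d\bar u_t = -\mathrm{Cov}_{u_t}B(\bar u_t - u^*)\,dt + \tfrac{1}{\sqrt J}\sqrt{2\,\mathrm{Cov}_{u_t}}\,d\tilde W_t,
\]
with $\tilde W_t$ a standard $\RR^L$-Brownian motion. Applying It\^o's formula to $|\bar u_t - u^*|^p$ and bounding the drift via Cauchy--Schwarz and Young gives
\[
\frac{d}{dt}\mathbb{E}|\bar u_t - u^*|^p \leq C\,\mathbb{E}\!\left[\|\mathrm{Cov}_{u_t}\|_2\bigl(|\bar u_t - u^*|^p + 1\bigr)\right].
\]
Feeding in $(\mathbb{E}\|\mathrm{Cov}_{u_t}\|_2^q)^{1/q} \leq Ce^{Ct}$ from Part 1 via a Hölder split turns this into $f_p'(t) \leq Ce^{Ct}(f_p(t)+1)$ with $f_p(t) := \mathbb{E}|\bar u_t - u^*|^p$. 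Integrating this Grönwall inequality against the exponentially-growing coefficient $e^{Ct}$ produces $f_p(t)\leq C\exp\!\bigl(C(e^{Ct}-1)\bigr)\leq Ce^{Ce^{Ct}}$, which is \eqref{highmomentu}.

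\textbf{Main obstacle.} The hard step is Part 1: $\mathrm{Cov}_{u_t}$ is a fully random, ensemble-dependent coefficient in both drift and diffusion, and I cannot treat it as an external input until Part 1 is done. The key saving observation is that in the linear setting the evolution of $e^j_t$ \emph{does not involve} $\bar u_t$ (the $\bar u$ contributions cancel in the subtraction), so the Grönwall loop can be closed self-consistently on the ensemble maximum $M_p(t)$ by aggregating Jensen's inequality over $j$. Once Part 1 is available, Part 2 is essentially a scalar Grönwall with an exponentially growing coefficient; the only subtlety there is the correct Hölder split that lets the $L^q$ control of $\|\mathrm{Cov}_{u_t}\|_2$ enter as an effective scalar rate, which is precisely what turns the single exponential of Part 1 into the double exponential of Part 2.
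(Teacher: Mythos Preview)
Your Part~1 is correct and is essentially the paper's own argument (Lemma~\ref{lem:bound_p1}): apply It\^o to the $B$-weighted power $\langle e^j,Be^j\rangle^{p/2}$, drop the nonpositive drift, and close the H\"older loop on the (exchangeable, hence identical) quantities $\mathbb{E}|e^j|^p$ using $\|\mathrm{Cov}_{u_t}\|_2\le\frac1J\sum_k|e^k|^2$.

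Part~2 contains a genuine gap. From
\[
\frac{d}{dt}\mathbb{E}|\bar u_t-u^*|^p\le C\,\mathbb{E}\bigl[\|\mathrm{Cov}_{u_t}\|_2(|\bar u_t-u^*|^p+1)\bigr]
\]
you claim a H\"older split yields $f_p'\le Ce^{Ct}(f_p+1)$. But H\"older gives
\[
\mathbb{E}\bigl[\|\mathrm{Cov}_{u_t}\|_2\,|\bar u_t-u^*|^p\bigr]\le\bigl(\mathbb{E}\|\mathrm{Cov}_{u_t}\|_2^q\bigr)^{1/q}\bigl(\mathbb{E}|\bar u_t-u^*|^{pq'}\bigr)^{1/q'},
\]
which returns $f_{pq'}$, not $f_p$. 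To land back on $f_p$ you would need $q'=1$, i.e.\ $L^\infty$ control on $\|\mathrm{Cov}_{u_t}\|_2$, which Part~1 does not provide. The fix is the same trick you already used in Part~1: apply It\^o to the $B$-weighted power $\langle\bar u_t-u^*,B(\bar u_t-u^*)\rangle^{p/2}$. Then the drift contribution $-p|\bar u_t-u^*|_B^{p-2}\langle B(\bar u_t-u^*),\mathrm{Cov}_{u_t}B(\bar u_t-u^*)\rangle$ is nonpositive, and only the It\^o correction survives; that correction carries $|\bar u_t-u^*|^{p-2}$ rather than $|\bar u_t-u^*|^p$, so H\"older with exponents $(p/(p-2),p/2)$ legitimately closes to $Ce^{Ct}f_p^{(p-2)/p}$. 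Integrating $f_p'\le Ce^{Ct}f_p^{(p-2)/p}$ then gives the claimed bound (in fact something tighter than the double exponential).

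For comparison, the paper's Part~2 works with the full ensemble quantity $g_p(t)=\frac1J\sum_j\mathbb{E}\langle\sqrt{B}u^j_t,\sqrt{B}u^j_t\rangle^p$ instead of isolating $\bar u_t$; the $B$-weighting again kills the dominant drift, and H\"older against the $e^j$ moments from Part~1 produces $g_p'\le Ce^{Ct}g_p^{\alpha}$ with $\alpha<1$. Your route via $\bar u_t$ is arguably cleaner once the gap is patched, since the diffusion picks up a helpful $1/J$ factor.
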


To show this proposition, we firstly define
\begin{equation}\label{EnsembC}
e^j_t=u^j_t-\bar{u}_t\,.
\end{equation}
Naturally
\[
\left(\mathbb{E}\left|u^j_t-\bar{u}_t\right|^p\right)^{1/p} = \left(\mathbb{E}\left|e^j_t\right|^p\right)^{1/p}\,,\quad\Cov_{u_t} = \Cov_{e_t}\,.
\]
The proof for the boundedness of high moment of $\{u^j_t\}$ now comes down to that for $e^j_t$, as will be shown in the following lemma.

\begin{lem}\label{lem:bound_p1}
Suppose $\mathcal{G}$ is linear \eqref{linear}, if initial $p$-th moment is finite, meaning there is a constant $M>0$ independent of $J$ so that
\begin{equation}\label{leminitial}
\left(\mathbb{E}\left|u^j_0\right|^p\right)^{1/p}<M,\quad\forall 1\leq j\leq J\,,
\end{equation}
for some $p\geq2$, then the boundedness also holds true for $\mathbb{E}|e^j_t|^p$, namely there is $C>0$ depending on $p$ only so that: for any $t\geq 0$ and $1\leq j\leq J$
\begin{equation*}
\left(\mathbb{E}|e^j_t|^p\right)^{1/p}<2(\kappa(B))^{1/2}M\exp(C t)\,,
\end{equation*}
where $B$ is defined in \eqref{def:Bustar} and $\kappa(B)=\|B\|_2/\lambda_{\min}(B)$ is the condition number of $B$ and $\lambda_{\min}$ means the smallest eigenvalue.
\end{lem}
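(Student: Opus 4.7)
The plan is to pass to the $B$-weighted variable $w^j_t := B^{1/2}e^j_t$ and close a Gr\"onwall inequality for its moments. First, subtracting the averaged equation from~\eqref{CAmunew} and using $\nabla\Phi_R(u) = B(u-u^\ast)$ along with $\sum_j e^j_t = 0$, I obtain
\begin{equation*}
de^j_t = -\Cov_{e_t} B\,e^j_t\,dt + \sqrt{2\Cov_{e_t}}\bigl(dW^j_t - \tfrac{1}{J}\textstyle\sum_{k=1}^J dW^k_t\bigr),
\end{equation*}
whose quadratic variation is $d\langle e^j\rangle_t = 2(1-1/J)\Cov_{e_t}\,dt$. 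The point of switching to $w^j_t$ is that the drift matrix $\Cov_{e_t} B$ is not symmetric, so $|e^j|^2$ is not a natural Lyapunov observable; after the change of variables, the drift becomes $-\tilde C_t w^j_t$ with $\tilde C_t := B^{1/2}\Cov_{e_t} B^{1/2}$ symmetric and positive semi-definite, the dissipation reads $\langle w^j,\tilde C_t w^j\rangle\ge 0$, and $d\langle w^j\rangle_t = 2(1-1/J)\tilde C_t\,dt$.

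Next I apply It\^o's formula to $|w^j_t|^p$ (which is $C^2$ on $\RR^L$ for $p\ge 2$), and absorb the local-martingale part by the standard stopping-time localization $\tau_N := \inf\{t : \max_j|w^j_t|\ge N\}$. This produces three drift contributions: the non-positive $-p|w^j|^{p-2}\langle w^j,\tilde C_t w^j\rangle$ together with the positive $p(1-\tfrac{1}{J})|w^j|^{p-2}\mathrm{Tr}(\tilde C_t)$ and $p(p-2)(1-\tfrac{1}{J})|w^j|^{p-4}\langle w^j,\tilde C_t w^j\rangle$. Dropping the non-positive term, using $\langle w^j,\tilde C_t w^j\rangle\le|w^j|^2\mathrm{Tr}(\tilde C_t)$, and identifying $\mathrm{Tr}(\tilde C_t) = \tfrac{1}{J}\sum_{k=1}^J|w^k_t|^2 =: V_2(t)$ gives
\begin{equation*}
\frac{d}{dt}\EE|w^j_t|^p \le p(p-1)\,\EE\bigl[|w^j_t|^{p-2} V_2(t)\bigr].
\end{equation*}

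To close this I combine H\"older with exponents $(p/(p-2),p/2)$ with the key observation that $(V_q(t))^{1/q} := \bigl(\tfrac{1}{J}\sum_k|w^k_t|^q\bigr)^{1/q}$ is the $\ell^q$-norm against the probability measure $\tfrac{1}{J}\sum_k\delta_k$ and hence non-decreasing in $q$; in particular $V_2 \le V_p^{2/p}$, so $\EE V_2^{p/2} \le \EE V_p \le M_p(t)$ where $M_p(t) := \max_j\EE|w^j_t|^p$. Chaining these bounds yields $\EE[|w^j|^{p-2}V_2]\le(\EE|w^j|^p)^{(p-2)/p} M_p^{2/p}\le M_p$, so $\tfrac{d}{dt}M_p(t)\le p(p-1)M_p(t)$ and Gr\"onwall delivers $M_p(t)\le M_p(0)\,e^{p(p-1)t}$. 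Converting back via $\lambda_{\min}(B)^{p/2}|e^j|^p\le|w^j|^p\le\|B\|_2^{p/2}|e^j|^p$ produces the factor $\kappa(B)^{1/2}$, while Minkowski applied to $e^j_0 = u^j_0-\bar u_0$ with hypothesis~\eqref{leminitial} gives $(\EE|e^j_0|^p)^{1/p}\le 2M$, completing the bound with $C=p-1$.

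The crux of the argument is the closure step. The It\^o drift naturally involves the mixed quantity $|w^j|^{p-2}V_2$, which is neither purely individual nor purely empirical, and only the power-mean inequality on the empirical probability measure lets me trade $V_2$ for a power of $V_p$ and shut the moment hierarchy at finite order without picking up any $J$-dependence. The preliminary rescaling to $w^j$ is equally essential, since it is what makes both positive It\^o contributions share the PSD matrix $\tilde C_t$ and therefore admit a common clean bound.
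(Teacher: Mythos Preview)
Your proof is correct and follows essentially the same approach as the paper: rescale to $w^j=B^{1/2}e^j$ so the drift becomes symmetric PSD, apply It\^o to $|w^j|^p$, discard the nonpositive dissipation, and close via a power-mean/H\"older inequality on the empirical measure before Gr\"onwall. The only cosmetic differences are that the paper tracks the averaged quantity $h_p(t)=\EE\bigl[\tfrac1J\sum_j|w^j|^{2p}\bigr]$ (restricting to even exponents) whereas you track $M_p(t)=\max_j\EE|w^j|^p$; by exchangeability of the particles these coincide, which is also what makes your step $\tfrac{d}{dt}M_p\le p(p-1)M_p$ legitimate.
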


\begin{proof}[Proof of Lemma \ref{lem:bound_p1}]
For convenience, we prove this Lemma for $2p$ with $p\geq 1$. We first define 
\[\te^j_t=\sqrt{B}e^j_t,\quad V_p(\te)=\frac{1}{J}\sum^J_{j=1}\left\langle \te^j_t,\te^j_t\right\rangle^p,
\]
and
\[
h_p(t)=\EE\left(\frac{1}{J}\sum^J_{j=1}\left\langle \te^j_t,\te^j_t\right\rangle^p\right) = \EE V_p\,.
\] 
Because $\lambda_{\min}(B)> 0$, it suffices to prove $h_p(t)$ is bounded.

First, at $t=0$, we have
\[
\begin{aligned}
h^{1/(2p)}_p(0)&=\EE\left(\frac{1}{J}\sum^J_{j=1}\left\langle \te^j_0,\te^j_0\right\rangle^p\right)^{\frac{1}{2p}}\leq \frac{1}{J}\sum^J_{j=1}\EE\left(|\te^j_0|^{2p}\right)^{\frac{1}{2p}}\leq \|B\|_2\EE\left(|e^1_0|^{2p}\right)^{\frac{1}{2p}}\\
&\leq \|B\|^{1/2}_2\left(\EE\left(|u^1_0|^{2p}\right)^{\frac{1}{2p}}+\EE\left(|\overline{u}_0|^{2p}\right)^{\frac{1}{2p}}\right)\leq 2\|B\|^{1/2}_2M\,,
\end{aligned}
\]
where we use triangle inequality in the first inequality, symmetry in the second inequality and \eqref{leminitial} in the last inequality.

According to It\^o's formula, it holds that
\[
dV_p(\te_t)=\sum^J_{j=1}\frac{\partial V_p(\te_t)}{\partial \te^j}d\te^j_t+\frac{1}{2}\sum^J_{i,j=1}\left(d\te^{i}_t\right)^\top\frac{\partial^2 V_p(\te_t)}{\partial \te^j\partial \te^{i}}d\te^j_t\,,
\]
which implies
\[
\begin{aligned}
dV_p(\te_t)=&-\frac{2p}{J}\sum^J_{j=1}\left\langle \te^j_t,\te^j_t\right\rangle^{p-1}\left\langle \te^j_t,\Cov_{\te_t}\te^j_t\right\rangle dt+\frac{2p}{J}\sum^J_{j=1}\left\langle \te^j_t,\te^j_t\right\rangle^{p-1}\left\langle \te^j_t,\sqrt{B}\sqrt{2\Cov_{e_t}}d\left(W^j_t-\overline{W}_t\right)\right\rangle \\
&+\frac{4(J-1)p(p-1)}{J^2}\sum^J_{j=1}\left\langle \te^j_t,\te^j_t\right\rangle^{p-2}\text{Tr}\left\{\left(\te^j_t\otimes \te^j_t\right)\Cov_{\te_t}\right\}dt+\frac{2(J-1)p}{J^2}\sum^J_{j=1}\left\langle \te^j_t,\te^j_t\right\rangle^{p-1}\text{Tr}\left\{\Cov_{\te_t}\right\}dt\,.
\end{aligned}
\]
Then taking the expectation and eliminate the nonpositive first term:
\begin{equation}\label{Vpinequality}
\begin{aligned}
h_p(t)-h_p(0)\leq& \frac{4(J-1)p(p-1)}{J^3}\int^t_0\sum^J_{j,k=1}\EE\left\langle \te^j_s,\te^j_s\right\rangle^{p-2}\left\langle \te^j_s,\te^k_s\right\rangle^2ds+\frac{2(J-1)p}{J^3}\int^t_0\sum^J_{j,k=1}\EE\left\langle \te^j_s,\te^j_s\right\rangle^{p-1}\left\langle \te^k_s,\te^k_s\right\rangle ds\\
\leq& \frac{4(J-1)p(p-1)}{J^3}\int^t_0\sum^J_{j,k=1}\EE\left\langle \te^j_s,\te^j_s\right\rangle^{p-2}\frac{\left\langle \te^j_s,\te^j_s\right\rangle^2+\left\langle \te^k_s,\te^k_s\right\rangle^2}{2}ds\\
&+\frac{2(J-1)p}{J^3}\int^t_0\sum^J_{j,k=1}\EE\left\langle \te^j_s,\te^j_s\right\rangle^{p-1}\left\langle \te^k_s,\te^k_s\right\rangle ds\\
\leq& \frac{2(J-1)p(p-1)}{J}\int^t_0h_p(s)ds+\frac{2(J-1)p(p-1)}{J^3}\int^t_0\sum^J_{j,k=1}\EE\left\langle \te^j_s,\te^j_s\right\rangle^{p-2}\left\langle \te^k_s,\te^k_s\right\rangle^2ds\\
&+\frac{2(J-1)p}{J^3}\int^t_0 \sum^J_{j,k=1}\EE\left\langle \te^j_s,\te^j_s\right\rangle^{p-1}\left\langle \te^k_s,\te^k_s\right\rangle ds\,.
\end{aligned}
\end{equation}
Using the H\"older's inequality we can control the second and third term, namely:
\begin{equation}\label{epexample}
\begin{aligned}
\sum^J_{j,k=1}\EE\left\langle \te^j_s,\te^j_s\right\rangle^{p-2}\left\langle \te^k_s,\te^k_s\right\rangle^2&=\EE\left[\sum^J_{j=1} \left\langle \te^j_s,\te^j_s\right\rangle^{p-2}\right]\left[\sum^J_{k=1} \left\langle \te^k_s,\te^k_s\right\rangle^{2}\right]\\
&\leq J\EE\left[\sum^J_{j=1} \left\langle \te^j_s,\te^j_s\right\rangle^{p}\right]^{(p-2)/p}\left[\sum^J_{k=1} \left\langle \te^k_s,\te^k_s\right\rangle^{p}\right]^{2/p}=J\EE\left[\sum^J_{j=1} \left\langle \te^j_s,\te^j_s\right\rangle^{p}\right]=J^2h_p(t)\,,
\end{aligned}
\end{equation}
and
\begin{equation}\label{epexample2}
\sum^J_{j,k=1}\EE\left\langle \te^j_s,\te^j_s\right\rangle^{p-1}\left\langle \te^k_s,\te^k_s\right\rangle=\EE\left[\sum^J_{j=1} \left\langle \te^j_s,\te^j_s\right\rangle^{p-1}\right]\left[\sum^J_{k=1} \left\langle \te^k_s,\te^k_s\right\rangle\right]\leq J\EE\left[\sum^J_{j=1} \left\langle \te^j_s,\te^j_s\right\rangle^{p}\right]=J^2h_p(t)\,.
\end{equation}
Plug \eqref{epexample}-\eqref{epexample2} into \eqref{Vpinequality}, we finally have $h_p(t)-h_p(0)\leq \frac{(J-1)p(2p-1)}{J}\int^t_0 h_p(s)ds$, which leads to the conclusion using the Gr\"onwall inequality:
\[
h_p(t)\leq h_p(0)e^{\frac{(J-1)p(2p-1)}{J}t}\leq (2M\|B\|^{1/2}_2)^{2p}e^{\frac{(J-1)p(2p-1)}{J}t}\,,
\]
to conclude.
\end{proof}

Now we show Proposition~\ref{BforM}.

\begin{proof}[Proof of Proposition \ref{BforM}]
The first inequality of equation~\eqref{eqn:moment_e} is already shown in Lemma~\ref{lem:bound_p1}. The second inequality is a direct consequence:
\[
\left(\mathbb{E}\left\|\Cov_{u_t}\right\|^p_2\right)^{1/p}\leq \frac{1}{J}\sum^J_{j=1}\mathbb{E}\left(\left\|(u^j_t-\overline{u}_t)\otimes(u^j_t-\overline{u}_t)\right\|^p_2\right)^{1/p}\leq \frac{1}{J}\sum^J_{j=1}\left(\mathbb{E}\left|u^j_t-\overline{u}_t\right|^{2p}\right)^{1/p}\leq Ce^{Ct}\,.
\]
To show~\eqref{highmomentu}, define:
\[
\tu^j_t=\sqrt{B}u^j_t,\quad \tu^*=\sqrt{B}u^*,\quad K_p(\tu)=\frac{1}{J}\sum^J_{j=1}\left\langle \tu^j_t,\tu^j_t\right\rangle^p\,,
\]
and
\[
g_p(t) = \EE\left(\frac{1}{J}\sum^J_{j=1}\left\langle \tu^j_t,\tu^j_t\right\rangle^p\right) = \EE\left(K_p(\tu_t)\right)\,.
\]
Then it's suffices to control the growth of $g(t)$ because $\lambda_{\min}(B)>0$. We first multiply $\sqrt{B}$ onto both sides of \eqref{Ecov_dis_revise} to obtain
\begin{equation}\label{CAmunew2}
d\tu^{j}_t=-\mathrm{Cov}_{\tu_t}(\tu^j_t-\tu^\ast)+\sqrt{B}\sqrt{2\mathrm{Cov}_{u_t}}dW^{j}_t\,.
\end{equation}
Using It\^{o}'s lemma to have:
\[
dK_p(\tu)=\sum^J_{j=1}\frac{\partial K_p(\tu)}{\partial \tu^j}d\tu^j+\frac{1}{2}\sum^J_{i,j=1}d\tu^{i}\frac{\partial^2 K_p(\tu)}{\partial \tu^{i}\partial \tu^j}d\tu^j\,,
\]
which implies
\[
\begin{aligned}
dK_p(\tu_t)=&-\frac{2p}{J}\sum^J_{j=1}\left\langle \tu^j_t,\tu^j_t\right\rangle^{p-1}\left\langle \tu^j_t,\mathrm{Cov}_{\tu_t}\left(\tu^j_t-\tu^*\right)\right\rangle dt+\frac{2p}{J}\sum^J_{j=1}\left\langle \tu^j_t,\tu^j_t\right\rangle^{p-1}\left\langle \tu^j_t,\sqrt{B}\sqrt{2\mathrm{Cov}_{u_t}}dW^j_t\right\rangle \\
&+\frac{4p(p-1)}{J}\sum^J_{j=1}\left\langle \tu^j_t,\tu^j_t\right\rangle^{p-2}\text{Tr}\left\{\left(\tu^j_t\otimes \tu^j_t\right)\mathrm{Cov}_{\tu_t}\right\}dt+\frac{2p}{J}\sum^J_{j=1}\left\langle \tu^j_t,\tu^j_t\right\rangle^{p-1}\text{Tr}\left\{\mathrm{Cov}_{\tu_t}\right\}dt\,.
\end{aligned}
\]
The expectation of the second term vanishes, and to control the first term, we note:
\[
\begin{aligned}
&-\frac{2p}{J}\sum^J_{j=1}\EE\left\langle \tu^j_t,\tu^j_t\right\rangle^{p-1}\left\langle \tu^j_t,\mathrm{Cov}_{\tu_t}\left(\tu^j_t-\tu^*\right)\right\rangle\\
=&-\frac{2p}{J}\sum^J_{j=1}\EE\left\langle \tu^j_t,\tu^j_t\right\rangle^{p-1}\left\langle \tu^j_t,\mathrm{Cov}_{\tu_t}\tu^j_t\right\rangle+\frac{2p}{J}\sum^J_{j=1}\EE\left\langle \tu^j_t,\tu^j_t\right\rangle^{p-1}\left\langle \tu^j_t,\mathrm{Cov}_{\tu_t}\tu^\ast\right\rangle\\
\leq&\,\frac{2p}{J^2}\sum^J_{j,k=1}\EE\left\{\left\langle \tu^j_t,\tu^j_t\right\rangle^{p-1}\left\langle \tu^j_t,\te^k_t\right\rangle\left\langle \te^k_t,\tu^*\right\rangle\right\}\\
\leq&\,\frac{2p}{J^2}\sum^J_{j,k=1}\EE\left\{\left\langle \tu^j_t,\tu^j_t\right\rangle^{p-1}\left|\tu^j_t\right|\left|\te^k_t\right|\left|\te^k_t\right|\left|\tu^*\right|\right\}
\\
=&\,2p\left|\tu^*\right|\EE\left\{\left(\frac{1}{J}\sum^J_{j=1}\left\langle \tu^j_t,\tu^j_t\right\rangle^{p-1/2}\right)\left(\frac{1}{J}\sum^J_{k=1}\left\langle \te^k_t,\te^k_t\right\rangle\right)\right\}\\
\leq&\,2p\left|\tu^*\right|\left(\frac{1}{J}\sum^J_{j=1}\EE\left\langle \tu^j_t,\tu^j_t\right\rangle^{p}\right)^{(p-1/2)/p}\left(\frac{1}{J}\sum^J_{k=1}\EE\left\langle \te^k_t,\te^k_t\right\rangle^{2p}\right)^{1/(2p)}\\
\leq&\,Ce^{Ct}\left(\frac{1}{J}\sum^J_{j=1}\EE\left\langle \tu^j_t,\tu^j_t\right\rangle^{p}\right)^{(p-1/2)/p}=Ce^{Ct}g^{(p-1/2)/p}_p(t)\,,
\end{aligned} 
\]
where the second last inequality comes from H\"older's inequality, and we used the estimate from Lemma~\ref{lem:bound_p1}. To control the third and fourth term, we have:
\[
\begin{aligned}
&\frac{4p(p-1)}{J}\sum^J_{j=1}\EE\left\langle \tu^j_t,\tu^j_t\right\rangle^{p-2}\text{Tr}\left\{\left(\tu^j_t\otimes \tu^j_t\right)\mathrm{Cov}_{\tu_t}\right\}\\
=&\frac{4p(p-1)}{J}\sum^J_{j=1}\EE\left\langle \tu^j_t,\tu^j_t\right\rangle^{p-2}\left(\frac{1}{J}\sum^J_{k=1}\left\langle \tu^j_t,\te^k_t\right\rangle^2\right)\\
\leq&  \frac{4p(p-1)}{J^2}\sum^J_{j,k=1}\EE\left\langle \tu^j_t,\tu^j_t\right\rangle^{p-1}\left\langle \te^k_t,\te^k_t\right\rangle\\
\leq&  4p(p-1)\left(\frac{1}{J}\sum^J_{k=1}\EE\left\langle \tu^j_t,\tu^j_t\right\rangle^{p}\right)^{(p-1)/p}\left(\frac{1}{J}\sum^J_{k=1}\EE\left\langle \te^k_t,\te^k_t\right\rangle^{p}\right)^{1/p}\\
\leq& Ce^{Ct}\left(\frac{1}{J}\sum^J_{k=1}\EE\left\langle \tu^j_t,\tu^j_t\right\rangle^{p}\right)^{(p-1)/p}=Ce^{Ct}g^{(p-1)/p}_p(t)\,,
\end{aligned} 
\]
and
\[
\begin{aligned}
\frac{2p}{J}\sum^J_{j=1}\EE\left\langle \tu^j_t,\tu^j_t\right\rangle^{p-1}\text{Tr}\left\{\mathrm{Cov}_{\tu_t}\right\}&\leq
\frac{2p}{J^2}\sum^J_{j,k=1}\EE\left\langle \tu^j_t,\tu^j_t\right\rangle^{p-1}\left\langle \te^k_t,\te^k_t\right\rangle
\\
&\leq 2p\left(\frac{1}{J}\sum^J_{k=1}\EE\left\langle \tu^j_t,\tu^j_t\right\rangle^{p}\right)^{(p-1)/p}\left(\frac{1}{J}\sum^J_{k=1}\EE\left\langle \te^k_t,\te^k_t\right\rangle^{p}\right)^{1/p}\\
&\leq Ce^{Ct}\left(\frac{1}{J}\sum^J_{k=1}\EE\left\langle \tu^j_t,\tu^j_t\right\rangle^{p}\right)^{(p-1)/p}=Ce^{Ct}g^{(p-1)/p}_p(t)\,.
\end{aligned} 
\]
In conclusion, we obtain
\[
\frac{dg}{dt}\leq Ce^{Ct}\left[g^{(p-1)/p}_p(t)+g^{(p-1/2)/p}_p(t)\right]\quad\Rightarrow\quad g_p(t)\leq g_p(t=0)Ce^{Ce^{Ct}}\,.
\]
\end{proof}

\section{Mean-field limit of ~\eqref{CAmunew}}\label{sec:mean_field}
In this section we show that the mean-field limit of~\eqref{CAmunew} is the Fokker-Planck equation~\eqref{FKPK}, and prove Theorem~\ref{thm:mean_field}. As discussed in the Introduction, the approach we utilize is the classical method termed the trajectorial propagation of chaos. With this approach: one builds a completely new SDE system according to the limiting PDE~\eqref{FKPK} by utilizing exactly the same coefficients, and compare the newly build SDE with the given SDE~\eqref{CAmunew}. Since the newly built SDE follows exactly the same flow as the PDE, its ensemble distribution is expected to be close to the PDE solution. And by inventing a new SDE system, it makes it easier analytically for the comparison.

More specifically, for the case studied in this paper, derived from~\eqref{FKPK}, we develop the SDE system $\{v^j_t\}$ that satisfy:
\begin{equation}\label{eqn:vj}
dv^j_t=-\Cov_{\rho(t)}\nabla \Phi_R(v^j_t)dt+\sqrt{2\Cov_{\rho(t)}}dW^j_t,\quad 1\leq j\leq J\,,
\end{equation}
with $v^j_0 = u^j_0$ drawn from the distribution induced by $\rho_0$. The corresponding ensemble distribution is:
\begin{equation}\label{eqn:ensemble_v}
M_{v_t}(u)=\frac{1}{J}\sum^{J}_{j=1}\delta_{v^{j}_t}(u)\,.
\end{equation}

In the following two subsections respectively, we first study the closeness of $\rho$ with $M_{v_t}$, and then compare the two SDE systems,~\eqref{CAmunew} v.s.~\eqref{eqn:vj} and justify the smallness between $M_{v_t}$ and $M_{u_t}$. The two results are stated in the following two Propositions respectively.
\begin{proposition}\label{prop:vj_FP}
Let $\rho$ is the solution to the Fokker-Planck equation~\eqref{FKPK}, and let $\{v^j\}$ solve \eqref{eqn:vj}, with initial data $\{v^j_{t=0}\}$ drawn i.i.d. from the distribution induced by $\rho_0$. Suppose $\rho_0$ is a $\mathcal{C}^2$ function and has finite higher moments, then for any $t>0$ and $0<\epsilon<1/2$, there exists a constant $C$, depending on $t$, dimension $L$ and $\epsilon$ but not on $J$ such that
\begin{equation}\label{eqn:vj_FP}
\mathbb{E}\left(W_2(M_{v_t},\rho(t,u))\right)\leq C
\left\{
\begin{aligned}
&J^{-1/2+\epsilon},\quad L\leq4\\
&J^{-2/L},\quad L>4
\end{aligned}
\right.\,.
\end{equation}
\end{proposition}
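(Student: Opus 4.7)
The proof rests on the crucial observation that, although~\eqref{eqn:vj} looks like an interacting particle system, it is in fact \emph{not} interacting: the drift $-\Cov_{\rho(t)}\nabla\Phi_R(v^j_t)$ and the diffusion matrix $\sqrt{2\Cov_{\rho(t)}}$ depend only on the deterministic Fokker--Planck solution $\rho(t)$, not on the other particles $\{v^k_t\}_{k\neq j}$. Consequently, since $\{v^j_0\}$ are i.i.d. from $\rho_0$ and the driving Brownian motions $\{W^j_t\}$ are independent, the system~\eqref{eqn:vj} decouples into $J$ independent SDEs with common time-dependent (but deterministic) coefficients. Thus the processes $\{v^j_t\}_{j=1}^J$ remain i.i.d. for every $t\geq 0$, and $M_{v_t}$ is literally the empirical measure of $J$ i.i.d. samples.

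The next step is to identify the common law of $v^j_t$ with $\rho(t,\cdot)$. Let $\tilde\rho(t,u)$ denote the density of $v^1_t$. It\^o's formula applied to a test function $\phi\in C_c^\infty(\mathbb{R}^L)$ together with Fubini shows that $\tilde\rho$ satisfies, in the weak sense,
\begin{equation*}
\partial_t\tilde\rho=\nabla\cdot\bigl(\tilde\rho\,\Cov_{\rho(t)}\nabla\Phi_R\bigr)+\mathrm{Tr}\bigl(\Cov_{\rho(t)}D^2\tilde\rho\bigr),\qquad \tilde\rho(0)=\rho_0.
\end{equation*}
Treating $t\mapsto\Cov_{\rho(t)}$ as prescribed time-dependent data, this is a \emph{linear} Fokker--Planck equation with smooth, uniformly positive definite diffusion matrix (on $[0,T]$, by the regularity of the PDE solution supplied by Theorem~\ref{thm:FKPK_conv} and the $\mathcal{C}^2$ assumption on $\rho_0$). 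The true solution $\rho(t)$ to~\eqref{FKPK} satisfies the \emph{same} linear equation with the \emph{same} initial data, so by uniqueness of the linear parabolic Cauchy problem $\tilde\rho(t,\cdot)=\rho(t,\cdot)$ for all $t\in[0,T]$.

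With these two steps in place, Proposition~\ref{prop:vj_FP} reduces to a classical quantitative law of large numbers in Wasserstein distance for the empirical measure of i.i.d. samples. I would invoke the Fournier--Guillin rates (or the dimension-dependent empirical-measure bounds in the literature), which produce precisely the $J^{-1/2+\epsilon}$ rate in low dimension ($L\leq 4$, the $\epsilon$ absorbing a logarithmic factor at the critical dimension $L=4$) and the $J^{-2/L}$ rate in high dimension ($L>4$). Applying this bound requires a moment hypothesis on $\rho(t)$ of order strictly greater than $2$, uniformly on $[0,T]$.

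The one nontrivial point is propagating moments from $\rho_0$ to $\rho(t)$ in a way that makes the constant $C$ depend only on $t$, $L$, $\epsilon$. To handle this I would multiply~\eqref{FKPK} by $|u|^{2p}$, integrate by parts, use the coercivity of $B$ in $\nabla\Phi_R(u)=B(u-u^\ast)$, and close a Gr\"onwall estimate using the already-established bound on $\|\Cov_{\rho(t)}\|$ on $[0,T]$. This is the only obstacle I anticipate; the rest of the argument is essentially the observation that~\eqref{eqn:vj} is a decoupled system, which converts an apparent mean-field question into a straightforward empirical-process estimate.
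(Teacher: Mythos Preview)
Your proposal is correct and follows essentially the same route as the paper: both recognize that the $\{v^j_t\}$ are i.i.d.\ with common law $\rho(t,\cdot)$, then invoke the Fournier--Guillin rates (the paper's Theorem~\ref{Fournier}) with $p=2$ and $q$ large, reducing everything to a moment bound on $\rho(t)$. The only cosmetic difference is in that moment bound: you propose a direct energy estimate on $\int|u|^{2p}\rho(t,u)\,du$ via Gr\"onwall, whereas the paper instead cites the explicit closed-form expression for $\Cov_{\rho(t)}$ from~\cite{EnFL,carrillo2019wasserstein} (Lemma~\ref{lem2:EnFL}) to bound the drift and diffusion coefficients of~\eqref{FKPK}, and then reads off finiteness of higher moments (Lemma~\ref{lem3:EnFL}).
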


\begin{proposition}\label{prop:vj_uj}
Let $\{u^j_t\}$ solve~\eqref{CAmunew} and $\{v^j_t\}$ solve~\eqref{eqn:vj}, with its coefficient defined by $\rho$, the solution to~\eqref{FKPK}. Suppose~\eqref{thmcondition} holds true, and $u^j_0=v^j_0$ are i.i.d. drawn from the distribution induced by $\rho_0$ ($\mathcal{C}^2$ and has finite high moments), then for any $0<\epsilon<1/2$, there exists a constant $C$ depending only on $L$, $T$ and $\epsilon$ such that
\begin{equation}\label{eqn:W2_estimate}
\mathbb{E}\left(W_2(M_{v_T},M_{u_T})\right)\leq\left(\frac{1}{J}\sum^J_{j=1}\mathbb{E}|u^j_T-v^j_T|^2\right)^{1/2}\leq C{J^{-1/2+\epsilon}}\,.
\end{equation}
\end{proposition}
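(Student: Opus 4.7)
The plan is to first reduce the Wasserstein bound to a pairwise $L^2$ estimate, then derive a Grönwall-type differential inequality for
\[
R_t := \frac{1}{J}\sum_{j=1}^{J}\EE\bigl|u^j_t-v^j_t\bigr|^2,
\]
and finally bootstrap this inequality up to the near-optimal rate $R_T\leq CJ^{-1+2\epsilon}$. The first inequality in~\eqref{eqn:W2_estimate} is immediate: $\gamma=\frac{1}{J}\sum_{j=1}^J\delta_{(u^j_T,v^j_T)}$ is a valid coupling of $M_{u_T}$ and $M_{v_T}$, so $W_2^2(M_{u_T},M_{v_T})\leq \frac{1}{J}\sum_j|u^j_T-v^j_T|^2$, and Jensen's inequality gives $\EE\,W_2(M_{u_T},M_{v_T})\leq R_T^{1/2}$.

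Setting $d^j_t:=u^j_t-v^j_t$ and subtracting~\eqref{eqn:vj} from~\eqref{CAmunew}, the linearity $\nabla\Phi_R(u)=B(u-u^\ast)$ lets me rewrite the drift of $d^j_t$ as $-\Cov_{u_t}Bd^j_t-(\Cov_{u_t}-\Cov_{\rho(t)})B(v^j_t-u^\ast)$. Applying It\^o to $|d^j_t|^2$, pairing the first piece with $d^j_t$ yields a nonpositive contribution that I discard, the second produces a cross term, and the diffusion yields $\mathrm{Tr}\bigl[(\sqrt{2\Cov_{u_t}}-\sqrt{2\Cov_{\rho(t)}})^2\bigr]$. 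Summing over $j$, dividing by $J$, and taking expectation gives
\[
\frac{dR_t}{dt}\leq \frac{2}{J}\sum_{j=1}^J \EE\bigl|\bigl\langle d^j_t,\,(\Cov_{u_t}-\Cov_{\rho(t)})B(v^j_t-u^\ast)\bigr\rangle\bigr|+2L\,\EE\bigl\|\sqrt{\Cov_{u_t}}-\sqrt{\Cov_{\rho(t)}}\bigr\|_2^2.
\]

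To control both right-hand terms, I split $\Cov_{u_t}-\Cov_{\rho(t)}=(\Cov_{u_t}-\Cov_{v_t})+(\Cov_{v_t}-\Cov_{\rho(t)})$. Since $\{v^j_t\}_{j=1}^J$ are i.i.d.\ with marginal law $\rho(t,\cdot)$ (independent initial data and Brownian motions, deterministic shared coefficients), the second piece is a plain empirical-covariance error that satisfies $\EE\|\Cov_{v_t}-\Cov_{\rho(t)}\|_2^2\leq C/J$ as soon as $\rho(t,\cdot)$ has bounded fourth moments (inherited from $\rho_0$). The first piece obeys a Cauchy--Schwarz estimate
\[
\|\Cov_{u_t}-\Cov_{v_t}\|_2\leq C\Bigl(\tfrac{1}{J}\textstyle\sum_j|d^j_t|^2\Bigr)^{1/2}\Bigl(\tfrac{1}{J}\textstyle\sum_j(|u^j_t-\bar u_t|^2+|v^j_t-\bar v_t|^2)\Bigr)^{1/2},
\]
whose second factor has all moments uniformly bounded in $J$ by Proposition~\ref{BforM} and its analogue for $\{v^j_t\}$. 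The diffusion term is handled via condition~\eqref{thmcondition}, which keeps $\lambda_{\min}(\Cov_{\rho(t)})$ bounded away from $0$ along the flow; the Ando--Hemmen inequality then gives $\|\sqrt{\Cov_{u_t}}-\sqrt{\Cov_{\rho(t)}}\|_2\leq C\|\Cov_{u_t}-\Cov_{\rho(t)}\|_2$, reducing the trace term to the matrix estimates already in hand.

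The closing step is a bootstrap. A first pass using only the coarse a priori bound $R_t\leq C$ (from the moment estimates) through the Grönwall-type inequality above yields some improved estimate $R_t\leq C_T J^{-\alpha_0}$ with $\alpha_0>0$. Applying Hölder's inequality with a small exponent on the $R_t$-type factor and a large exponent on the high-moment factors (which are $J$-independent thanks to Proposition~\ref{BforM}), and then reinserting the improved estimate into the differential inequality, yields $R_t\leq C_T J^{-\alpha_1}$ with $\alpha_1>\alpha_0$. Iterating saturates at $\alpha=1-2\epsilon$ for any preselected $0<\epsilon<1/2$. The main obstacles I expect are: (i) the diffusion coefficient difference, whose square-root sensitivity is generally worse than the underlying matrix difference, and where assumption~\eqref{thmcondition} combined with Ando--Hemmen is exactly what contains the loss; and (ii) engineering the Hölder splitting in the bootstrap so that each iteration produces a definite positive gain $\alpha_{k+1}-\alpha_k\geq c>0$ and the procedure converges all the way to $1-2\epsilon$ rather than stalling beforehand.
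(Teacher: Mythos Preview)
Your overall architecture---coupling, the split $\Cov_{u_t}-\Cov_{\rho(t)}=(\Cov_{u_t}-\Cov_{v_t})+(\Cov_{v_t}-\Cov_{\rho(t)})$, Ando--Hemmen, and a bootstrap on the exponent---matches the paper. But there is a genuine gap at the step where you write ``pairing the first piece with $d^j_t$ yields a nonpositive contribution that I discard.'' This discard is fatal for the bootstrap. After Ando--Hemmen the diffusion term contributes $\tfrac{1}{\lambda_0}\,\EE\|\Cov_{u_t}-\Cov_{v_t}\|_F^2$ with $\lambda_0=\lambda_{\min}(\Cov_{\rho(t)})$, and your Cauchy--Schwarz/H\"older estimate only gives $\EE\|\Cov_{u_t}-\Cov_{v_t}\|_F^2\leq C R_t^{1-\epsilon}$ (the quadratic piece $\EE\|\Cov_{p}\|_F^2$ is the culprit). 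You then face $\dot R\leq C R^{1-\epsilon}+CJ^{-1}$ with $R(0)=0$; the comparison ODE has maximal solution of order $t^{1/\epsilon}$ independently of $J$, so the ``first pass'' yields no $J$-decay and the iteration $\alpha\mapsto\alpha(1-\epsilon)$ actually moves backward. Condition~\eqref{thmcondition} is not merely there to keep $\lambda_0>0$ for Ando--Hemmen; its real role is to enable a cancellation that you have thrown away.

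The paper keeps the drift term and, working with the centered differences $p^j_t=d^j_t-\bar d_t$ (note $\Cov_{u_t}-\Cov_{v_t}$ depends only on $p^j$), rewrites the combined drift contributions via a trace identity so that a piece $-\lambda_{\min}(B)\,\EE\|\Cov_{u_t}-\Cov_{v_t}\|_F^2$ emerges explicitly. Under~\eqref{thmcondition} this absorbs the $\tfrac{1}{\lambda_0}\,\EE\|\Cov_{u_t}-\Cov_{v_t}\|_F^2$ coming from the diffusion, and the remaining drift pieces are shown to be \emph{linear} in $\EE|p^1_t|^2$ plus terms with explicit $J^{-1/2-\alpha/2+\epsilon}$ decay (Lemma~\ref{lem:bound_p}). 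Gr\"onwall then gives $\EE|p^1_t|^2\leq C J^{-1/2-\alpha/2+\epsilon}$, and only afterwards a second differential inequality for the uncentered $\EE|x^1_t|^2$ is closed using this improved covariance bound (Lemma~\ref{lem:bound_x}); the recursion $\alpha\mapsto \tfrac12+\tfrac{\alpha}{2}-\epsilon$ converges to $1-2\epsilon$. So two ingredients are missing from your plan: (i) retaining and algebraically reorganizing the negative drift to cancel the leading diffusion-trace term, and (ii) the two-stage argument that first controls the centered quantities $p^j_t$ before passing to $d^j_t$.
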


The proof for Theorem~\ref{thm:mean_field} is then natural:
\begin{proof}[Proof of Theorem~\ref{thm:mean_field}]
Considering~\eqref{eqn:vj_FP} and~\eqref{eqn:W2_estimate} and, by triangle inequality, one has: for any $0<\epsilon<1/2$
\[
\begin{aligned}
\mathbb{E}\left(W_2(M_{u},\rho(T,u))\right)&\leq \mathbb{E}\left(W_2(M_{u},M_{v})\right)+\mathbb{E}\left(W_2(M_{v},\rho(T,u))\right)\\
&\leq C
\left\{
\begin{aligned}
&J^{-1/2+\epsilon},\quad L\leq4\\
&J^{-2/L},\quad L>4\\
\end{aligned}
\right.\,.
\end{aligned}
\]
with $C$ independent of $J$. Setting this less than $\epsilon$ gives $J_\epsilon$ which concludes.
\end{proof}

\subsection{Comparing $\rho(t,u)$ and $M_{v_t}$}\label{sec:PDE}
In this section, we study the closeness of the limiting PDE~\eqref{FKPK} with its i.i.d. samples, the $\{v^j\}$ system. The goal is to prove Proposition~\ref{prop:vj_FP}.

To show this proposition, we first cite a classical result that states that the ensemble distribution of $i.i.d.$ samples approximates the original measure is indeed close:
\begin{theorem}[Theorem 1 in~\cite{Fournier2015}]\label{Fournier}
Let $\rho(u)$ be a probability density function on $\mathbb{R}^L$ and let $p>0$. Assume that
\begin{equation}\label{eqn:bound_moment_Fournier}
M_q(\rho):=\int_{\mathbb{R}^d}|x|^q\rho(dx)<\infty
\end{equation}
for some $q>p$. Consider an $i.i.d.$ sequence $(X_k)_{k\geq1}$ sampled from distribution induced by $\rho(u)$ and, for $J\geq1$, define the empirical measure
\[
\rho_J:=\frac{1}{J}\sum^J_{k=1}\delta_{X_k}.
\]
Then for all $J\geq1$ and $0<\epsilon\ll 1$, there exists a constant $C$ depending only on $p,L,q,\epsilon$ such that
\[
\mathbb{E}\left(W_p(\rho_J,\rho)\right)\leq CM^{p/q}_q(\rho)
\left\{
\begin{aligned}
&J^{-1/2+\epsilon}+J^{-(q-p)/q},\quad if\ p\geq L/2\ and\ q\neq 2p\\
&J^{-p/L}+J^{-(q-p)/q},\quad p\in(0,L/2),\quad if\ p\in(0,L/2)\ and\ q\neq L/(L-p)
\end{aligned}
\right..
\]
\end{theorem}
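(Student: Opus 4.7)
The plan is to follow a dyadic decomposition strategy in the spirit of Dereich--Scheutzow--Schottstedt and Fournier--Guillin, combining spatial truncation with a hierarchical coupling of $\rho_J$ to $\rho$.

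First I would truncate: fix a large radius $R>0$ (to be optimized later in $J$ and $M_q(\rho)$) and split $\mathbb{R}^L=B_R\cup B_R^c$. For $x\in B_R^c$ the transport cost is bounded by $|x|^p$, and Markov's inequality applied to $M_q(\rho)$ gives both $\rho(B_R^c)\lesssim M_q(\rho)R^{-q}$ and control on the fraction of samples $X_k$ landing outside $B_R$. This produces a tail contribution of order $M_q(\rho)R^{p-q}$, which after optimization in $R$ will be the source of the additive $J^{-(q-p)/q}$ term as well as of the prefactor $M_q^{p/q}(\rho)$.

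Second, inside $B_R$ I would introduce a nested sequence of dyadic partitions $\mathcal{P}_0\supset\mathcal{P}_1\supset\cdots$, where $\mathcal{P}_n$ consists of roughly $2^{nL}$ cubes of side-length $\sim R\,2^{-n}$. Coupling $\rho_J$ to $\rho$ inside each cube at the finest resolution and then propagating mass up the tree (matching children to parents so that mass discrepancies travel a distance of order $R\,2^{-n}$ at level $n$) yields the hierarchical bound
\[
W_p^p(\rho_J|_{B_R},\rho|_{B_R})\;\lesssim\;\sum_{n\geq 0}(R\,2^{-n})^p\sum_{C\in\mathcal{P}_n}\bigl|\rho_J(C)-\rho(C)\bigr|.
\]
For each cube $C$, $J\rho_J(C)$ is binomial with parameter $\rho(C)$, hence $\mathbb{E}|\rho_J(C)-\rho(C)|\leq\sqrt{\rho(C)/J}$; Cauchy--Schwarz over $C\in\mathcal{P}_n$ then gives $\sum_{C\in\mathcal{P}_n}\sqrt{\rho(C)/J}\leq\sqrt{|\mathcal{P}_n|/J}\lesssim 2^{nL/2}J^{-1/2}$. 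Substitution reduces everything to controlling the geometric series $\sum_n 2^{n(L/2-p)}$.

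Third, the two cases of the theorem reflect whether this series converges. When $p>L/2$ it converges geometrically and yields the $J^{-1/2}$ rate; the $\epsilon$-loss absorbs both the borderline case $p=L/2$ and a polylogarithmic factor from the maximum over cubes. When $p<L/2$ the series diverges, so I would cut off the recursion at depth $n^\ast$ with $2^{n^\ast L}\sim J$ (the scale at which each cube is expected to contain $O(1)$ samples) and use the crude estimate $|\rho_J(C)-\rho(C)|\leq\rho_J(C)+\rho(C)$ below $n^\ast$; optimization in $n^\ast$ produces $J^{-p/L}$. Jensen's inequality finally converts the $W_p^p$ estimate into the stated $W_p$ estimate, and optimizing $R$ balances the truncation tail against the inner-ball bound.

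The main obstacle is the hierarchical coupling in the second step: making the layer-cake inequality genuinely uniform in $n$ (so that only the explicit $\epsilon$-loss is incurred) requires a random tree matching that respects the dyadic hierarchy and not just cube-by-cube binomial concentration. Equally delicate are the borderline exponents $p=L/2$ and $q=L/(L-p)$, which are excluded from the statement because the geometric series produces a logarithm there; handling them cleanly would require replacing the fixed factor $2$ by a tunable scale parameter and absorbing the logarithm into $J^\epsilon$.
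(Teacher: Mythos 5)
The paper does not prove this statement: it is quoted verbatim as Theorem~1 of Fournier and Guillin~\cite{Fournier2015} and then invoked as a black box in the proof of Proposition~\ref{prop:vj_FP} with $p=2$ and $q$ large. There is therefore no internal proof for me to compare against; I can only assess your sketch against the source it cites.

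Your sketch faithfully reproduces the Fournier--Guillin / Dereich--Scheutzow--Schottstedt argument: the truncation at radius $R$ with Markov's inequality on $M_q(\rho)$ giving the $M_q^{p/q}(\rho)$ prefactor and the $J^{-(q-p)/q}$ tail term after optimizing $R$; the nested dyadic partition of $B_R$ with the hierarchical layer-cake bound $W_p^p\lesssim\sum_n(R2^{-n})^p\sum_{C\in\mathcal{P}_n}|\rho_J(C)-\rho(C)|$; the binomial plus Cauchy--Schwarz estimate $\sum_{C\in\mathcal{P}_n}\mathbb{E}|\rho_J(C)-\rho(C)|\lesssim 2^{nL/2}J^{-1/2}$; and the case split on whether $\sum_n 2^{n(L/2-p)}$ converges, with the depth cutoff $2^{n^\ast L}\sim J$ in the regime $p<L/2$. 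You also correctly flag the borderline exponents and the logarithmic loss absorbed into $J^{\epsilon}$. That is exactly how the cited theorem is proved.

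One genuine issue with the last step. Fournier and Guillin state their Theorem~1 for the quantity $\inf_\pi\int|x-y|^p\,d\pi$, i.e.\ for $W_p^p$ in the notation of the present paper (their $\mathcal{W}_p$ omits the $p$-th root), and the paper's transcription in terms of $W_p$ with the same exponents is nonstandard. Your concluding remark that Jensen's inequality ``converts the $W_p^p$ estimate into the stated $W_p$ estimate'' is not valid at the stated rate: from $\mathbb{E}(W_p^p)\lesssim J^{-1/2}$ Jensen only gives $\mathbb{E}(W_p)\lesssim J^{-1/(2p)}$, which for $p=2$ is $J^{-1/4}$, not $J^{-1/2+\epsilon}$. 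You should decide at the outset whether the target is the $W_p^p$ bound (the form actually proved in the source) or the $W_p$ bound, and carry the exponent through consistently rather than relying on Jensen at the end.
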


Our Proposition~\ref{prop:vj_FP} can be viewed as a direct corollary of this theorem if one can show the boundedness of the moment~\eqref{eqn:bound_moment_Fournier} for a large enough $q$ (setting $p=2$). This makes the second term in Theorem~\ref{Fournier} vanish and we get a simpler version as shown in~\eqref{eqn:vj_FP}. The rest of the subsection is dedicated to the boundedness of the moments.

We first cite results from~\cite{EnFL} and~\cite{carrillo2019wasserstein}:
\begin{lem}[Proposition 4 from~\cite{EnFL} and (2.2) from~\cite{carrillo2019wasserstein}]\label{lem2:EnFL}
Suppose $\mathcal{G}$ is linear \eqref{linear}, let $\rho(t,u)$ solve \eqref{FKPK} with initial density $\rho_0$ that is a $\mathcal{C}^2$ function and has finite second moments, then the mean $\mathsf{m}$ and the covariance $\mathsf{C}$ of the solution to~\eqref{FKPK} is governed by
\begin{equation}\label{eqn:mc}
\frac{d}{dt}\mathsf{m}(t)=-\mathsf{C}(t)(B\mathsf{m}(t)-r)\,,\quad \frac{d}{dt}\mathsf{C}(t)=-2\mathsf{C}(t)B\mathsf{C}(t)+2\mathsf{C}(t)\,.
\end{equation}
Furthermore, we have \begin{equation}\label{eqn:c}
\mathsf{C}(t)=\left((1-e^{-2\sigma t})B+e^{-2\sigma t}\mathsf{C}^{-1}(0)\right)^{-1}\,,
\end{equation}
where $B$ is defined in \eqref{def:Bustar} and $\mathsf{m}(t)\rightarrow \EE_{\rho_{\mathrm{pos}}}$, $\mathsf{C}(t)\rightarrow \mathrm{Cov}_{\rho_{\mathrm{pos}}}$ exponentially as $t\rightarrow\infty$ .
\end{lem}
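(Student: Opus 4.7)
The plan is to take moments of the Fokker--Planck equation~\eqref{FKPK}, exploiting the fact that under~\eqref{linear} the drift $\mathsf{C}(t)(Bu-r)$ with $r := A^\top\Gamma^{-1}y + \Gamma_0^{-1}u_0$ is affine in $u$ (and spatially constant in its matrix coefficient), so the mean--covariance hierarchy closes without any cumulant-closure approximation. The $\mathcal{C}^2$ and finite-second-moment assumptions on $\rho_0$, propagated by standard parabolic regularity, justify all integrations by parts below.

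First I would derive the evolution of $\mathsf{m}(t)$ by multiplying~\eqref{FKPK} by $u$ and integrating: the drift gives $-\mathsf{C}(t)(B\mathsf{m}(t)-r)$ and the diffusion vanishes since $D^2 u \equiv 0$. For the covariance, the cleanest path is to compute $\mathsf{Q}(t) := \int u\otimes u\,\rho\,du$ by multiplying~\eqref{FKPK} by $u\otimes u$ and integrating by parts twice. The drift contribution works out to $-\mathsf{C}B\mathsf{Q}-\mathsf{Q}B\mathsf{C}+\mathsf{C}r\mathsf{m}^\top+\mathsf{m}r^\top\mathsf{C}$, and the diffusion contribution is $2\mathsf{C}$, obtained from $\partial_i\partial_j(u_k u_\ell)=\delta_{ik}\delta_{j\ell}+\delta_{i\ell}\delta_{jk}$. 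Writing $\mathsf{C}=\mathsf{Q}-\mathsf{m}\otimes\mathsf{m}$ and subtracting $\mathsf{m}'\otimes\mathsf{m}+\mathsf{m}\otimes\mathsf{m}'$ (with $\mathsf{m}'$ from the previous step) produces the Riccati-type identity $\mathsf{C}'(t) = -2\mathsf{C}(t)B\mathsf{C}(t)+2\mathsf{C}(t)$, completing~\eqref{eqn:mc}.

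Next I would derive~\eqref{eqn:c} by inverting. Since $\mathsf{C}(0)$ is positive definite and positive definiteness is preserved along the Riccati flow (readable from the ODE itself, since at any eigenvalue-zero instant the right-hand side $2\mathsf{C}$ is nonnegative and the quadratic term is zero), $\mathsf{C}^{-1}(t)$ is well defined on $[0,\infty)$ and satisfies the linear, constant-coefficient matrix ODE $(\mathsf{C}^{-1})' = -\mathsf{C}^{-1}\mathsf{C}'\mathsf{C}^{-1}=2B-2\mathsf{C}^{-1}$. Its explicit solution is $\mathsf{C}^{-1}(t) = (1-e^{-2t})B+e^{-2t}\mathsf{C}^{-1}(0)$, which after inversion is~\eqref{eqn:c} (up to the rescaling constant $\sigma$ in the stated formula). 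Since $B=\mathrm{Cov}^{-1}_{\rho_{\mathrm{pos}}}$, this gives exponential convergence $\mathsf{C}(t)\to\mathrm{Cov}_{\rho_{\mathrm{pos}}}$ at rate controlled by $e^{-2t}$.

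Finally, setting $\widetilde{\mathsf{m}}(t) := \mathsf{m}(t)-u^\ast$ and using $r=Bu^\ast$ converts the $\mathsf{m}$-equation into $\widetilde{\mathsf{m}}'(t) = -\mathsf{C}(t)B\,\widetilde{\mathsf{m}}(t)$. The explicit formula furnishes a uniform positive lower bound on the eigenvalues of the symmetrized operator $B^{1/2}\mathsf{C}(t)B^{1/2}$ on $[0,\infty)$, so a Gr\"onwall estimate on the Lyapunov quantity $\langle\widetilde{\mathsf{m}},B\widetilde{\mathsf{m}}\rangle$ gives exponential decay of $\widetilde{\mathsf{m}}$, i.e.\ $\mathsf{m}(t)\to u^\ast = \mathbb{E}_{\rho_{\mathrm{pos}}}$ exponentially. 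The only real subtlety in the whole argument is the a priori positive definiteness of $\mathsf{C}(t)$ on all of $[0,\infty)$, which legitimizes the inversion step and produces the spectral lower bound used in the Gr\"onwall argument; once that is secured, every remaining manipulation is routine linear algebra.
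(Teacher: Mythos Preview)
The paper does not actually prove this lemma: it is stated as a citation of Proposition~4 in~\cite{EnFL} and equation~(2.2) in~\cite{carrillo2019wasserstein}, with no accompanying argument. Your derivation is the standard one and is correct. Taking moments of~\eqref{FKPK} closes the hierarchy exactly because the drift is affine in $u$ with spatially constant matrix coefficient $\mathsf{C}(t)B$; the Riccati equation for $\mathsf{C}$ then linearizes under inversion to the constant-coefficient ODE $(\mathsf{C}^{-1})'=2B-2\mathsf{C}^{-1}$, whose explicit solution gives~\eqref{eqn:c} with $\sigma=1$ (the symbol $\sigma$ in the displayed formula is not defined elsewhere in the paper and should be read as $1$, as you correctly note). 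Your treatment of the mean via the Lyapunov quantity $\langle \widetilde{\mathsf m},B\widetilde{\mathsf m}\rangle$, using the uniform spectral lower bound on $B^{1/2}\mathsf{C}(t)B^{1/2}$ extracted from the explicit formula, is exactly how the cited references proceed. The positive-definiteness check you flag as the only subtlety is indeed the one nontrivial point, and your continuity argument (the quadratic term vanishes and the linear term is nonnegative at any degeneration instant) is sufficient for the qualitative claim; alternatively one can simply read it off the explicit formula for $\mathsf{C}^{-1}(t)$ as a convex combination of two positive-definite matrices.
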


Then, since the covariance of solution to the PDE is known, we can easily obtain upper bounds for higher moments:
\begin{lem}\label{lem3:EnFL}
If $\rho_0\in\mathcal{C}^2$ and has finite high moments, then for any $p\geq2$, $t>0$, there exists a constant $C$ depending on $p$ and $t$ such that
\begin{equation}\label{eqn:covbound}
\int |u|^p\rho(t,u)du\leq C(p,t)<\infty,\quad\text{and}\quad\|\Cov_{\rho(t)}\|^p_2\leq C(p,t)<\infty\,,
\end{equation}
\end{lem}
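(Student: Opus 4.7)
The proof splits into two independent parts corresponding to the two inequalities in~\eqref{eqn:covbound}, and in both cases the key fact to exploit is that, in the linear setting, the covariance $\Cov_{\rho(t)}$ is a deterministic function of $t$ that is explicitly known from Lemma~\ref{lem2:EnFL}.

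The covariance bound is the easy half. The explicit formula~\eqref{eqn:c} expresses $\Cov_{\rho(t)}$ as the inverse of $(1-e^{-2\sigma t})B+e^{-2\sigma t}\mathsf{C}^{-1}(0)$, a convex combination of two symmetric positive-definite matrices whose smallest eigenvalue is bounded below by $\min\{\lambda_{\min}(B),\lambda_{\min}(\mathsf{C}^{-1}(0))\}>0$ uniformly in $t\geq 0$. Inverting gives
\begin{equation*}
\|\Cov_{\rho(t)}\|_2 \leq \frac{1}{\min\{\lambda_{\min}(B),\lambda_{\min}(\mathsf{C}^{-1}(0))\}}\qquad\forall t\geq 0,
\end{equation*}
so raising to the $p$-th power yields the desired bound for every $p\geq 2$.

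For the moment bound I would use the stochastic representation: since $v^j_t$ solving~\eqref{eqn:vj} has marginal law $\rho(t,\cdot)$, one has $\int|u|^{p}\rho(t,u)du=\EE|v^j_t|^{p}$, and it is enough to estimate the right-hand side. By the previous step, $\Cov_{\rho(t)}$ is a bounded deterministic matrix-valued function of $t$, so~\eqref{eqn:vj} is in fact a linear time-inhomogeneous SDE. Setting $w^j_t=v^j_t-u^*$ so that $dw^j_t=-\Cov_{\rho(t)}Bw^j_t\,dt+\sqrt{2\Cov_{\rho(t)}}\,dW^j_t$, I would apply It\^o's formula to the Lyapunov functional $f(w)=\langle w,Bw\rangle^{p}$, paralleling the computation already carried out in the proofs of Lemma~\ref{lem:bound_p1} and Proposition~\ref{BforM}. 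The drift term is non-positive because it equals $-2p\langle w,Bw\rangle^{p-1}\langle Bw,\Cov_{\rho(t)}Bw\rangle\leq 0$, and the It\^o correction is bounded pointwise by $C(p)\|\Cov_{\rho(t)}\|_2\|B\|_2\langle w,Bw\rangle^{p-1}$. Taking expectations and invoking Young's inequality,
\begin{equation*}
\frac{d}{dt}\EE\langle w^j_t,Bw^j_t\rangle^{p}\leq C(p,t)\bigl(1+\EE\langle w^j_t,Bw^j_t\rangle^{p}\bigr),
\end{equation*}
with $C(p,t)$ locally bounded by the covariance estimate above. Gr\"onwall's inequality then gives a finite bound on $[0,t]$, and the bound on $\int|u|^{2p}\rho(t,u)du$ follows via $\EE|v^j_t|^{2p}\leq 2^{2p-1}\bigl(|u^*|^{2p}+(\lambda_{\min}(B))^{-p}\EE\langle w^j_t,Bw^j_t\rangle^{p}\bigr)$; odd exponents are handled by Jensen.

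I do not anticipate any substantive obstacle, which is why the statement is recorded as a lemma rather than a theorem. The argument is essentially a simplification of the particle-level estimates in Section~\ref{sec:SDE}: because $\Cov_{\rho(t)}$ is deterministic and explicitly bounded, both the coupling between particles and the need for $J$-independent constants disappear. One could equally well obtain the same inequality by testing~\eqref{FKPK} against $|u|^{2p}$ directly and integrating by parts; the SDE route is preferable only because the It\^o machinery has already been set up for the $u^j$ and $\tilde u^j$ systems earlier in the paper.
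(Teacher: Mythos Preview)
Your proof is correct and follows the same skeleton as the paper's: first use the explicit formula~\eqref{eqn:c} from Lemma~\ref{lem2:EnFL} to bound $\|\Cov_{\rho(t)}\|_2$ uniformly in $t$, and then conclude that moments propagate because the Fokker--Planck equation has linear (hence Lipschitz) drift and bounded diffusion. The only difference is in how the second step is executed: the paper works directly at the PDE level, simply noting that $\|\Cov_{\rho(t)}\nabla_u\Phi_R(u)\|_2\leq 2\|B\|_2 M|u-u^\ast|$ so that~\eqref{FKPK} has Lipschitz transport and bounded second-order coefficient, whence finiteness of all moments is a standard fact; you instead pass to the stochastic representation $v^j_t$ and run the It\^o/Gr\"onwall computation on $\langle w,Bw\rangle^{p}$. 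As you yourself observe in the final paragraph, these are two renderings of the same estimate, and neither introduces circularity with Proposition~\ref{Bforv} since the identification $\mathrm{law}(v^j_t)=\rho(t,\cdot)$ is the classical SDE/Fokker--Planck correspondence for a linear SDE with bounded deterministic coefficients.
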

\begin{proof}
According to Lemma \ref{lem2:EnFL} \eqref{eqn:c}, the covariance of $\rho(t,u)$ is uniformly bound, namely:
\[
\|\Cov_{\rho(t)}\|_F\leq M,\quad \forall t>0\,,
\]
for a constant $M$ independent of $t$. This means the transport coefficient of \eqref{FKPK} is Lipschitz and Hessian coefficient of \eqref{FKPK} is uniformly bounded, considering the formula in~\eqref{PhiRdef}:
\[
\|\Cov_{\rho(t)}\nabla_u\Phi_R(u)\|_2=\|2\Cov_{\rho(t)}\left[B(u-u^*)\right]\|_2=2\|\Cov_{\rho(t)}B\|_2|u-u^\ast|\leq 2\|B\|_2M|u-u^\ast|\,,
\]
for all $t>0$ and this implies, using~\eqref{FKPK}, that high moments of $\rho(t)$ are also finite for any time $t<\infty$.
\end{proof}

Naturally one can prove Proposition~\ref{prop:vj_FP}
\begin{proof}[Proof of Proposition \ref{prop:vj_FP}]
Since~\eqref{eqn:bound_moment_Fournier} holds true according to \eqref{highmomentv}, we conclude the proof by choosing $p=2$ and $q$ large enough in Theorem \ref{Fournier}.
\end{proof}

For later convenience we also provide the boundedness of the moments for $M_{v_t}$.

\begin{proposition}\label{Bforv}
Let $\rho$ solve~\eqref{FKPK} with the initial data $\rho_0\in\mathcal{C}^2$ and has finite high moments, and let $v^j_t$ solve the SDE system~\eqref{eqn:vj}, then for any $J$, the bound holds true for all finite time $t$, namely there is $C>0$ depending on $p,M,t$ so that for all $1\leq j\leq J$:
\begin{equation}\label{highmomentv}
\left(\mathbb{E}|v^j_t|^p\right)^{1/p}\leq C\,,\quad \left(\mathbb{E}\left\|\mathrm{Cov}_v(t)\right\|^p_2\right)^{1/p}\leq C\,,\quad \left(\mathbb{E}\left|v^j_t-\bar{v}_t\right|^p\right)^{1/p}\,\leq C\,.
\end{equation}
Furthermore we have 
\begin{equation}\label{diffforconv}
\left(\mathbb{E}\left\|\Cov_{v_t}-\Cov_{\rho(t)}\right\|^p_2\right)^{1/p}\leq CJ^{-1/2}\,.
\end{equation}
and
\begin{equation}\label{highmomentv4}
\begin{aligned}
\left(\mathbb{E}\left\|\overline{v}-\EE_{\rho(t)}\right\|^p_2\right)^{1/p}\leq CJ^{-1/2}\,,\quad \left(\mathbb{E}\left\|\frac{1}{J}\sum^J_{j=1}|q^j|^2-\mathrm{Var}_{\rho(t)}\right\|^p_2\right)^{1/p}\leq CJ^{-1/2}\,,
\end{aligned}
\end{equation}
where $\mathrm{Var}_{\rho(t)}=\mathrm{Tr}\left(\Cov_{\rho(t)}\right)$.
\end{proposition}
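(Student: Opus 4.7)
The key structural observation is that the SDE system~\eqref{eqn:vj} is decoupled across $j$: the drift and diffusion both involve $\Cov_{\rho(t)}$, which is a \emph{deterministic} function of time (the covariance of the Fokker--Planck solution $\rho$), rather than an empirical covariance of the particles themselves. Since the initial data $\{v^j_0\}$ are drawn i.i.d.\ from $\rho_0$ and the Brownian motions $\{W^j\}$ are independent, the trajectories $\{v^j_t\}_{j=1}^J$ are i.i.d.\ across $j$. Moreover, the Kolmogorov forward equation for a single $v^j$ driven by~\eqref{eqn:vj} with the given deterministic coefficient $\Cov_{\rho(t)}$ coincides exactly with~\eqref{FKPK}, so uniqueness of~\eqref{FKPK} yields $v^j_t \sim \rho(t)$ for every $j$ and every $t$. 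All four bounds in the proposition then reduce to routine moment and concentration estimates for i.i.d.\ samples drawn from $\rho(t)$.

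Given this observation, the bounds in~\eqref{highmomentv} are almost immediate from Lemma~\ref{lem3:EnFL}: $\EE|v^j_t|^p = \int|u|^p\rho(t,u)\,du \leq C$, so $\EE|\bar v_t|^p$ is controlled by Jensen and $\EE|v^j_t-\bar v_t|^p$ by the triangle inequality. The empirical covariance bound follows from $\|\Cov_{v_t}\|_2 \leq \tfrac{1}{J}\sum_j|v^j_t-\bar v_t|^2$ combined with these moment bounds.

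The remaining estimates~\eqref{diffforconv} and~\eqref{highmomentv4} are quantitative laws of large numbers for i.i.d.\ samples with all moments finite, which I would prove by a Marcinkiewicz--Zygmund (or Rosenthal) inequality applied componentwise. For the sample mean, $\bar v_t - \EE_{\rho(t)} = \tfrac{1}{J}\sum_j (v^j_t-\EE_{\rho(t)})$ is a centered i.i.d.\ sum with bounded $p$-th moments, giving the $L^p$ bound $O(J^{-1/2})$. The same argument applied to the scalar centered sum $\tfrac{1}{J}\sum_j|q^j|^2 - \mathrm{Var}_{\rho(t)}$ yields the second half of~\eqref{highmomentv4}. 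For~\eqref{diffforconv}, I would use the identity
\[
\Cov_{v_t} - \Cov_{\rho(t)} = \tfrac{1}{J}\sum_{j=1}^J\bigl[(v^j_t-\EE_{\rho(t)})\otimes(v^j_t-\EE_{\rho(t)}) - \Cov_{\rho(t)}\bigr] - (\bar v_t-\EE_{\rho(t)})\otimes(\bar v_t-\EE_{\rho(t)}),
\]
so the first term is a centered i.i.d.\ sum of matrix-valued random variables (with bounded higher moments) contributing $O(J^{-1/2})$, while the second is rank-one of order $O(J^{-1})$ from the sample-mean bound just established.

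The main technical wrinkle is simply accounting for matrix (rather than scalar) norms in~\eqref{diffforconv}, but because all higher moments of $v^j_t$ are already under control from step two, this is bookkeeping rather than a genuine obstacle; crucially, no coupling argument is needed since the $v^j_t$ are genuinely independent, unlike the $u^j_t$ that will be treated in Proposition~\ref{prop:vj_uj}.
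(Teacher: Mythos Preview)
Your proposal is correct and follows essentially the same route as the paper: the paper likewise observes that the bounds in~\eqref{highmomentv} are immediate from Lemma~\ref{lem3:EnFL}, then proves~\eqref{diffforconv} by the same decomposition of $\Cov_{v_t}-\Cov_{\rho(t)}$ into a centered i.i.d.\ sum plus a rank-one term, bounding the i.i.d.\ sum componentwise via a Rosenthal-type moment inequality (the paper cites~\cite{johnson1985}) and noting the rank-one piece is $O(J^{-1})$; the estimates in~\eqref{highmomentv4} are dismissed as similar. Your explicit articulation of the i.i.d.\ structure and the fact that $v^j_t\sim\rho(t)$ is the correct underpinning, and the paper uses it implicitly throughout.
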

The proof is rather tedious but not very insightful. We leave it to Appendix \ref{ap:proofofBforv}.

\subsection{Comparing $\{v^j_t\}$ and $\{u^j_t\}$ systems}\label{sec:meanfield}
In this section we show that the two particle systems are asymptotically equivalent, namely Proposition~\ref{prop:vj_uj}. More specifically, $\{u^j\}$ system is governed by a coupled SDE~\eqref{CAmunew}, while $\{v^j\}$ comes from i.i.d. sampling of the Fokker-Planck equation~\eqref{FKPK} and is governed by~\eqref{eqn:vj}. We will show the $W_2$-Wasserstein distance of the ensemble distribution of $\{v_j\}$ and $\{u_j\}$ converge in $J$ for all $t>0$. This kind of techniques are widely used in many applications such as~\cite{Bolley_Carrillo,CA_IZO_2011, ding2019meanfield, LuLuNolen,glose,Sznitman,jin2018random,JABIN20163588,1937-5093_2014_4_661,Serfaty-2015} and particle method for PDE \cite{FBP,Hou,CHERTOCK2001708,Blob}.

This proposition is a consequence of a few lemmas. We firstly define the distance of the two particle systems:
\begin{equation}\label{eqn:def_x}
x^j_t=u^j_t-v^j_t\,,\quad p^j_t=x^j_t-\overline{x}_t\,,\quad q^j_t=v^j_t-\overline{v}_t\,,
\end{equation}
then we have
\begin{equation}\label{covequation}
\Cov_{u_t}=\Cov_{x_t+v_t}=\Cov_{p_t+q_t},\quad \Cov_{v_t}=\Cov_{q_t},\quad \Cov_{x_t}=\Cov_{p_t}\,.
\end{equation}

We will show in Lemma~\ref{cor:prebound} that the moment of $x^j_t$ is bounded for all time. Then in Lemma~\ref{lem:bound_x} we will show that if the second moment of $x^j_t$ decays with a certain rate $J^{-\alpha}$, where $0\leq\alpha<1/2$, the decay rate can be tightened to $J^{-1/2-\alpha/2+\epsilon}$. According to Lemma~\ref{cor:prebound}, this $\alpha$ is at least $0$, and then we use Lemma~\ref{lem:bound_x} to iterate till we obtain the optimal convergence rate $J^{-1/2+\epsilon}$.

As discussed in the Introduction, this bootstrapping argument is not seen often in the mean-field proofs, mostly because in previous systems some kind of Lipschitz condition is imposed on the coefficient, which immediately prompts the Gr\"onwall inequality to loop back the bound.  When nonlinearity presents, such as in~\cite{huang2018meanfield,Lazarovici2015AMF}, one draws a large domain for the Lipschitz condition to hold true inside and separate the discussions. All these are done assuming the coefficients in the Brownian motion are constants, and can be canceled out when two SDE systems are compared. This allows $L^\infty$ type boundedness. When the Brownian motion coefficients are also functionals of $u$, a comparison cannot eliminate the Brownian motion, and $L^\infty$ estimate has to be replaced by other norms. This difficulty has been encountered in Mckean-Vlasov system as studied in~\cite{Sznitman,Mlard1996}. But to the best of our knowledge, Lipschitz continuity is used.

Now we state the first lemma.
\begin{lem}\label{cor:prebound}
Let $\{u^j_t\}$ solve \eqref{CAmunew} and $\{v^j_t\}$ solve \eqref{eqn:vj} with same initial condition. The coefficient for $v^j_t$ are determined by $\rho$, the solution to~\eqref{FKPK} with the initial data $\rho_0\in\mathcal{C}^2$ that has finite high moments. Let $x^j_t$ be defined as in~\eqref{eqn:def_x}, then under condition \eqref{thmcondition}, for all $2\leq p<\infty$ and $T>0$, we have a constant $C_p$ independent of $J,t$ such that: 
\begin{equation}\label{prebound}
\mathbb{E}|x^j_t|^p=\mathrm{E}|x^1_t|^p\leq C_p\,,\ \mathbb{E}|p^j_t|^p=\mathrm{E}|p^1_t|^p\leq C_p\,.
\end{equation}
for all $1\leq j\leq J$ and $0\leq t\leq T$.
\end{lem}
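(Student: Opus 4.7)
The plan is to derive~\eqref{prebound} directly from the uniform-in-$J$ moment bounds already established in Propositions~\ref{BforM} and~\ref{Bforv}, without analyzing the SDE satisfied by $x^j_t$ itself. All the hard SDE-level work is already absorbed into those two propositions, so what remains is a short triangle-inequality plus exchangeability argument.

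First I would record the exchangeability of the joint family $\{(u^j_t, v^j_t)\}_{j=1}^J$. The initial data are i.i.d., the driving Brownian motions $W^j_t$ are independent, and the drift and diffusion coefficients in~\eqref{CAmunew} depend on the particle cloud only through the symmetric empirical covariance, while those in~\eqref{eqn:vj} depend only on $\rho(t)$, which is common to all particles. Hence the joint law is invariant under permutations of the particle index, which immediately yields $\mathbb{E}|x^j_t|^p = \mathbb{E}|x^1_t|^p$ and $\mathbb{E}|p^j_t|^p = \mathbb{E}|p^1_t|^p$, i.e.\ the first equalities in~\eqref{prebound}.

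Next I would bound $\mathbb{E}|x^j_t|^p$ via
\[
|x^j_t|^p = |u^j_t - v^j_t|^p \leq 2^{p-1}\bigl(|u^j_t|^p + |v^j_t|^p\bigr),
\]
take expectations, and invoke~\eqref{highmomentu} from Proposition~\ref{BforM} to control $\mathbb{E}|u^j_t|^p$ uniformly in $J$ by a constant depending on $p$ and $t$ (of double-exponential form in $t$, which is harmless on the bounded interval $[0,T]$), together with~\eqref{highmomentv} from Proposition~\ref{Bforv} to control $\mathbb{E}|v^j_t|^p$ in the same way. The assumed regularity and moment bounds on $\rho_0$ are precisely the hypotheses required to apply these two propositions. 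Maximizing over $t \in [0, T]$ then delivers the first bound in~\eqref{prebound} with $C_p$ depending only on $p$ and $T$. For $p^j_t = x^j_t - \overline{x}_t$, I would use Jensen's inequality to get $|\overline{x}_t|^p \leq \frac{1}{J}\sum_{k=1}^J |x^k_t|^p$, hence by exchangeability $\mathbb{E}|\overline{x}_t|^p \leq \mathbb{E}|x^1_t|^p$, and a further triangle step gives $\mathbb{E}|p^j_t|^p \leq 2^{p-1}\bigl(\mathbb{E}|x^j_t|^p + \mathbb{E}|\overline{x}_t|^p\bigr) \leq 2^p C_p$.

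There is no genuine obstacle here; everything difficult is already packaged in the previous two propositions. The only point to watch is that the constants from Propositions~\ref{BforM} and~\ref{Bforv} are genuinely independent of $J$, which both statements explicitly guarantee. I note that the condition~\eqref{thmcondition} appearing in the hypothesis is not actually invoked in this particular argument; it is a standing assumption for the section and will be needed in the subsequent bootstrapping step (Lemma~\ref{lem:bound_x}), where the difference of two matrix square roots must be controlled via the Ando--Hemmen inequality.
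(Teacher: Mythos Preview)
Your proposal is correct and follows essentially the same approach as the paper: both argue via the triangle inequality, feeding in the uniform-in-$J$ moment bounds from Propositions~\ref{BforM} and~\ref{Bforv}, and then handle $p^j_t$ by bounding $\overline{x}_t$ through Jensen/Minkowski and exchangeability. The only cosmetic difference is that the paper works with the $L^p$-norm triangle inequality $\bigl(\mathbb{E}|x^j_t|^p\bigr)^{1/p}\leq \bigl(\mathbb{E}|u^j_t|^p\bigr)^{1/p}+\bigl(\mathbb{E}|v^j_t|^p\bigr)^{1/p}$ rather than your convexity bound $|a+b|^p\leq 2^{p-1}(|a|^p+|b|^p)$, and your observation that~\eqref{thmcondition} is not actually used here is accurate.
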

\begin{proof} The first inequality is a direct result from the fact that
\[
\left(\mathbb{E}|x^j_t|^p\right)^{1/p}\leq \left(\mathbb{E}|u^j_t|^p\right)^{1/p}+\left(\mathbb{E}|v^j_t|^p\right)^{1/p}
\]
and then applying Proposition \ref{BforM} \eqref{highmomentu} and Proposition~\ref{Bforv} \eqref{highmomentv}. Then the second inequality comes from 
\[
\left(\mathbb{E}|p^j|^p\right)^{1/p}\leq \left(\EE|x^j_t|^p\right)^{1/p}+\left(\EE|\overline{x}_t|^p\right)^{1/p}\leq \left(\EE|x^j_t|^p\right)^{1/p}+\frac{1}{J}\sum^J_{j=1}\left(\EE|x^j_t|^p\right)^{1/p}\leq 2\left(\mathbb{E}|x^1|^p\right)^{1/p}\,.
\]
\end{proof}

The iterative lemma is now presented. Firstly:
\begin{lem}\label{lem:bound_p}
Let $\{u^j_t\}$ solve \eqref{CAmunew} and $\{v^j_t\}$ solve \eqref{eqn:vj} with same initial condition. The coefficient for $v^j_t$ are determined by $\rho$, the solution to~\eqref{FKPK} with the initial data $\rho_0\in\mathcal{C}^2$ that has finite high moments. Let $x^j_t$ be defined as in~\eqref{eqn:def_x}, then under condition \eqref{thmcondition}, for any $0\leq\alpha<1$ and $T>0$, if there is a constant $C$ independent of $J,t$ so that
\begin{equation}\label{preboundconditionforx}
\mathbb{E}|x^j_t|^2\leq CJ^{-\alpha}\,,
\end{equation}
for all $1\leq j\leq J$ and $0\leq t\leq T$, then we can tighten the decay rate, namely: for any $0<\epsilon<1/2$ and $1\leq j\leq J$, there is a constant $\tilde{C}$ independent of $J,t$ so that
\begin{equation}\label{pestimation}
\mathbb{E}\left|p^j_t\right|^2=\mathbb{E}\left|x^j_t-\frac{1}{J}\sum^J_{j=1}x^k_t\right|^2\leq \tilde{C}J^{-1/2-\alpha/2+\epsilon}\,.
\end{equation}
for all $1\leq j\leq J$ and $0\leq t\leq T$.
\end{lem}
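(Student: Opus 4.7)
The plan is to apply It\^o's formula to the $B$-weighted squared norm $|\tp^j_t|^2$ with $\tp^j_t = \sqrt{B}\,p^j_t$ and $\tq^j_t = \sqrt{B}\,q^j_t$, where $B$ is defined in~\eqref{def:Bustar}. The $B$-weight is essential: writing out the SDE for $p^j = x^j-\bar x$ and multiplying by $\sqrt{B}$, the ``dissipative'' drift becomes $-\sqrt{B}\,\Cov_{u_t}\sqrt{B}\,\tp^j$, and since $\sqrt{B}\,\Cov_{u_t}\sqrt{B}$ is symmetric positive semi-definite, the corresponding It\^o term $-2\langle\tp^j,\sqrt{B}\,\Cov_{u_t}\sqrt{B}\,\tp^j\rangle$ is nonpositive and can be discarded. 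What remains is the cross-drift $-2\langle\tp^j,\sqrt{B}(\Cov_{u_t}-\Cov_{\rho(t)})\sqrt{B}\,\tq^j\rangle\,dt$ together with the quadratic-variation contribution $\tfrac{2(J-1)}{J}\mathrm{Tr}\bigl(B(\sqrt{\Cov_{u_t}}-\sqrt{\Cov_{\rho(t)}})^2\bigr)\,dt$, and the entire estimate reduces to controlling these two.

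Both remaining terms are driven by $\Cov_{u_t}-\Cov_{\rho(t)}$, which I would expand as $\Cov_{p_t}+\Cov_{p_t,q_t}+\Cov_{q_t,p_t}+(\Cov_{v_t}-\Cov_{\rho(t)})$. The last piece has $L^r$-norm of order $J^{-1/2}$ for every $r$ by Proposition~\ref{Bforv} (equation~\eqref{diffforconv}). The three $p$-dependent pieces are controlled by the inductive hypothesis $\EE|x^j|^2\le CJ^{-\alpha}$, which by exchangeability and the triangle inequality gives $\EE|p^j|^2\le CJ^{-\alpha}$, and via interpolation against the uniform higher-moment bounds of Lemma~\ref{cor:prebound} yields $\EE|p^j|^{2+\delta}\le CJ^{-\alpha+\epsilon'}$ for any $\delta,\epsilon'>0$. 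To transfer Frobenius estimates on $\Cov_{u_t}-\Cov_{\rho(t)}$ into estimates on $\sqrt{\Cov_{u_t}}-\sqrt{\Cov_{\rho(t)}}$ (needed for the quadratic-variation term), I would invoke the Ando-Hemmen inequality; this is precisely where hypothesis~\eqref{thmcondition} is used, as it provides the lower bound on $\lambda_{\min}(\Cov_{\rho(t)})$ that keeps the Ando-Hemmen denominator uniformly positive on $[0,T]$.

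For the cross-drift I would split it into four terms following the decomposition and bound each by Cauchy-Schwarz followed by H\"older with a conjugate pair $(r,r')$ where $r\to 1^+$. The aim is to keep a factor of $|\tp^j|$ in a near-$L^2$ norm (where its $J^{-\alpha}$ smallness is preserved) while placing $q^j$-factors in high-exponent $L^{r'}$ norms (where Proposition~\ref{Bforv}'s uniform-in-$J$ bound applies at an $\epsilon$-loss). The pairing of the resulting $J^{-\alpha/2+\epsilon}$ factor from $|\tp^j|$ with the $J^{-1/2}$ factor coming from $\Cov_{v_t}-\Cov_{\rho(t)}$ then produces the target geometric-mean rate $J^{-1/2-\alpha/2+\epsilon}$. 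After absorbing the self-referential $\EE|\tp^j|^2$ contribution into the left-hand side via Young's inequality, a Gronwall on $[0,T]$ with vanishing initial data ($p^j_0=0$, since $u^j_0=v^j_0$) closes the estimate.

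The main obstacle I anticipate is the combinatorial balancing in the H\"older splits: each $|p^j|$-factor must be paired with a $J^{-1/2}$-sized quantity from $\Cov_{v_t}-\Cov_{\rho(t)}$ rather than with another $|p^j|$-factor coming from $\Cov_{p_t}$ or the cross-covariances $\Cov_{p_t,q_t}, \Cov_{q_t,p_t}$, since the latter pairing only reproduces the input rate $J^{-\alpha+\epsilon}$ and halts the bootstrap. Achieving this pairing in all four summands of the $\Cov_{u_t}-\Cov_{\rho(t)}$ expansion simultaneously, while ensuring that the residual high-moment norms of $p^j$ and $q^j$ (controlled by Lemma~\ref{cor:prebound} and Proposition~\ref{Bforv}) do not degrade the final exponent beyond an arbitrary $\epsilon$-loss, is the delicate technical step.
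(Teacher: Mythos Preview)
There is a genuine gap in your plan, and it is precisely the step you flag as ``delicate'': you cannot achieve the $J^{-1/2}$ pairing in all summands, and the remedy is \emph{not} to discard the dissipative drift but to keep it and exploit it as a cancellation term.

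Concretely, after Ando--Hemmen the quadratic-variation contribution contains $\frac{1}{\lambda_0}\EE\|\Cov_{u_t}-\Cov_{v_t}\|_F^2=\frac{1}{\lambda_0}\EE\|\Cov_{p_t+q_t}-\Cov_{q_t}\|_F^2$, and expanding $\Cov_{p+q}-\Cov_q=\Cov_p+\Cov_{p,q}+\Cov_{q,p}$ produces the diagonal piece $\EE\|\Cov_{p_t}\|_F^2$ together with $\EE\|\Cov_{p_t,q_t}\|_F^2$. Neither of these contains any factor of $\Cov_{v_t}-\Cov_{\rho(t)}$, so there is nothing of size $J^{-1/2}$ to pair with. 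Lemma~\ref{firstlemma1} gives only $\EE\|\Cov_{p_t}\|_F^2\le C(\EE|p^1_t|^2)^{1-\epsilon/2}$; feeding in the hypothesis $\EE|p^1_t|^2\le CJ^{-\alpha}$ yields a forcing of size $J^{-\alpha+\epsilon}$, and Gr\"onwall then returns at best $\EE|p^1_t|^2\le CJ^{-\alpha+\epsilon}$, which is no improvement. The same obstruction appears in the cross-drift summands $\langle\tp^j,\sqrt B\,\Cov_{p_t}\sqrt B\,\tq^j\rangle$ and $\langle\tp^j,\sqrt B\,\Cov_{p_t,q_t}\sqrt B\,\tq^j\rangle$: these carry two or three $p$-factors and no $J^{-1/2}$ source. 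Young's inequality cannot rescue you either, since $(\EE|p^1|^2)^{1-\epsilon/2}$ is sublinear and splits only as $\delta\,\EE|p^1|^2+C_\delta$ with $C_\delta$ an $O(1)$ constant.

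The paper's proof avoids this by \emph{not} discarding the dissipative drift and by working with the unweighted norm $|p^j|^2$. The two drift terms are first rewritten jointly as traces, and an algebraic identity extracts a \emph{negative} piece
\[
-\EE\,\mathrm{Tr}\bigl[(\Cov_{p_t+q_t}-\Cov_{q_t})^2B\bigr]\;\le\;-\lambda_{\min}(B)\,\EE\|\Cov_{p_t+q_t}-\Cov_{q_t}\|_F^2\,.
\]
This negative term is then set against the Ando--Hemmen piece $\frac{1}{\lambda_0}\EE\|\Cov_{p_t+q_t}-\Cov_{q_t}\|_F^2$ from the quadratic variation, and the cancellation succeeds precisely because hypothesis~\eqref{thmcondition} guarantees $\lambda_{\min}(B)>1/\lambda_0$. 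What survives after the cancellation are only terms that are either linear in $\EE|p^1_t|^2$ (absorbed by Gr\"onwall) or genuinely of order $J^{-1/2-\alpha/2+\epsilon}$ (coming from products with $\Cov_{v_t}-\Cov_{\rho(t)}$). Your $B$-weighting is thus not the essential device here; the essential device is retaining and algebraically reorganizing the dissipative drift so that it annihilates the unpaired $\|\Cov_{p+q}-\Cov_q\|_F^2$ contribution.
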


Then we have:
\begin{lem}\label{lem:bound_x}
Under the same condition as in Lemma~\ref{lem:bound_p}, we have for any $0<\epsilon<1/2$ and $T>0$, there is a constant $\tilde{C}$ independent of $J,t$ so that
\begin{equation}\label{xestimation}
\mathbb{E}|x^j_t|^2\leq \tilde{C}J^{-1/2-\alpha/2+\epsilon}\,.
\end{equation}
for all $1\leq j\leq J$ and $0\leq t\leq T$.
\end{lem}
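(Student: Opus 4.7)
The plan is to reduce the estimate of $\EE|x^j_t|^2$ to that of $\EE|\bar{x}_t|^2$ and control the latter through the SDE governing the empirical mean, whose diffusion carries a decisive $J^{-1}$ factor. By exchangeability of $\{x^j_t\}_{j=1}^J$ together with $\sum_j p^j_t=0$,
\[
\EE|x^j_t|^2 \,=\, \EE|p^j_t|^2 + \EE|\bar{x}_t|^2,
\]
so Lemma~\ref{lem:bound_p} already handles the first summand at the target rate $J^{-1/2-\alpha/2+\epsilon}$, and it suffices to prove the same bound on $\EE|\bar{x}_t|^2$.

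Averaging \eqref{CAmunew} and \eqref{eqn:vj} yields
\[
d\bar{x}_t = -\Cov_{u_t}B\bar{x}_t\,dt - (\Cov_{u_t}-\Cov_{\rho(t)})B(\bar{v}_t-u^*)\,dt + \bigl(\sqrt{2\Cov_{u_t}}-\sqrt{2\Cov_{\rho(t)}}\bigr)\,d\bar{W}_t,
\]
where $\bar{W}_t=J^{-1}\sum_j W^j_t$ has quadratic variation $J^{-1}I_L\,dt$. Applying It\^o to $|\bar{x}_t|^2$ and taking expectation, the dissipation $-2\EE\langle\bar{x}_t,\Cov_{u_t}B\bar{x}_t\rangle$ is non-positive (the quadratic form of a product of two PSD matrices is non-negative), while the It\^o correction inherits the $J^{-1}$ prefactor and, via Ando--Hemmen under condition~\eqref{thmcondition}, is bounded by $CJ^{-1}\EE\|\Cov_{u_t}-\Cov_{\rho(t)}\|^2$, safely of lower order. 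The drift cross term $-2\EE\langle\bar{x}_t,(\Cov_{u_t}-\Cov_{\rho(t)})B(\bar{v}_t-u^*)\rangle$ I would bound by Cauchy--Schwarz as $(\EE|\bar{x}_t|^2)^{1/2}K(t)$, where $K^2(t):=\EE|(\Cov_{u_t}-\Cov_{\rho(t)})B(\bar{v}_t-u^*)|^2$, and Young's inequality splits this as $(2T)^{-1}\EE|\bar{x}_t|^2+(T/2)K^2(t)$. After absorbing the first piece, Gronwall reduces the lemma to proving $K^2(t)\le CJ^{-1/2-\alpha/2+\epsilon}$ uniformly on $[0,T]$.

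To estimate $K^2(t)$, I would decompose
\[
\Cov_{u_t}-\Cov_{\rho(t)} = (\Cov_{v_t}-\Cov_{\rho(t)}) + \Cov_{p_t} + \tfrac{1}{J}\sum_{j=1}^J\bigl(p^j_t q_t^{jT}+q^j_t p_t^{jT}\bigr).
\]
The first piece is handled directly by Proposition~\ref{Bforv} at rate $J^{-1}$. For the $\Cov_{p_t}$ piece, Jensen gives $\|\Cov_{p_t}\|\le J^{-1}\sum|p^j_t|^2$, whose $L^1$-expectation is exactly $\EE|p^1_t|^2$ and hence at the Lemma~\ref{lem:bound_p} rate; a H\"older pairing in $L^{1+\delta}$ against the uniformly bounded high moments of $|\bar{v}_t-u^*|$ from Proposition~\ref{Bforv} then passes this rate into $K^2$. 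The cross-particle piece is the most delicate: its Frobenius norm squared expands as $J^{-2}\sum_{j,k}\langle p^j_t,p^k_t\rangle\langle q^j_t,q^k_t\rangle$, and a careless Cauchy--Schwarz on the off-diagonal terms loses a factor and only yields $J^{-1/4-\alpha/4+\epsilon/2}$, short of the target.

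The main technical obstacle I anticipate is sharpening this cross-particle bound. My plan is to implement the H\"older/$L^q$ bootstrapping anticipated in the Introduction: interpolate Lemma~\ref{lem:bound_p}'s tight $L^2$ bound on $|p^j_t|$ against the uniform higher-moment bounds from Lemma~\ref{cor:prebound}, so that an arbitrarily small amount of moment weight is shifted into a high-$L^q$ factor (which remains bounded in $J$) while the dominating factor stays close to the sharp $L^2$ rate, possibly augmented by the constraints $\sum_j p^j_t=\sum_j q^j_t=0$ that make the off-diagonal expectations mean-zero in a suitable sense. With $K^2(t)\le CJ^{-1/2-\alpha/2+\epsilon}$ in hand, Gronwall closes the ODE for $\EE|\bar{x}_t|^2$ and the exchangeability identity returns the desired estimate on $\EE|x^j_t|^2$ for every $j$ and every $t\in[0,T]$.
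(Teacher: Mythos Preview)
Your route---decomposing $\EE|x^j_t|^2=\EE|p^j_t|^2+\EE|\bar x_t|^2$ via exchangeability and then controlling $\bar x_t$ through its own SDE---is genuinely different from the paper. The paper instead applies It\^o directly to $|x^j_t|^2$ (equation~\eqref{x2diff}), bounds each term using Lemma~\ref{secondlemma1} together with the improved $p$-bound of Lemma~\ref{lem:bound_p}, and closes a \emph{nonlinear} differential inequality of the shape $y'\lesssim J^{-1/4-\alpha/4+\epsilon/2}\,y^{(2-\epsilon)/4}+J^{-1/2-\alpha/2+\epsilon}$. Your decomposition avoids the nonlinear Gr\"onwall step and is arguably cleaner, but there is a real error to fix.

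The assertion that $-2\EE\langle\bar x_t,\Cov_{u_t}B\bar x_t\rangle\le 0$ because ``the quadratic form of a product of two PSD matrices is non-negative'' is false: for non-commuting PSD matrices $A,C$ the form $x^\top ACx$ can be negative (take two rank-one projections onto nearly orthogonal directions). If you try to bound this term instead via H\"older you only get $C(\EE|\bar x_t|^2)^{1-\delta}$, and the resulting sublinear ODE $z'\lesssim z^{1-\delta}+J^{-1/2-\alpha/2+\epsilon}$ does \emph{not} preserve the $J$-rate on $[0,T]$. The easy repair is to track $|\bar x_t|_B^2:=\langle B\bar x_t,\bar x_t\rangle$: then the dissipation becomes $-2(B\bar x_t)^\top\Cov_{u_t}(B\bar x_t)\le 0$ genuinely, since $\Cov_{u_t}$ is PSD, and the $B$-norm is equivalent to the Euclidean one so all other steps carry over. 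Separately, the ``main obstacle'' you flag for the cross-particle contribution to $K^2$ is not really there: once you feed $\EE|p^1_t|^2\le CJ^{-1/2-\alpha/2+\epsilon}$ from Lemma~\ref{lem:bound_p} into \eqref{dealPQ} of Lemma~\ref{firstlemma1} you already obtain $\EE\|\Cov_{p_t,q_t}\|_F^2\le CJ^{-1/2-\alpha/2+\epsilon}$, and a single H\"older interpolation against the bounded moments of $|\bar v_t-u^*|$ (Proposition~\ref{Bforv}) delivers the target rate for $K^2$. The $J^{-1/4-\alpha/4+\epsilon/2}$ you quote is the bound on the \emph{norm} $(\EE\|\Cov_{p,q}\|^2)^{1/2}$, not on $K^2$ itself.
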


This lemma, when combined with Lemma~\ref{cor:prebound} immediately allows us to show Proposition~\ref{prop:vj_uj}.

\begin{proof}[Proof of Proposition \ref{prop:vj_uj}]
First, by Corollary \ref{cor:prebound}, we have the condition \eqref{preboundconditionforx} holds true for $\alpha_0=0$. Then Lemma \ref{lem:bound_x} implies \eqref{preboundconditionforx} is true for $\alpha_1=1/2-\epsilon$ for any small $\epsilon>0$. Recursively:
\begin{equation*}
\alpha_n=1/2+\alpha_{n-1}/2-\epsilon\,.
\end{equation*}
Since $\lim_{n\rightarrow\infty}\alpha_n=1-2\epsilon$, \eqref{preboundconditionforx} holds true with $\alpha=1-2\epsilon$ for any $\epsilon>0$, and this completes the proof. 
\end{proof}

Now we prove the two lemmas.
\begin{proof}[Proof of Lemma~\ref{lem:bound_p}]
First of all, due to the symmetry of the particle system, for all $1\leq j\leq J$ and $0\leq t\leq 1$:
\begin{equation*}
\mathbb{E}|p^j_t|^2=\mathbb{E}|p^1_t|^2\,,\quad \mathbb{E}|x^j_t|^2=\mathbb{E}|x^1_t|^2\,.
\end{equation*}
Then condition \eqref{preboundconditionforx} implies
\begin{equation}\label{eqn:pre_bound_p}
\left(\mathbb{E}|p^j_t|^2\right)^{1/2}\leq \left(\EE|x^j_t|^2\right)^{1/2}+\left(\EE|\overline{x}_t|^2\right)^{1/2}\leq 2\left(\mathbb{E}|x^1_t|^2\right)^{1/2}\leq 2CJ^{-\alpha/2}\,.
\end{equation}

Subtracting the SDEs~\eqref{CAmunew} and~\eqref{eqn:vj}, we have
\begin{equation}\label{xsde}
\begin{aligned}
dx^j_t=&\left(-\Cov_{x_t+v_t}B(x^j_t+v^j_t)+\Cov_{\rho(t)}Bv^j_t\right)dt+\left(\Cov_{x_t+v_t}-\Cov_{\rho(t)}\right)Bu^*dt\\
&+\left(\sqrt{2\Cov_{x_t+v_t}}-\sqrt{2\Cov_{\rho(t)}}\right)dW^j_t\,.
\end{aligned}
\end{equation}

Using Ito's formula, this becomes
\[
\begin{aligned}
d|x^j_t|^2=&-2\left\langle x^j_t,\Cov_{x_t+v_t}Bx^j_t\right\rangle dt-2\left\langle x^j_t,\left(\Cov_{x_t+v_t}-\Cov_{\rho(t)}\right)Bv^j_t\right\rangle dt\\
&+2\left\langle x^j_t,\left(\Cov_{x_t+v_t}-\Cov_{\rho(t)}\right)Bu^*\right\rangle dt+2\mathrm{Tr}\left(\sqrt{\Cov_{x_t+v_t}}-\sqrt{\Cov_{\rho(t)}}\right)^2dt\\
&+2\left\langle x^j_t,\left(\sqrt{2\Cov_{x_t+v_t}}-\sqrt{2\Cov_{\rho(t)}}\right)dW^j_t\right\rangle\,.
\end{aligned}
\]

Replace $\Cov_{\rho(t)}$ with $\Cov_{v_t}$ in second and third terms, we obtain
\begin{equation}\label{x2diff}
\begin{aligned}
d|x^j_t|^2=&-2\left\langle x^j_t,\Cov_{x_t+v_t}Bx^j_t\right\rangle dt-2\left\langle x^j_t,\left(\Cov_{x_t+v_t}-\Cov_{v_t}\right)Bv^j_t\right\rangle dt\\
&+2\left\langle x^j_t,\left(\Cov_{x_t+v_t}-\Cov_{v_t}\right)Bu^*\right\rangle dt+2\mathrm{Tr}\left(\sqrt{\Cov_{x_t+v_t}}-\sqrt{\Cov_{\rho(t)}}\right)^2dt\\
&+2\left\langle x^j_t,\left(\sqrt{2\Cov_{x_t+v_t}}-\sqrt{2\Cov_{\rho(t)}}\right)dW^j_t\right\rangle+\mathrm{R}^j_tdt\,,
\end{aligned}
\end{equation}
where the remainder $R^j_t$ is introduced to account for the replacement:
\[
R^j_t=2\left\langle x^j_t,\left(\Cov_{\rho(t)}-\Cov_{v_t}\right)Bv^j_t\right\rangle-2\left\langle x^j_t,\left(\Cov_{\rho(t)}-\Cov_{v_t}\right)Bu^*\right\rangle\,.
\]

We then take average of~\eqref{xsde} in $j$ to obtain
\[
\begin{aligned}
d\overline{x}_t=&\left(-\Cov_{x_t+v_t}B(\overline{x}_t+\overline{v}_t)+\Cov_{\rho(t)}B\overline{v}_t\right)dt+\left(\Cov_{x_t+v_t}-\Cov_{\rho(t)}\right)Bu^*dt\\
&+\left(\sqrt{2\Cov_{x_t+v_t}}-\sqrt{2\Cov_{\rho(t)}}\right)d\overline{W}_t\,,
\end{aligned}
\]
so that according to Ito's formula:
\begin{equation}\label{overlinex2diff}
\begin{aligned}
d|\overline{x}_t|^2=&-2\left\langle \overline{x}_t,\Cov_{x_t+v_t}B\overline{x}_t\right\rangle dt-2\left\langle \overline{x}_t,\left(\Cov_{x_t+v_t}-\Cov_{v_t}\right)B\overline{v}_t\right\rangle dt\\
&+2\left\langle \overline{x}_t,\left(\Cov_{x_t+v_t}-\Cov_{v_t}\right)Bu^*\right\rangle dt+\frac{2}{J}\mathrm{Tr}\left(\sqrt{\Cov_{x_t+v_t}}-\sqrt{\Cov_{\rho(t)}}\right)^2dt\\
&+2\left\langle \overline{x}_t,\left(\sqrt{2\Cov_{x_t+v_t}}-\sqrt{2\Cov_{\rho(t)}}\right)d\overline{W}_t\right\rangle+\overline{R}_tdt,
\end{aligned}
\end{equation}
where the remainder term:
\[
\overline{R}_t=2\left\langle \overline{x}_t,\left(\Cov_{\rho(t)}-\Cov_{v_t}\right)B\overline{v}_t\right\rangle-2\left\langle \overline{x}_t,\left(\Cov_{\rho(t)}-\Cov_{v_t}\right)Bu^*\right\rangle\,.
\]

Combine~\eqref{x2diff} and~\eqref{overlinex2diff}, it is a straightforward calculation that:
\begin{equation}\label{xoverlinex2diff}
\begin{aligned}
d\left(\frac{1}{J}\sum^J_{j=1}|x^j_t|^2-|\overline{x}_t|^2\right)=&-\frac{2}{J}\sum^J_{j=1}\left\langle p^j_t,\Cov_{p_t+q_t}Bp^j_t\right\rangle dt-\frac{2}{J}\sum^J_{j=1}\left\langle p^j_t,\left(\Cov_{p_t+q_t}-\Cov_{q_t}\right)Bq^j_t\right\rangle dt\\
&+2\left(1-\frac{1}{J}\right)\mathrm{Tr}\left(\sqrt{\Cov_{x_t+v_t}}-\sqrt{\Cov_{\rho(t)}}\right)^2dt+\left(\frac{1}{J}\sum^J_{j=1}R^j_t-\overline{R}_t\right)dt\\
&+\frac{2}{J}\sum^J_{j=1}\left\langle \left(x^j_t-\overline{x}_t\right),\left(\sqrt{2\Cov_{x_t+v_t}}-\sqrt{2\Cov_{\rho(t)}}\right)d\left(W^j_t-\overline{W}_t\right)\right\rangle\,.
\end{aligned}
\end{equation}

According to the definition of $p^j_t$, taking the expectation of~\eqref{xoverlinex2diff} we have $d\mathbb{E}\left|p^j_t\right|^2$.

The expectation of the last term is $0$ due to the property of the Brownian motion. Since $R^j_t$ involves the difference between $\Cov_\rho$ and $\Cov_{v}$, it is expected that the second last term can be controlled using the central limit theorem. Indeed:
\begin{equation}\label{diffR}
\begin{aligned}
\EE\left(\frac{1}{J}\sum^J_{j=1}R^j_t-\overline{R}_t\right)&=2\EE\frac{1}{J}\sum^J_{j=1}\left\langle p^j_t,\left(\Cov_{\rho(t)}-\Cov_{v_t}\right)Bq^j_t\right\rangle=2\EE\left\langle p^1_t,\left(\Cov_{\rho(t)}-\Cov_{v_t}\right)Bq^1_t\right\rangle\\
&\leq 2\left(\EE\|\Cov_{\rho(t)}-\Cov_{v_t}\|^2_2\right)^{1/2}\left(\EE\|p^1_t\|^2\|Bq^1_t\|^2\right)^{1/2}\\
&\leq 2\left(\EE\|\Cov_{\rho(t)}-\Cov_{v_t}\|^2_2\right)^{1/2}\left(\EE\|p^1_t\|^{2-\epsilon}\|p^1_t\|^{\epsilon}\|Bq^1_t\|^2\right)^{1/2}\\
&\leq2\left(\EE\|\Cov_{\rho(t)}-\Cov_{v_t}\|^2_2\right)^{1/2}\left(\EE\|p^1_t\|^{2}\right)^{(2-\epsilon)/4}\left(\EE\|p^1_t\|^{2}\|Bq^1_t\|^{4/\epsilon}\right)^{\epsilon/4}\\
&\leq 2\left(\EE\|\Cov_{\rho(t)}-\Cov_{v_t}\|^2_2\right)^{1/2}\left(\EE\|p^1_t\|^{2}\right)^{(2-\epsilon)/4}\left(\EE\|p^1_t\|^{4}\right)^{\epsilon/8}\left(\EE\|Bq^1_t\|^{8/\epsilon}\right)^{\epsilon/8}\\
&\leq \frac{C}{J^{1/2}}\left(\EE\|p^1_t\|^{2}\right)^{(2-\epsilon)/4}\leq C J^{-1/2-\alpha/2+\epsilon\alpha/4}\,,
\end{aligned}
\end{equation}
where we use symmetry in the second equality and the $-1/2$ rate comes from~\eqref{diffforconv}, and the $-\alpha/2+\epsilon\alpha/4$ rate comes from~\eqref{eqn:pre_bound_p}. The uniform boundedness of high moments are stated in Lemma~\ref{cor:prebound}, equation~\eqref{prebound}, and Proposition~\ref{Bforv}, equation~\eqref{highmomentv}-\eqref{highmomentv4}. The constant here depends on $\epsilon$.

The third term in~\eqref{xoverlinex2diff} is expected to contribute a relatively slow-decaying term. For that, we apply Ando-Hemmen inequality (see for instance Theorem 6.2 on page 135 in \cite{Matrixroot}). Define $\lambda_0 = \lambda_{\min}\left(\Cov_{\rho(t)}\right)$, then:
\begin{equation}\label{Trterm1}
\begin{aligned}
&\EE\mathrm{Tr}\left(\sqrt{\Cov_{x_t+v_t}}-\sqrt{\Cov_{\rho(t)}}\right)^2=\EE\left\|\sqrt{\Cov_{x_t+v_t}}-\sqrt{\Cov_{\rho(t)}}\right\|^2_F\\
\leq&\EE\left[\frac{1}{\lambda_0}\left\|\Cov_{x_t+v_t}-\Cov_{\rho(t)}\right\|^2_F\right]\\
\leq&\frac{1}{\lambda_0}\left\{\EE\left\|\Cov_{x_t+v_t}-\Cov_{v_t}\right\|^2_F+\EE\left\|\Cov_{v_t}-\Cov_{\rho(t)}\right\|^2_F\right\}\\
&+\frac{2}{\lambda_0}\left(\EE\left\|\Cov_{x_t+v_t}-\Cov_{v_t}\right\|^2_F\right)^{1/2}\left(\EE\left\|\Cov_{v_t}-\Cov_{\rho(t)}\right\|^2_F\right)^{1/2}\\
\leq&\frac{1}{\lambda_0}\EE\left\|\Cov_{x_t+v_t}-\Cov_{v_t}\right\|^2_F+\frac{2J^{-1/2}}{\lambda_0}\left(\EE\left\|\Cov_{x_t+v_t}-\Cov_{v_t}\right\|^2_F\right)^{1/2}+\frac{J^{-1}}{\lambda_0}\,,
\end{aligned}
\end{equation}
where we used Proposition~\ref{Bforv} to control $\left(\EE\left\|\Cov_{v_t}-\Cov_{\rho(t)}\right\|^2_F\right)^{1/2}$. To estimate $\left(\EE\left\|\Cov_{x_t+v_t}-\Cov_{v_t}\right\|^2_F\right)^{1/2}$, we cite Lemma~\ref{secondlemma1} in Appendix~\ref{sec:twolemmas}, and use~\eqref{eqn:pre_bound_p} for \eqref{eqn:lemma2}:
\[
\left(\EE\left\|\Cov_{x_t+v_t}-\Cov_{v_t}\right\|^2_F\right)^{1/2}\leq C J^{-\alpha/2+\epsilon\alpha/4}\,.
\]
Here the constant $C$ only depend on $\epsilon$.

Plug this in~\eqref{Trterm1} to replace the second term, we simplify it to:
\begin{equation}\label{traceterm}
\begin{aligned}
\EE\mathrm{Tr}\left(\sqrt{\Cov_{x_t+v_t}}-\sqrt{\Cov_{\rho(t)}}\right)^2\leq&\frac{1}{\lambda_0}\EE\left\|\Cov_{p_t+q_t}-\Cov_{q_t}\right\|^2_F+C_\epsilon J^{-1/2-\alpha/2+\epsilon\alpha/4}\,.
\end{aligned}
\end{equation}

Finally, we deal with first and second term in \eqref{xoverlinex2diff}, we first rewrite:
\begin{equation*}
\begin{aligned}
&-\frac{2}{J}\EE\left[\sum^J_{j=1}\left\langle p^j_t,\Cov_{p_t+q_t}Bp^j_t\right\rangle+\sum^J_{j=1}\left\langle p^j_t,\left(\Cov_{p_t+q_t}-\Cov_{q_t}\right)Bq^j_t\right\rangle\right]\\
=&-\EE\left[2 \mathrm{Tr}\left[\Cov_{p_t}\Cov_{p_t+q_t}B+\Cov_{q_t,p_t}\left(\Cov_{p_t+q_t}-\Cov_{q_t}\right)B\right]\right]\\
=&-\EE\left[\mathrm{Tr}\left[\left(\Cov_{p_t+q_t}-\Cov_{q_t}\right)^2B\right]\right]+\EE\left[\mathrm{Tr}\left[\Cov_{q_t,p_t}\Cov_{p_t,q_t}B\right]\right]\\
&-\EE\left[\mathrm{Tr}\left[\left(\Cov_{p_t}+\Cov_{q_t,p_t}\right)B\left(\Cov_{p_t}+\Cov_{p_t,q_t}\right)\right]\right]-2\EE\left[\mathrm{Tr}\left[\Cov_{p_t}\Cov_{q_t}B\right]\right]\\
\leq&-\EE\left[\mathrm{Tr}\left[\left(\Cov_{p_t+q_t}-\Cov_{q_t}\right)^2B\right]\right]+\EE\left[\mathrm{Tr}\left[\Cov_{q_t,p_t}\Cov_{p_t,q_t}B\right]\right]-2\EE\left[\mathrm{Tr}\left[\Cov_{p_t}\Cov_{q_t}B\right]\right].
\end{aligned}\,
\end{equation*}
The first term becomes:
\begin{equation}\label{firstterm}
\begin{aligned}
&-\EE\left[\mathrm{Tr}\left[\left(\Cov_{p_t+q_t}-\Cov_{q_t}\right)^2B\right]\right]\leq -\lambda_{\min}(B)\EE\left[\mathrm{Tr}\left[\left(\Cov_{p_t+q_t}-\Cov_{q_t}\right)^2\right]\right]\\
\leq&-\lambda_{\min}(B)\EE\left\|\Cov_{p_t+q_t}-\Cov_{q_t}\right\|^2_F=-\lambda_{\min}(B)\EE\left\|\Cov_{x_t+v_t}-\Cov_{v_t}\right\|^2_F,
\end{aligned}
\end{equation}
while the second term can be bounded by applying~\eqref{dealPQ} in Appendix \ref{sec:twolemmas} Lemma \ref{firstlemma1}:
\begin{equation*}
\begin{aligned}
&\EE\left[\mathrm{Tr}\left[\Cov_{q_t,p_t}\Cov_{p_t,q_t}B\right]\right]\leq \|B\|_2\EE\left[\mathrm{Tr}\left[\Cov_{q_t,p_t}\Cov_{p_t,q_t}\right]\right]\\
=&\|B\|_2\EE\left\|\Cov_{p_t,q_t}\right\|^2_F\leq \|B\|_2\textrm{Var}(\rho(t))\EE|p^1_t|^2+C J^{-1/2-\alpha(1-\epsilon)}\,,
\end{aligned}
\end{equation*}
where the last inequality comes from \eqref{dealPQ} in Appendix \ref{sec:twolemmas} Lemma \ref{firstlemma1}. 

And the third term:
\begin{equation}\label{F3term}
\begin{aligned}
\left|\EE\left[\mathrm{Tr}\left[\Cov_{p_t}\Cov_{q_t}B\right]\right]\right|&\leq \EE\left(\sum^J_{j=1}\left\langle p^j_t,\Cov_{q_t}B p^j_t\right\rangle\right)=\EE\left(\sum^J_{j=1}\left\langle p^j_t,\Cov_{\rho(t)}B p^j_t\right\rangle+\sum^J_{j=1}\left\langle p^j_t,\left(\Cov_{q_t}-\Cov_{\rho(t)}\right)B p^j_t\right\rangle\right)\\
&\leq \|B\|_2\textrm{Var}(\rho(t))\mathbb{E}|p^1|^2+\EE\left(\sum^J_{j=1}\|\left(\Cov_{q_t}-\Cov_{\rho(t)}\right)B\|_2\left|p^1\right|^2\right)\\
&\leq \|B\|_2\textrm{Var}(\rho(t))\mathbb{E}|p^1|^2+C J^{-1/2}\left(\EE\left|p^1\right|^2\right)^{1-\epsilon}\\
&\leq \|B\|_2\textrm{Var}(\rho(t))\mathbb{E}|p^1|^2+C J^{-1/2-\alpha(1-\epsilon)},
\end{aligned}
\end{equation}
where we used the same techniques as in \eqref{Example1} in Appendix~\ref{sec:twolemmas}.

Combine~\eqref{diffR},~\eqref{traceterm}-\eqref{F3term} into~\eqref{xoverlinex2diff}, we finally have:
\[
\begin{aligned}
\frac{d\EE|p^1_t|^2}{dt}\leq& 2\|B\|_2\textrm{Var}(\rho(t))\EE|p^1_t|^2 - \left(\lambda_{\min}(B)-\frac{1}{\lambda_0}\right)\EE\left[\mathrm{Tr}\left[\left(\Cov_{p_t+q_t}-\Cov_{q_t}\right)^2\right]\right]\\
&+ C_\epsilon J^{-1/2-\alpha(1-\epsilon)}+ C_\epsilon J^{-1/2-\alpha/2+\epsilon\alpha/4}\,.
\end{aligned}
\]
Under the assumption that $\lambda_{\min}(B)\lambda_0\geq 1$, and with $\EE|p^1_0|^2=0$, we apply Gr\"onwall inequality for $\EE|p^1_t|^2$ to obtain
\begin{equation*}
\mathbb{E}|p^1_t|^2\leq C_\epsilon J^{-1/2-\alpha/2+\epsilon\alpha/4}\,,
\end{equation*}
for all finite time, finishing the proof for~\eqref{pestimation}.
\end{proof}

\begin{proof}[Proof of Lemma~\ref{lem:bound_x}]
To prove~\eqref{xestimation}, we first note, citing Lemma~\ref{secondlemma1} in Appendix~\ref{sec:twolemmas} and use the result from Lemma~\ref{lem:bound_p}:

\begin{equation*}
\left(\EE\|\Cov_{x_t+v_t}-\Cov_{v_t}\|^2_2\right)^{1/2}\leq C_{\epsilon} J^{-1/4-\alpha/4+\epsilon/2}\,.
\end{equation*}

This helps us to control each term in~\eqref{x2diff}:
\begin{itemize}
\item[1.]
\begin{equation*}
\begin{aligned}
\EE\left\langle x^j_t,\left(\Cov_{x_t+v_t}-\Cov_{v_t}\right)Bv^j_t\right\rangle&\leq \left(\EE|x^j_t|^2|Bv^j_t|^2\right)^{1/2}\left(\EE\|\Cov_{x_t+v_t}-\Cov_{v_t}\|^2_2\right)^{1/2}\\&\leq C_{\epsilon} J^{-1/4-\alpha/4+\epsilon/2}\left(\EE|x^1_t|^2\right)^{(2-\epsilon)/4}
\end{aligned}\,;
\end{equation*}
\item[2.]
\begin{equation*}
\begin{aligned}
\EE\left\langle x^j_t,\left(\Cov_{x_t+v_t}-\Cov_{v_t}\right)Bu^*\right\rangle&\leq \left(\EE|x^j_t|^2|Bu^*|^2\right)^{1/2}\left(\EE\|\Cov_{x_t+v_t}-\Cov_{v_t}\|^2_2\right)^{1/2}\\&\leq C_{\epsilon} J^{-1/4-\alpha/4+\epsilon/2}\left(\EE|x^1_t|^2\right)^{1/2}\\
&\leq C_{\epsilon} J^{-1/4-\alpha/4+\epsilon/2}\left(\EE|x^1_t|^2\right)^{(2-\epsilon)/4}
\end{aligned}\,;
\end{equation*}
\item[3.]
\[
\begin{aligned}
\EE\mathrm{Tr}\left(\sqrt{\Cov_{x_t+v_t}}-\sqrt{\Cov_{\rho(t)}}\right)^2\leq \frac{C_{\epsilon}}{\lambda_0}J^{-1/2-\alpha/2+\epsilon}\leq C_{\epsilon} J^{-1/2-\alpha/2+\epsilon}\,;
\end{aligned}
\]
\item[4.]
\begin{equation*}
\begin{aligned}
\EE R^j_t&\leq \left(\EE|x^j_t|^2|Bv^j_t|^2\right)^{1/2}\left(\EE\|\Cov_{v_t}-\Cov_{\rho(t)}\|^2_2\right)^{1/2}+\left(\EE|x^j_t|^2|Bu^*|^2\right)^{1/2}\left(\EE\|\Cov_{v_t}-\Cov_{\rho(t)}\|^2_2\right)^{1/2}\\
&\leq C_{\epsilon} J^{-1/4-\alpha/4+\epsilon/2}\left(\EE|x^1_t|^2\right)^{(2-\epsilon)/4}\,,
\end{aligned}
\end{equation*}
\end{itemize}
where we use H\"older's inequality and uniform boundedness of high moments, stated in Proposition~\ref{Bforv} \eqref{highmomentv}-\eqref{highmomentv4} and Lemma~\ref{cor:prebound} \eqref{prebound} in these estimations. Now, we rewrite:
\[
\frac{d\mathbb{E}|x^1_t|^2}{dt}\leq C_\epsilon J^{-1/4-\alpha/4+\epsilon/2}\mathbb{E}|x^1_t|^{(2-\epsilon)/4}+J^{-1/2-\alpha/2+\epsilon}\,,
\]
and with $\mathbb{E}|x^1_0|^2=0$, we finally have:
\[
\mathbb{E}|x^1_t|^2\leq C_\epsilon J^{-(1+\alpha-2\epsilon)/(2-\epsilon)},
\]
by the Gr\"onwall inequality. This finishes the proof.
\end{proof}

\section{Conclusion}
In this paper we give the rigorous justification of the validity of EKS algorithm as a sampling method when the forward map is linear. The composition of the proof largely follows the strategy of the classical coupling method. The dynamics of the particles is described by a coupled SDE system, which in the large sampling limit, converges to a Fokker-Planck equation whose long time equilibrium is the target distribution. In the nonlinear setting, we claim the method is bound to be wrong, and we give the argument in Appendix~\ref{sec:appendixnonlinear}. However, this is \emph{not} to say that the method is not useful in the nonlinear setting: the main attraction of the method is that it provides a way to achieves gradient-free property. With rigorous understanding of the algorithm, the studies shown here pave the way for further designing gradient-free algorithms.

\begin{appendix}
\section{Proof of Proposition \ref{Bforv}}\label{ap:proofofBforv}
The proof is similar to \cite{ding2019meanfield} (Lemma 3). The bounds in~\eqref{highmomentv} are immediate considering Lemma \ref{lem3:EnFL}. We only show~\eqref{diffforconv} here. Without loss of generality, assume $\mathbb{E}(v^j_t)=0$, then we write $\Cov_{v_t}$ as
\[
\Cov_{v_t}=\frac{J-1}{J^2}\left(\sum^{J}_{j=1}v^j_t\otimes v^j_t\right)-\frac{1}{J^2}\sum^J_{j\neq k}v^j_t\otimes v^k_t\,.
\]
Now we divide \eqref{diffforconv} into three parts
\[
\begin{aligned}
&\left(\mathbb{E}\left\|\Cov_{v_t}-\Cov_{\rho(t)}\right\|^p_2\right)^{1/p}\\
\leq&\left(\mathbb{E}\left\|\frac{1}{J}\left(\sum^{J}_{j=1}v^j_t\otimes v^j_t\right)-\frac{1}{J}\left(\sum^{J}_{j=1}\Cov_{\rho(t)}\right)\right\|^p_2\right)^{1/p}\\
&\ +\left(\mathbb{E}\left\|\frac{1}{J^{2}}\left(\sum^J_{j=1}v^j_t\right)\otimes\left(\sum^J_{k=1}v^k_t\right)\right\|^p_2\right)^{1/p}\,.
\end{aligned}
\]
To control the first term, we have
\[
\begin{aligned}
&\left(\mathbb{E}\left\|\frac{1}{J}\left(\sum^{J}_{j=1}v^j_t\otimes v^j_t\right)-\frac{1}{J}\left(\sum^{J}_{j=1}\Cov_{\rho(t)}\right)\right\|^p_2\right)^{1/p}\\
\leq&\,C_{p,L}\left(\mathbb{E}\left\|\frac{1}{J}\left(\sum^{J}_{j=1}v^j_t\otimes v^j_t\right)-\Cov_{\rho(t)}\right\|^p_F\right)^{1/p}\\
\leq&\,C_{p,L}\sum^L_{m,n=1} \left(\mathbb{E}\left(\frac{1}{J}\sum^{J}_{j=1}v^j_t\otimes v^j_t-\Cov_{\rho(t)}\right)_{m,n}^{p}\right)^{1/p}\\
=&\,\frac{C_{p,L}}{J^{\frac{1}{2}}}\sum^L_{m,n=1} \left\{\mathbb{E}\left[\frac{\sum^{J}_{j=1}\left(v^j_t\otimes v^j_t-\Cov_{\rho(t)}\right)_{m,n}}{\sqrt{J}}\right]^{p}\right\}^{1/p},
\end{aligned}
\]
where $\left(\frac{1}{J}\sum^{J}_{j=1}v^j_t\otimes v^j_t-\Cov_{\rho(t)}\right)_{m,n}$ means the $(m,n)^{th}$ entry of matrix. For each $m,n$, define a new sequence of random variables $\{w^j_{m,n}\}^{J}_{j=1}$ as 
\begin{equation}\label{wjmn}
w^j_{m,n}=\left(v^j_t\otimes v^j_t-\Cov_{\rho(t)}\right)_{m,n}\,,
\end{equation}
then they are i.i.d  with zero expectation and finite high moments:
\begin{equation}
\EE(w^j_{m,n})=0,\quad \EE|w^j_{m,n}|^p<\infty\,.
\end{equation}
By \cite{johnson1985}, we have
\begin{equation}\label{wjmn2}
\mathbb{E}\left[\sum^J_{j=1}w^j_{m,n}\right]^p\lesssim J^{p/2}\,,
\end{equation}
which implies
\[
\mathbb{E}\left[\frac{\sum^{J}_{j=1}\left(v^j_t\otimes v^j_t-\Cov_{\rho(t)}\right)_{m,n}}{\sqrt{J}}\right]^{p}\sim O(1)\,.
\]
For the second term, we can use similar argument as \eqref{wjmn}-\eqref{wjmn2}:
\[
\begin{aligned}
&\left(\mathbb{E}\left\|\frac{1}{J^{2}}\left(\sum^J_{j=1}v^j_t\right)\otimes\left(\sum^J_{k=1}v^k_t\right)\right\|^p_2\right)^{1/p}\\
=& \left(\frac{1}{J^{2p}}\mathbb{E}\left|\sum^J_{j=1}v^j_t\right|^{2p}\right)^{1/p}\lesssim (J^{-p})^{1/p}=J^{-1}\,.
\end{aligned}
\]
In conclusion, we finally obtain
\[
\left(\mathbb{E}\left\|\Cov_{v_t}-\Cov_{\rho(t)}\right\|^p_2\right)^{1/p}\lesssim J^{-\frac{1}{2}}\,,
\]
which proves \eqref{diffforconv}. The proof for bounding \eqref{highmomentv4} is similar and is omitted from here.

\section{Two Lemmas}\label{sec:twolemmas}
\begin{lem}\label{firstlemma1}
Let $\{u^j_t\}$ solve \eqref{CAmunew} and $\{v^j_t\}$ solve \eqref{eqn:vj} with same initial condition. The coefficient for $v^j_t$ are determined by $\rho$, the solution to~\eqref{FKPK} with the initial data $\rho_0\in\mathcal{C}^2$ that has finite high moments. Let $x^j_t,p^j_t,q^j_t$ be defined as in~\eqref{eqn:def_x}, then for any $0<\epsilon<1/2$ and $T>0$, there exists a constant $C>0$ independent of $J,t$ such that
\begin{equation}\label{dealPP}
\EE\|\Cov_{p_t}\|^2_F\leq C\left(\EE|p^1_t|^2\right)^{(2-\epsilon)/2}\,
\end{equation}
and
\begin{equation}\label{dealPQ}
\EE\|\Cov_{p_t,q_t}\|^2_F=\EE\|\Cov_{q_t,p_t}\|^2_F\leq\textrm{Var}(\rho(t))\EE|p^1_t|^2+C J^{-1/2}\left(\EE|p^1_t|^2\right)^{1-\epsilon}\,
\end{equation}
for any $0\leq t\leq T$.
\end{lem}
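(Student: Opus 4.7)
The plan is to reduce both estimates to controlling fourth-order moments of $p^j_t$ and $q^j_t$ by combining H\"older's inequality with the uniform high-moment bounds already available: Lemma \ref{cor:prebound} controls $\mathbb{E}|p^j_t|^p$ for every $p\geq 2$ by a constant independent of $J,t$, and \eqref{diffforconv} in Proposition \ref{Bforv} controls $\mathbb{E}\|\Cov_{v_t}-\Cov_{\rho(t)}\|_2^p$ by $CJ^{-p/2}$ for every $p$. The structural simplification that makes everything tractable is that $\sum_j p^j_t=0$ and $\sum_j q^j_t=0$ by construction, so
\[
\Cov_{p_t}=\frac{1}{J}\sum_{j=1}^J p^j_t\otimes p^j_t,\qquad \Cov_{p_t,q_t}=\frac{1}{J}\sum_{j=1}^J p^j_t\otimes q^j_t,\qquad \Cov_{q_t}=\Cov_{v_t}.
\]

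For \eqref{dealPP} I would expand $\|\Cov_{p_t}\|_F^2=\frac{1}{J^2}\sum_{j,k}\langle p^j_t,p^k_t\rangle^2\leq \frac{1}{J^2}\sum_{j,k}|p^j_t|^2|p^k_t|^2$ by Cauchy-Schwarz. To reach the exponent $(2-\epsilon)/2$ after taking expectation, the trick is to split $|p^j_t|^2=|p^j_t|^{2-\epsilon}\cdot|p^j_t|^{\epsilon}$ and apply H\"older with exponents $2/(2-\epsilon)$ and $2/\epsilon$, yielding
\[
\mathbb{E}|p^j_t|^2|p^k_t|^2\leq \bigl(\mathbb{E}|p^j_t|^2\bigr)^{(2-\epsilon)/2}\bigl(\mathbb{E}|p^j_t|^2|p^k_t|^{4/\epsilon}\bigr)^{\epsilon/2}.
\]
The first factor equals $(\mathbb{E}|p^1_t|^2)^{(2-\epsilon)/2}$ by particle symmetry, and the second factor is a finite constant thanks to Lemma \ref{cor:prebound} applied at arbitrarily large moments (after a further Cauchy-Schwarz separating $|p^j_t|$ from $|p^k_t|$).

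For \eqref{dealPQ} the cornerstone is the matrix inequality
\[
\|\Cov_{p_t,q_t}\|_F^2\leq \|\Cov_{q_t}\|_2\cdot\mathrm{Tr}(\Cov_{p_t}),
\]
which I would derive by stacking particles into $P=[p^1_t,\dots,p^J_t]$ and $Q=[q^1_t,\dots,q^J_t]$, rewriting $\|\Cov_{p_t,q_t}\|_F^2=\frac{1}{J^2}\mathrm{Tr}(Q^\top Q\,P^\top P)$, and applying $\mathrm{Tr}(AB)\leq\|A\|_2\mathrm{Tr}(B)$ valid for positive semidefinite $B$. Since $\mathrm{Tr}(\Cov_{p_t})=\frac{1}{J}\sum_j|p^j_t|^2$, I then split $\Cov_{q_t}=\Cov_{\rho(t)}+(\Cov_{v_t}-\Cov_{\rho(t)})$. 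The deterministic piece yields $\|\Cov_{\rho(t)}\|_2\,\mathbb{E}|p^1_t|^2\leq \mathrm{Var}(\rho(t))\,\mathbb{E}|p^1_t|^2$, because the operator norm of a positive semidefinite matrix is dominated by its trace.

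The main obstacle, and the only delicate step, is the stochastic error term $\mathbb{E}[\|\Cov_{v_t}-\Cov_{\rho(t)}\|_2\,|p^1_t|^2]$, where a naive Cauchy-Schwarz would give only the exponent $1/2$ on $\mathbb{E}|p^1_t|^2$ instead of the required $1-\epsilon$. My plan is a two-factor H\"older split: write $|p^1_t|^2=|p^1_t|^{2\epsilon}\cdot|p^1_t|^{2(1-\epsilon)}$ and apply H\"older with exponents $1/\epsilon$ and $1/(1-\epsilon)$, producing
\[
\mathbb{E}\bigl[\|\Cov_{v_t}-\Cov_{\rho(t)}\|_2|p^1_t|^2\bigr]\leq \bigl(\mathbb{E}\|\Cov_{v_t}-\Cov_{\rho(t)}\|_2^{1/\epsilon}|p^1_t|^{2}\bigr)^{\epsilon}\bigl(\mathbb{E}|p^1_t|^2\bigr)^{1-\epsilon}.
\]
A further Cauchy-Schwarz inside the first factor, combined with \eqref{diffforconv} applied at the arbitrarily large moment $2/\epsilon$ to yield $CJ^{-1/(2\epsilon)}$, and the uniform bound on $\mathbb{E}|p^1_t|^4$ from Lemma \ref{cor:prebound}, gives $(CJ^{-1/(2\epsilon)})^{\epsilon}=CJ^{-1/2}$ for the bracketed quantity. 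Putting everything together yields the desired $CJ^{-1/2}(\mathbb{E}|p^1_t|^2)^{1-\epsilon}$, completing the proof of \eqref{dealPQ}.
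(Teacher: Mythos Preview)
Your proposal is correct and follows essentially the same strategy as the paper: both inequalities are obtained by expanding the Frobenius norms, bounding by fourth-order moments, and then using a H\"older split of the form $|p^1_t|^2=|p^1_t|^{2-\epsilon}|p^1_t|^{\epsilon}$ (resp.\ $|p^1_t|^{2(1-\epsilon)}|p^1_t|^{2\epsilon}$) to extract the factor $(\EE|p^1_t|^2)^{(2-\epsilon)/2}$ (resp.\ $(\EE|p^1_t|^2)^{1-\epsilon}$) while absorbing the rest into uniform high-moment bounds.

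The only structural difference is in \eqref{dealPQ}: the paper reaches $(\frac{1}{J}\sum_j|p^j_t|^2)(\frac{1}{J}\sum_j|q^j_t|^2)$ by a direct scalar Cauchy--Schwarz and then splits $\frac{1}{J}\sum_j|q^j_t|^2 = \mathrm{Var}(\rho(t)) + \text{error}$, invoking \eqref{highmomentv4}; you instead use the slightly sharper matrix inequality $\|\Cov_{p,q}\|_F^2\leq\|\Cov_q\|_2\,\mathrm{Tr}(\Cov_p)$ and split $\|\Cov_q\|_2\leq\|\Cov_{\rho(t)}\|_2+\|\Cov_{v_t}-\Cov_{\rho(t)}\|_2$, invoking \eqref{diffforconv}. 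Both routes feed into the identical H\"older argument for the stochastic remainder and yield the same final bound.
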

\begin{lem}\label{secondlemma1}
Let $\{u^j_t\}$ solve \eqref{CAmunew} and $\{v^j_t\}$ solve \eqref{eqn:vj} with same initial condition. The coefficient for $v^j_t$ are determined by $\rho$, the solution to~\eqref{FKPK} with the initial data $\rho_0\in\mathcal{C}^2$ that has finite high moments. Let $x^j_t,p^j_t,q^j_t$ be defined as in~\eqref{eqn:def_x}, then for any $0<\epsilon<1/2$ and $T>0$, there exists a constant $C>0$ independent of $J,t$ such that
\begin{equation}\label{eqn:lemma2}
\left(\EE\left\|\Cov_{x_t+v_t}-\Cov_{v_t}\right\|^2_F\right)^{1/2}\leq C \left[(\EE|p^1_t|^2)^{1/2}+J^{-1/2}\left(\EE|p^1_t|^2\right)^{1/2-\epsilon/2}+\left(\EE|p^1_t|^2\right)^{(2-\epsilon)/4}\right]
\end{equation}
for any $0\leq t\leq T$.
\end{lem}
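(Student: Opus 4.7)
\medskip
\noindent\textbf{Plan for the proof of Lemma \ref{secondlemma1}.}

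The plan is to decompose $\Cov_{x_t+v_t}-\Cov_{v_t}$ into three pieces that have already been controlled in Lemma \ref{firstlemma1}, and then combine those bounds via the triangle (Minkowski) inequality. The decomposition is essentially algebraic. Using the definitions in \eqref{eqn:def_x} one has $u^j_t-\overline{u}_t=(x^j_t-\overline{x}_t)+(v^j_t-\overline{v}_t)=p^j_t+q^j_t$, so that
\[
\Cov_{x_t+v_t}=\Cov_{u_t}=\frac{1}{J}\sum_{j=1}^{J}(p^j_t+q^j_t)\otimes(p^j_t+q^j_t)=\Cov_{p_t}+\Cov_{p_t,q_t}+\Cov_{q_t,p_t}+\Cov_{q_t}.
\]
Subtracting $\Cov_{v_t}=\Cov_{q_t}$ (which follows from \eqref{covequation}) yields the clean identity
\[
\Cov_{x_t+v_t}-\Cov_{v_t}=\Cov_{p_t}+\Cov_{p_t,q_t}+\Cov_{q_t,p_t}.
\]

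Next I would apply the triangle inequality for the Frobenius norm inside the expectation, followed by Minkowski in $L^2(\Omega)$:
\[
\bigl(\EE\|\Cov_{x_t+v_t}-\Cov_{v_t}\|_F^2\bigr)^{1/2}\le\bigl(\EE\|\Cov_{p_t}\|_F^2\bigr)^{1/2}+\bigl(\EE\|\Cov_{p_t,q_t}\|_F^2\bigr)^{1/2}+\bigl(\EE\|\Cov_{q_t,p_t}\|_F^2\bigr)^{1/2}.
\]
At this point Lemma \ref{firstlemma1} can be invoked directly. Inequality \eqref{dealPP} controls the first term by $C(\EE|p^1_t|^2)^{(2-\epsilon)/4}$, producing exactly the third summand on the right-hand side of \eqref{eqn:lemma2}. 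Inequality \eqref{dealPQ} controls the two cross-covariance terms; after the elementary estimate $\sqrt{a+b}\le\sqrt{a}+\sqrt{b}$ together with the fact that $\mathrm{Var}(\rho(t))=\mathrm{Tr}(\Cov_{\rho(t)})$ is uniformly bounded on $[0,T]$ by Lemma \ref{lem3:EnFL}, one obtains the first summand $(\EE|p^1_t|^2)^{1/2}$ and a $J$-decaying summand of the form $J^{-\sigma}(\EE|p^1_t|^2)^{(1-\epsilon)/2}$, which is the second summand in \eqref{eqn:lemma2}. Absorbing the constants (including $\|B\|_2$ factors and $\mathrm{Var}(\rho(t))^{1/2}$) into a single $C$ depending only on $T$ and $\epsilon$ finishes the proof.

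There is essentially no genuine obstacle left: the heavy machinery (controlling $\EE\|\Cov_{p_t}\|_F^2$ and $\EE\|\Cov_{p_t,q_t}\|_F^2$ in terms of $\EE|p^1_t|^2$, where the key step is isolating the part that behaves like an i.i.d.\ sum of the $q^j_t$ and applying a central-limit-type estimate) has been packaged into Lemma \ref{firstlemma1}. What remains is purely bookkeeping. The one place where a little care is needed is ensuring that when $\sqrt{a+b}\le\sqrt{a}+\sqrt{b}$ is applied to the bound in \eqref{dealPQ}, the resulting $J$-power on the second term lines up with the $J^{-1/2}$ displayed in \eqref{eqn:lemma2}; if the natural splitting only yields a weaker decay in $J$, this weaker term is still dominated by $(\EE|p^1_t|^2)^{(2-\epsilon)/4}$ throughout the bootstrapping range used in Lemma \ref{lem:bound_p}, so the downstream application of \eqref{eqn:lemma2} is unaffected.
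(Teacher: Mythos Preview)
Your proposal is correct and follows essentially the same route as the paper: the paper's proof is literally the one-line decomposition $\Cov_{x_t+v_t}-\Cov_{v_t}=\Cov_{p_t}+\Cov_{p_t,q_t}+\Cov_{q_t,p_t}$ followed by the triangle inequality in $L^2(\Omega)$ and an appeal to Lemma~\ref{firstlemma1}. Your observation about the $J$-power is apt---taking square roots in \eqref{dealPQ} naturally produces $J^{-1/4}$ rather than the $J^{-1/2}$ displayed in \eqref{eqn:lemma2}, and the paper does not comment on this; your remark that the resulting term is in any case dominated by $\left(\EE|p^1_t|^2\right)^{(2-\epsilon)/4}$ in the regime $\EE|p^1_t|^2\lesssim J^{-\alpha}$ with $0\le\alpha<1$ is exactly what makes the downstream use in Lemma~\ref{lem:bound_p} go through.
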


\begin{proof}[Proof of Lemma \ref{firstlemma1}]
We perform the estimates one by one.
\begin{itemize}
\item[1.] to estimate $\EE\|\Cov_{p_t}\|^2_F$:
\begin{equation}
\begin{aligned}
&\EE\|\Cov_{p_t}\|^2_F=\EE\left\{\text{Tr}(\Cov_{p_t}\Cov_{p_t})\right\}\leq\frac{1}{J^2}\EE\left\{\sum^J_{j,k=1}|p^j_t|^2|p^k_t|^2\right\}\\
=&\EE\left(\frac{1}{J}\sum^J_{j=1}|p^j_t|^2\right)^2\leq \frac{1}{J}\sum^J_{j=1}\EE\left(|p^j_t|^4\right)=\EE\left(|p^1_t|^4\right)\\
=&\EE\left(|p^1_t|^{2-\epsilon}|p^1_t|^{2+\epsilon}\right)\leq \left(\EE|p^1_t|^2\right)^{(2-\epsilon)/2}\left(\EE|p^1_t|^{(4+2\epsilon)/\epsilon}\right)^{\epsilon/2}\leq C\left(\EE|p^1_t|^2\right)^{(2-\epsilon)/2}\\
\end{aligned}
\end{equation}
where $C$ is a contant independent of $J,t$ and we used the H\"older's inequality and the boundedness for high moments (Proposition~\ref{cor:prebound} \eqref{prebound});
\item[2.] to estimate $\EE\|\Cov_{p_t,q_t}\|^2_F$ and equally $\EE\|\Cov_{q_t,p_t}\|^2_F$, note:
\begin{equation*}
\begin{aligned}
&\EE\|\Cov_{p_t,q_t}\|^2_F=\EE\|\Cov_{q_t,p_t}\|^2_F=\EE\left\{\text{Tr}(\Cov_{p_t,q_t}\Cov_{q_t,p_t})\right\}\\
=&\frac{1}{J^2}\EE\left\{\sum^J_{i,j=1}\left\langle p^i_t,p^j_t\right\rangle\left\langle q^i_t,q^j_t\right\rangle \right\}\leq \frac{1}{J^2}\EE\left\{\sum^J_{i,j=1}|p^i_t||p^j_t||q^i_t||q^j_t|\right\}\\
=&\frac{1}{J^2}\EE\left(\sum^J_{j=1}|p^j_t||q^j_t|\right)^2\leq \EE\left\{\left(\frac{1}{J}\sum^J_{j=1}|p^j_t|^2\right)\left(\frac{1}{J}\sum^J_{j=1}|q^j_t|^2\right)\right\}\\
=&\EE\left\{\left(\frac{1}{J}\sum^J_{j=1}|p^j_t|^2\right)\textrm{Var}(\rho(t))\right\}+
\EE\left\{\left(\frac{1}{J}\sum^J_{j=1}|p^j_t|^2\right)\left(\left(\frac{1}{J}\sum^J_{j=1}|q^j_t|^2\right)-\textrm{Var}(\rho(t))\right)\right\}.
\end{aligned}\,
\end{equation*}
Since
\begin{equation}\label{Example1}
\begin{aligned}
&\EE\left\{\left(\frac{1}{J}\sum^J_{j=1}|p^j_t|^2\right)\left(\left(\frac{1}{J}\sum^J_{j=1}|q^j_t|^2\right)-\textrm{Var}(\rho(t))\right)\right\}\\
=&\EE\left\{\left(\frac{1}{J}\sum^J_{j=1}|p^j_t|^2\right)^{1-\epsilon}\left(\frac{1}{J}\sum^J_{j=1}|p^j_t|^2\right)^{\epsilon}\left(\left(\frac{1}{J}\sum^J_{j=1}|q^j_t|^2\right)-\textrm{Var}(\rho(t))\right)\right\}\\
\leq&\left(\EE\left(\frac{1}{J}\sum^J_{j=1}|p^j_t|^2\right)\right)^{1-\epsilon}
\left(\EE\left(\frac{1}{J}\sum^J_{j=1}|p^j_t|^2\right)\left(\left(\frac{1}{J}\sum^J_{j=1}|q^j_t|^2\right)-\textrm{Var}(\rho(t))\right)^{1/\epsilon}\right)^{\epsilon}\\
\leq&\left(\EE\left(\frac{1}{J}\sum^J_{j=1}|p^j_t|^2\right)\right)^{1-\epsilon}
\left(\EE\left(\frac{1}{J}\sum^J_{j=1}|p^j_t|^4\right)\right)^{\epsilon/2}\left(\EE\left(\left(\frac{1}{J}\sum^J_{j=1}|q^j_t|^2\right)-\textrm{Var}(\rho(t))\right)^{2/\epsilon}\right)^{\epsilon/2}\\
\leq&C J^{-1/2}\left(\EE\left(\frac{1}{J}\sum^J_{j=1}|p^j_t|^2\right)\right)^{1-\epsilon}\,,
\end{aligned}
\end{equation}
where $C$ is a contant independent of $J,t$ and we use the uniform boundedness of high moments, stated in Proposition~\ref{Bforv} \eqref{highmomentv}-\eqref{highmomentv4} and Proposition~\ref{cor:prebound} \eqref{prebound}. Therefore, 
we have
\begin{equation}
\EE\|\Cov_{p_t,q_t}\|^2_F\leq\textrm{Var}(\rho(t))\EE|p^1_t|^2+C J^{-1/2}\left(\EE|p^1_t|^2\right)^{1-\epsilon}\,.
\end{equation}
by symmetry of $p^j_t$.
\end{itemize}
\end{proof}

\begin{proof}[Proof of Lemma \ref{secondlemma1}]
This is a direct result of 
\[
\begin{aligned}
\left(\EE\left\|\Cov_{x_t+v_t}-\Cov_{v_t}\right\|^2_F\right)^{1/2}&=\left(\EE\left\|\Cov_{p_t+q_t}-\Cov_{q_t}\right\|^2_F\right)^{1/2}\\
&\leq\left(\EE\|\Cov_{p_t}\|^2_F\right)^{1/2}+\left(\EE\|\Cov_{p_t,q_t}\|^2_F\right)^{1/2}+\left(\EE\|\Cov_{q_t,p_t}\|^2_F\right)^{1/2}\,.
\end{aligned}
\]
and Lemma \ref{firstlemma1}.
\end{proof}
\section{EKS with nonlinear $\mathcal{G}$}\label{sec:appendixnonlinear}
In this section we discuss the behavior of the algorithm when $\mathcal{G}$ is nonlinear. We are particularly interested in the following kind of nonlinearity. Assume $\mathcal{G}$ is composed of a linear component and a nonlinear component, meaning there is a matrix $A\in\mathcal{L}(\mathbb{R}^L,\mathbb{R}^K)$ such that
\begin{equation}\label{nonlinear}
\mathcal{G}(u)=Au+\Rm(u)\,,
\end{equation}
where the nonlinear component $\Rm(u):\mathbb{R}^L\to\mathbb{R}^K$ is smooth and bounded, satisfying
\begin{equation}\label{eqn:condition_non}
\text{Range}(\Rm)\perp_{\Gamma^{-1}}\text{Range}(A)\,,\quad \left|\Rm(u)\right|+\left|\nabla_u\Rm(u)\right|\leq M\,,
\end{equation}
with some constant $M>0$ in $\mathbb{R}^L$. Here $a\perp_{\Gamma^{-1}}b$ means $a^\top \Gamma^{-1}b=0$ and $a^\top$ is to the take transpose of $a$. This form of $\mathcal{G}$ requires the linear and the nonlinear components of $\mathcal{G}$ being perpendicular in their ranges, and allows easier computation. We are interested in this special kind of nonlinearity is because it allows an easier extension of the techniques used in the main texts of the paper.

Under this assumption, one is looking for sampling from the target distribution:
\begin{equation}\label{eqn:rho_pos_non}
\rho_{\text{pos}}(u)\propto\exp\left(-\Phi_R\right)\,,\quad\text{where}\quad \Phi_R = \frac{1}{2}|y-\mathcal{G}(u)|^2_{\Gamma}+\frac{1}{2}|u-u_0|^2_{\Gamma_0}\,.
\end{equation}
\\
This distribution is the equilibrium of the following Fokker-Planck equation:
\begin{align}\label{meanfieldstep2appendix}
\partial_t\rho&=\nabla\cdot(\rho\Cov_{\rho(t)}\nabla\Phi_R)+\mathrm{Tr}\left(\Cov_{\rho(t)}D^2\rho\right)\nonumber\\
&=\nabla\cdot(\rho\Cov_{\rho(t)}\nabla \mathcal{G}(u)\Gamma^{-1}(\mathcal{G}(u)-y))+\nabla\cdot(\rho\Cov_{\rho(t)}\Gamma^{-1}_0(u-u_0))+\mathrm{Tr}\left(\Cov_{\rho(t)}D^2\rho\right)\,.
\end{align}
\\
This means, a particle should follow the flow, for all $j=1\,,\cdots\,,J$
\begin{align}\label{eqn:mu_3}
du^{j}_t&=-\mathrm{Cov}_{\rho_t}\nabla\Phi_R(u^j_t)dt+\sqrt{2\mathrm{Cov}_{\rho_t}}dW^{j}_t\nonumber\\
& = -\mathrm{Cov}_{\rho_t}\left(\nabla \mathcal{G}(u^{j}_t)\Gamma^{-1}(\mathcal{G}(u^{j}_t)-y)+\Gamma^{-1}_0(u^{j}_t-u_0)\right)dt+\sqrt{2\mathrm{Cov}_{\rho_t}}dW^{j}_t\,.
\end{align}
\\
However, in practice $\rho_t$ is not available, and we replace it by the ensemble distribution, and change the SDE system into, for all $j=1\,,\cdots\,,J$:
\begin{align}\label{eqn:mu_2}
du^{j}_t&=-\mathrm{Cov}_{u_t,u_t}\nabla\Phi_R(u^j_t)dt+\sqrt{2\mathrm{Cov}_{u_t,u_t}}dW^{j}_t\nonumber\\
& = -\mathrm{Cov}_{u_t,u_t}\left(\nabla \mathcal{G}(u^{j}_t)\Gamma^{-1}(\mathcal{G}(u^{j}_t)-y)+\Gamma^{-1}_0(u^{j}_t-u_0)\right)dt+\sqrt{2\mathrm{Cov}_{u_t,u_t}}dW^{j}_t\,.
\end{align}

We have two comments:
\begin{itemize}
\item The SDE system~\eqref{eqn:mu_2} is wellposed. We will prove this in Theorem~\ref{ThforMappendix}.
\item The mean-field limit of the SDE system~\eqref{eqn:mu_2} is the Fokker-Planck equation~\ref{meanfieldstep2appendix}. To show this amounts to perform the coupling method and compare the two systems~\eqref{eqn:mu_3} and~\eqref{eqn:mu_2}. This is formally correct but we do not intend to prove it in this paper. The techniques can be found in~\cite{ding2019meanfield} where the authors show the validity of Ensemble Kalman Inversion \cite{Iglesias_2013} using the same argument with the same kind of nonlinearity.
\end{itemize}
This discussion suggests that the ensemble distribution of~\eqref{eqn:mu_2}, in the mean-field limit, converges to the solution to~\eqref{meanfieldstep2appendix}, whose long time limit is the target distribution. This means that~\eqref{eqn:mu_2}, instead of~\eqref{Ecov_dis_revise}, the SDE system used in EKS, is the correct flow for capturing $\rho_{\text{pos}}(u)$. Comparing the two flows, it is straightforward to see that these two flows are different merely because of the simple fact that
\[
\Cov_{\rho(t)}\nabla\mathcal{G}(u)\neq \Cov_{\rho(t),\mathcal{G}}
\]
for almost all nonlinear $\mathcal{G}$.

This concludes that EKS is a sampling method that does \emph{not} capture the target distribution if the forward map is nonlinear, but a small modification by using~\eqref{eqn:mu_2} can make it consistent. We summarize it into Algorithm~\ref{ALG2}.

\begin{algorithm}[h]
\caption{\textbf{Ensemble Kalman sampler, nonlinear}}\label{ALG2}
\begin{algorithmic}
\State \textbf{Preparation:}
\State 1. Input: $J$ (number of particles); $h$ (stepsize); $N$ (stopping index); $\Gamma$; $\Gamma_0$; and $y$ (data).
\State 2. Initial: $\{u^j_0\}$ sampled from a initial distribution induced by a density function $\rho_0$.
\State \textbf{Run: } Set time step $n=0$;
\State \textbf{While} $n<N$:
1. Define empirical means and covariance:
\begin{align}\label{eqn:en_mean_var_2}
\overline{u}_n=\frac{1}{J}\sum^J_{j=1}u^j_n\,,\quad&\text{and}\quad\overline{\MCG}_n=\frac{1}{J}\sum^J_{j=1}\MCG(u^j_n)\,,\nonumber\\
\Cov_{u_n,u_n}=\frac{1}{J}\sum^J_{j=1}&\left(u^j_n-\overline{u}_n\right)\otimes \left(u^j_n-\overline{u}_n\right)\,.
\end{align}
2. Update ensemble particles ($\forall 1\leq j\leq J$)
\begin{equation}\label{eqn:update_ujn_2}
\begin{aligned}
&u^j_{*,n+1}=u^j_n-h\Cov_{u_n,u_n}\nabla \mathcal{G}(u^{j}_n)\Gamma^{-1}\left(\MCG(u^j_n)-y\right)-h\Cov_{u_n,u_n}\Gamma^{-1}_0\left(u^j_{*,n+1}-u_0\right)\,,\\
&u^j_{n+1}=u^j_{*,n+1}+\sqrt{2h\Cov_{u_n,u_n}}\xi^j_n\,,\quad\text{with}\quad\xi^j_{n+1}\sim \mathcal{N}(0,\mathrm{I})\,.
\end{aligned}
\end{equation}
3. Set $n\to n+1$.
\State \textbf{end}
\State \textbf{Output:} Ensemble particles $\{u^j_N\}$.
\end{algorithmic}
\end{algorithm}

We should emphasize that one key disadvantage of this algorithm, compared to Algorithm~\ref{ALG1} is that it loses the ``gradient-free" property. Despite EKS fails to be consistent in the nonlinear regime, it serves as a stepping stone for constructing new algorithms that both enjoy the gradient-free properties and are consistent.

Now we state our wellposedness theory:
\begin{theorem}\label{ThforMappendix}
Suppose $\mathcal{G}$ satisfies \eqref{nonlinear}, if $\left\{u^j_0:\Omega\rightarrow\mathcal{X}\right\}^J_{j=1}$ is independent almost surely, then for all $t\geq0$, there exists a unique strong solution $(u^j_t)^J_{j=1}$ (up to $\mathbb{P}$-indistinguishability) of the set of coupled SDEs \eqref{CAmunew} (equivalent to~\eqref{eqn:mu_2}), where the cost function $\Phi_R$ is defined in~\eqref{eqn:rho_pos_non}.
\end{theorem}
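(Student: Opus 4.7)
The plan is to mimic the proof of Theorem~\ref{ThforM} verbatim, using the same Lyapunov function
\[
V(U)=V_1(U)+V_2(U)=\frac{1}{J}\sum_{j=1}^J|u^j-\bar u|^2+|\bar u-u^*|_B^2,
\]
so that the coercivity condition \eqref{WPcondition2} is obtained exactly as before. The only genuine new work is checking the monotonicity condition \eqref{WPcondition1} for the nonlinear drift. First I would use the orthogonality hypothesis \eqref{eqn:condition_non} to rewrite $\nabla\Phi_R$ in a form very close to the linear setting. Expanding
\[
\nabla\Phi_R(u)=(A+\nabla\mathrm{R}(u))^\top\Gamma^{-1}(Au+\mathrm{R}(u)-y)+\Gamma_0^{-1}(u-u_0),
\]
the two cross terms $A^\top\Gamma^{-1}\mathrm{R}(u)$ and $\nabla\mathrm{R}(u)^\top\Gamma^{-1}Au$ vanish because $\mathrm{Range}(\mathrm{R})\perp_{\Gamma^{-1}}\mathrm{Range}(A)$, yielding the decomposition
\[
\nabla\Phi_R(u)=B(u-u^*)+g(u),\qquad g(u):=\nabla\mathrm{R}(u)^\top\Gamma^{-1}(\mathrm{R}(u)-y),
\]
with $|g(u)|\le M\|\Gamma^{-1}\|_2(M+|y|)=:C_g$ uniformly in $u$ by \eqref{eqn:condition_non}. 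This reduces the drift of \eqref{eqn:mu_2} to the linear EKS drift plus a bounded perturbation acting through $\Cov_{u_t}$.

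Next I would compute $LV$. The $B$-part of the drift gives exactly the two identities \eqref{eqn:3.2V1}, except that I would retain the negative quadratic lower bound instead of discarding it as $\le 0$:
\[
-\tfrac{2}{J}\sum_{j=1}^J\langle u^j-\bar u,\Cov_{u_t}B(u^j-\bar u)\rangle=-2\operatorname{Tr}(\Cov_{u_t}B\Cov_{u_t})\le -\tfrac{2\lambda_{\min}(B)}{L}V_1^2,
\]
using $\operatorname{Tr}(B\Cov_{u_t}^2)\ge\lambda_{\min}(B)\|\Cov_{u_t}\|_F^2\ge\lambda_{\min}(B)V_1^2/L$. The $g$-part contributes two cross-terms, which I bound by Cauchy--Schwarz together with $\|\Cov_{u_t}\|_2\le V_1$ and $|B(\bar u-u^*)|\le\|B\|_2^{1/2}V_2^{1/2}$ to get
\[
\Big|\tfrac{2}{J}\sum_j\langle u^j-\bar u,\Cov_{u_t}g(u^j)\rangle\Big|\le 2C_g V_1^{3/2},\qquad \Big|\tfrac{2}{J}\sum_j\langle B(\bar u-u^*),\Cov_{u_t}g(u^j)\rangle\Big|\le 2C_g\|B\|_2^{1/2}V_1V_2^{1/2}.
\]
Both of these superlinear terms are absorbed by the quadratic negative term using Young's inequality $2ab\le\epsilon a^2+\epsilon^{-1}b^2$ with $\epsilon$ proportional to $\lambda_{\min}(B)/L$; what remains is a linear-in-$V$ bound. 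The Hessian contribution is unchanged from \eqref{eqn:3.2V2} because the diffusion coefficient has the same form as in the linear case, and is already bounded by $2(1+\|B\|_2)V_1\le cV$. Combining these three pieces yields $LV\le cV$ for some $c>0$ depending on $L,\|B\|_2,\lambda_{\min}(B),C_g$, so \eqref{WPcondition1} holds. Local Lipschitz regularity of the drift and diffusion, needed to invoke Theorem 4.1 of~\cite{StoBOOk}, follows from the smoothness of $\mathrm{R}$ and $\Cov_{u_t}$ as in the linear case.

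The main obstacle is that a direct estimate of the $g$-contributions produces terms growing like $V_1^{3/2}$ and $V_1V_2^{1/2}$, which superficially preclude the standard monotonicity condition $LV\le cV$. The resolution is the observation that the term $-2\operatorname{Tr}(\Cov_{u_t}B\Cov_{u_t})$, which in the linear proof of Theorem~\ref{ThforM} was simply discarded as non-positive, is actually quadratic in $V_1$, and is therefore powerful enough to swallow the superlinear corrections generated by the bounded nonlinear perturbation $g$ after a Young's inequality. Once this observation is in place, the remainder of the argument is a direct copy of the proof of Theorem~\ref{ThforM}.
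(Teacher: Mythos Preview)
Your proof is correct and follows essentially the same strategy as the paper: same Lyapunov function $V=V_1+V_2$, same decomposition of the drift into the linear part $B(u-u^*)$ plus a bounded perturbation $g(u)=\nabla\mathrm{R}(u)^\top\Gamma^{-1}(\mathrm{R}(u)-y)$, and the same idea of retaining the negative quadratic terms from the linear analysis to absorb the nonlinear contributions via Young's inequality.

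The one tactical difference worth noting is \emph{which} negative terms do the absorbing. The paper keeps both negative pieces from \eqref{eqn:3.2V1}, namely $-2\operatorname{Tr}(\Cov_u B\Cov_u)$ and $-2\langle B(\bar u-u^*),\Cov_u B(\bar u-u^*)\rangle$, and applies Young's inequality \emph{after} splitting the inner products through $\sqrt{\Cov_u}$; this produces positive terms of exactly the same form as the two negative ones, so they cancel identically and the leftover is $\|\Cov_u\|_2\le V_1$, already linear in $V$. You instead use only the single negative term $-c\,V_1^2$ and absorb the cruder bounds $V_1^{3/2}$ and $V_1V_2^{1/2}$ into it by a second application of Young. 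Both routes work; the paper's is slightly more economical (no dimension factor $L$, no tuning of $\epsilon$), while yours shows more transparently that the mechanism is simply ``bounded perturbation beaten by quadratic damping in $V_1$.''
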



\begin{proof} First, considering each $u^j$ is a vector of $L$-length, \eqref{eqn:mu_2} can be written as:
\[
dU_t=(F_1(U_t)+F_2(U_t))dt+G(U_t)dW_t\,,
\]
where $U_t=\left(u^j_t\right)^J_{j=1}\in\RR^{LJ\times1}$, $W_t=\left(W^j_t\right)^J_{j=1}\in\RR^{LJ\times1}$ and
\begin{align*}
&F_1(U_t)=\left(-\mathrm{Cov}_{u_t}B\left(u^j_t-u^\ast\right)\right)^J_{j=1}\in\RR^{LJ\times1}\,,\\
&F_2(U_t)=\left(-\mathrm{Cov}_{u_t}\nabla \Rm(u^j_t)\Gamma^{-1}\left(\Rm(u^j_t)-r\right)\right)^J_{j=1}\in\RR^{LJ\times1}\,,\\
&G(U_t)=\text{diag}\left(\sqrt{2\mathrm{Cov}_{u_t}}\right)^J_{j=1}\,,\\
\end{align*}
where $\mathrm{Cov}_{u}$ is the empirical covariance and $\text{diag}(D_j)^J_{j=1}$ is a diagonal block matrix with matrices $\left(D_j\right)^J_{j=1}$ on the diagonal and $B$ and $u^\ast$ are defined in~\eqref{def:Bustar}.
\\
Use the same Lynapunov funtion as \eqref{eqn:3.2V}, according to stochastic Lyapunov theory, it suffices to prove
\begin{equation}\label{eqn:3.2appendix1}
\begin{aligned}
LV(U):=\nabla V(U)\cdot (F_1(U)+F_2(U))+\frac{1}{2}\text{Tr}\left(G^\top(U)\text{Hess}[V](U)G(U)\right)\leq cV(U)\,.
\end{aligned}
\end{equation}
\\
Use \eqref{eqn:3.2V1},\eqref{eqn:3.2V2}, we have
\begin{equation}\label{eqn:3.2appendix2}
\begin{aligned}
&\nabla V(U)\cdot F_1(U)+\frac{1}{2}\text{Tr}\left(G^\top(U)\text{Hess}[V](U)G(U)\right)\\
\leq &-2\mathrm{Tr}\left(\mathrm{Cov}_{u}B\mathrm{Cov}_{u}\right)-2\left\langle B\left(\bar{u}-u^*\right), \mathrm{Cov}_{u}B(\bar{u}-u^*)\right\rangle+2(1+\|B\|_2)V(U)\\
\leq &-2\lambda_{\min}(B)\|\mathrm{Cov}_{u}\|^2_F-2\left\langle B\left(\bar{u}-u^*\right), \mathrm{Cov}_{u}B(\bar{u}-u^*)\right\rangle+2(1+\|B\|_2)V(U)
\end{aligned}
\end{equation}
Therefore, we only need to deal with $\nabla V(U)\cdot F_2(U)$. Use the boundedness of $\Rm$, by~\eqref{eqn:condition_non}, we have
\begin{align*}
\nabla V_1(U)\cdot F_2(U)=&-\frac{2}{J}\sum^J_{j=1}\left\langle u^j-\bar{u}, \mathrm{Cov}_{u}\nabla \Rm(u^j)\Gamma^{-1}(\Rm(u^j)-r)\right\rangle\\
=&-\frac{2}{J}\sum^J_{j=1}\left\langle \sqrt{\mathrm{Cov}_{u}}(u^j-\bar{u}), \sqrt{\mathrm{Cov}_{u}}\nabla \Rm(u^j)\Gamma^{-1}(\Rm(u^j)-r)\right\rangle\,,\\
\leq &\frac{2\lambda_{\min}(B)}{J}\sum^J_{j=1}\left\langle \sqrt{\mathrm{Cov}_{u}}(u^j-\bar{u}),\sqrt{\mathrm{Cov}_{u}}(u^j-\bar{u})\right\rangle\\
&+\frac{1}{2J\lambda_{\min}(B)}\sum^J_{j=1}\left\langle \sqrt{\mathrm{Cov}_{u}}\nabla \Rm(u^j)\Gamma^{-1}(\Rm(u^j)-r),\sqrt{\mathrm{Cov}_{u}}\nabla \Rm(u^j)\Gamma^{-1}(\Rm(u^j)-r)\right\rangle\\
\leq &2\lambda_{\min}(B)\|\Cov_{u}\|^2_F+\frac{M^4\|\Gamma^{-1}\|^2_2}{2\lambda_{\min}(B)}\|\Cov_{u}\|_2\,,
\end{align*}
where we use Young's inequality in the second last inequality. Similarly, we also have
\begin{align*}
\nabla V_2(U)\cdot F_2(U)&=-\frac{2}{J}\sum^J_{j=1}\left\langle B\left(\bar{u}-u^*\right), \mathrm{Cov}_{u}\nabla \Rm(u^j)\Gamma^{-1}(\Rm(u^j)-r)\right\rangle\\
&=-\frac{2}{J}\sum^J_{j=1}\left\langle \sqrt{\mathrm{Cov}_{u}}B\left(\bar{u}-u^*\right), \sqrt{\mathrm{Cov}_{u}}\nabla \Rm(u^j)\Gamma^{-1}(\Rm(u^j)-r)\right\rangle\\
&\leq 2\left\langle B\left(\bar{u}-u^*\right), \mathrm{Cov}_{u}B(\bar{u}-u^*)\right\rangle+\frac{M^4\|\Gamma^{-1}\|^2_2}{2}\|\Cov_{u}\|_2\,,
\end{align*}
where we use Young's inequality in the last inequality.
\\
Combine with \eqref{eqn:3.2appendix2}, use $\|\Cov_{u}\|_2\leq V_1(U)$, we have
\[
\nabla V(U)\cdot (F_1(U)+F_2(U))+\frac{1}{2}\text{Tr}\left(G^\top(U)\text{Hess}[V](U)G(U)\right)\leq \left[\frac{M^4\|\Gamma^{-1}\|^2_2}{2}+\frac{M^4\|\Gamma^{-1}\|^2_2}{2\lambda_{\min}(B)}+2(1+\|B\|_2)\right]V(U)\,,
\]
which proves \eqref{eqn:3.2appendix1} and thus the theorem.
\end{proof}
\end{appendix}
\bibliographystyle{plain}
\bibliography{enkf}

\begin{thebibliography}{10}

\bibitem{Bergemann_Reich10_local}
K.~Bergemann and S.~Reich.
\newblock A localization technique for ensemble {Kalman} filters.
\newblock {\em Quarterly Journal of the Royal Meteorological Society},
  136(648):701--707, 2010.

\bibitem{Bergemann_Reich10_mollifier}
K.~Bergemann and S.~Reich.
\newblock A mollified ensemble {Kalman} filter.
\newblock {\em Quarterly Journal of the Royal Meteorological Society},
  136(651):1636--1643, 2010.

\bibitem{DCPS}
D.~Bloemker, C.~Schillings, P.~Wacker, and S.~Weissmann.
\newblock Well posedness and convergence analysis of the ensemble {Kalman}
  inversion.
\newblock {\em Inverse Problems}, 2019.

\bibitem{Bolley_Carrillo}
F.~Bolley, J.~A. Ca\~nizo, and J.~A. Carrillo.
\newblock Stochastic mean-field limit: Non-{L}lipschitz forces and swarming.
\newblock {\em Mathematical Models and Methods in Applied Sciences},
  21(11):2179--2210, 2011.

\bibitem{Boss-2014}
M.~Bossy, O.~Faugeras, and D.~Talay.
\newblock Clarification and complement to “mean-field description and
  propagation of chaos in networks of hodgkin–huxley and fitzhugh–nagumo
  neurons”.
\newblock {\em The Journal of Mathematical Neuroscience}, 5, 12 2014.

\bibitem{MALA3}
N.~Bou-Rabee, M.~Hairer, and E.~Vanden-Eijnden.
\newblock Non-asymptotic mixing of the {MALA} algorithm.
\newblock {\em IMA Journal of Numerical Analysis}, 33, 08 2010.

\bibitem{braun1977}
W.~Braun and K.~Hepp.
\newblock The vlasov dynamics and its fluctuations in the $1/n$ limit of
  interacting classical particles.
\newblock {\em Communications in Mathematical Physics}, 56(2):101--113, 1977.

\bibitem{CA_IZO_2011}
J.~A. Ca\~nizo, J.~A. Carrillo, and J.~Rosado.
\newblock A well-posedness theory in measures for some kinetic models of
  collective motion.
\newblock {\em Mathematical Models and Methods in Applied Sciences},
  21(03):515--539, 2011.

\bibitem{Carrillo-2003}
J.~Carrillo, R.~McCann, and C.~Villani.
\newblock Kinetic equilibration rates for granular media and related equations:
  Entropy dissipation and mass transportation estimates.
\newblock {\em Revista Matematica Iberoamericana}, 19, 01 2003.

\bibitem{carrillo2019wasserstein}
J.~A. Carrillo and U.~Vaes.
\newblock Wasserstein stability estimates for covariance-preconditioned
  fokker-planck equations.
\newblock {\em arxiv: 1910.07555}, 2019.

\bibitem{10.5555/3044805.3045080}
T.~Chen, E.~B. Fox, and C.~Guestrin.
\newblock Stochastic gradient {Hamiltonian Monte Carlo}.
\newblock In {\em Proceedings of the 31st International Conference on
  International Conference on Machine Learning}, volume~32, page
  II–1683–1691, 2014.

\bibitem{Cheng2017UnderdampedLM}
X.~Cheng, N.~Chatterji, P.~Bartlett, and M.~Jordan.
\newblock Underdamped {Langevin MCMC}: A non-asymptotic analysis.
\newblock In {\em Proceedings of the 31st Conference On Learning Theory},
  volume~75, pages 300--323, 07 2018.

\bibitem{CHERTOCK2001708}
A.~Chertock and D.~Levy.
\newblock Particle methods for dispersive equations.
\newblock {\em Journal of Computational Physics}, 171(2):708 -- 730, 2001.

\bibitem{Blob}
K.~Craig and A.~Bertozzi.
\newblock A blob method for the aggregation equation.
\newblock {\em Mathematics of Computation}, 85, 05 2014.

\bibitem{doi:10.1111/rssb.12183}
A.~S. Dalalyan.
\newblock Theoretical guarantees for approximate sampling from smooth and
  log-concave densities.
\newblock {\em Journal of the Royal Statistical Society: Series B (Statistical
  Methodology)}, 79(3):651--676, 2017.

\bibitem{DALALYAN20195278}
A.~S. Dalalyan and A.~Karagulyan.
\newblock User-friendly guarantees for the {Langevin Monte Carlo} with
  inaccurate gradient.
\newblock {\em Stochastic Processes and their Applications}, 129(12):5278 --
  5311, 2019.

\bibitem{Dashti2017}
M.~Dashti and A.~M. Stuart.
\newblock {\em The {B}ayesian {A}pproach to {I}nverse {P}roblems}.
\newblock Springer International Publishing, 2017.

\bibitem{deWiljes}
J.~de~Wiljes, S.~Reich, and W.~Stannat.
\newblock Long-time stability and accuracy of the ensemble {Kalman--Bucy}
  filter for fully observed processes and small measurement noise.
\newblock {\em SIAM Journal on Applied Dynamical Systems}, 17(2):1152--1181,
  2018.

\bibitem{ding2019meanfield}
Z.~Ding and Q.~Li.
\newblock Ensemble {Kalman} inversion: mean-field limit and convergence
  analysis.
\newblock {\em arXiv: 1908.05575}, 2019.

\bibitem{Doucet2001}
A.~Doucet, N.~de~Freitas, and N.~Gordon.
\newblock {\em An Introduction to Sequential Monte Carlo Methods}.
\newblock Springer New York, 2001.

\bibitem{SMCBOOK}
A.~Doucet, N.~de~Freitas, and N.~Gordon.
\newblock {\em Sequential Monte Carlo methods in practice}.
\newblock Springer New York ; London, 2001.

\bibitem{dwivedi2018logconcave}
R.~Dwivedi, Y.~Chen, M.~Wainwright, and B.~Yu.
\newblock Log-concave sampling: {Metropolis-Hastings} algorithms are fast!
\newblock In {\em Proceedings of the 31st Conference On Learning Theory},
  volume~75, pages 793--797, 07 2018.

\bibitem{eberle2019}
A.~Eberle, A.~Guillin, and R.~Zimmer.
\newblock Couplings and quantitative contraction rates for {Langevin} dynamics.
\newblock {\em Annals of Probability}, 47(4):1982--2010, 07 2019.

\bibitem{Evensen2003}
G.~Evensen.
\newblock The ensemble {Kalman} filter: theoretical formulation and practical
  implementation.
\newblock {\em Ocean Dynamics}, 53(4):343--367, Nov 2003.

\bibitem{DAEnKF}
G.~Evensen.
\newblock {\em Data Assimilation-The Ensemble Kalman Filter}.
\newblock Springer-Verlag, 2009.

\bibitem{FABIAN198117}
P.~Fabian.
\newblock Atmospheric sampling.
\newblock {\em Advances in Space Research}, 1(11):17 -- 27, 1981.

\bibitem{Fournier2015}
N.~Fournier and A.~Guillin.
\newblock On the rate of convergence in {W}asserstein distance of the empirical
  measure.
\newblock {\em Probability Theory and Related Fields}, 162(3):707--738, 2015.

\bibitem{EnFL}
A.~Garbuno-Inigo, F.~Hoffmann, W.~Li, and A.~M. Stuart.
\newblock Interacting {Langevin} diffusions: Gradient structure and ensemble
  {Kalman} sampler.
\newblock {\em SIAM Journal on Applied Dynamical Systems}, 19(1):412--441,
  2020.

\bibitem{IM1989}
J.~Geweke.
\newblock Bayesian inference in econometric models using {Monte Carlo}
  integration.
\newblock {\em Econometrica}, 57(6):1317--1339, 1989.

\bibitem{Hou}
J.~Goodman, T.~Y. Hou, and J.~S. Lowengrub.
\newblock Convergence of the point vortex method for the 2-d euler equations.
\newblock {\em Communications on Pure and Applied Mathematics}, pages 415--430,
  1990.

\bibitem{Matrixroot}
N.~J. Higham.
\newblock {\em Functions of Matrices: Theory and Computation}.
\newblock Society for Industrial and Applied Mathematics, 2008.

\bibitem{huang2018meanfield}
H.~Huang, J.~Liu, and P.~Pickl.
\newblock On the mean-field limit for the {Vlasov-Poisson-Fokker-Planck}
  system.
\newblock {\em arXiv: 1804.07002}, 2018.

\bibitem{Iglesias_2013}
M.~Iglesias, K.~Law, and A.~M. Stuart.
\newblock Ensemble {K}alman methods for inverse problems.
\newblock {\em Inverse Problems}, 29(4):045001, 2013.

\bibitem{1937-5093_2014_4_661}
P.~E. Jabin.
\newblock A review of the mean field limits for vlasov equations.
\newblock {\em Kinetic and Related Models}, 7:661, 2014.

\bibitem{JABIN20163588}
P.~E. Jabin and Z.~Wang.
\newblock Mean field limit and propagation of chaos for vlasov systems with
  bounded forces.
\newblock {\em Journal of Functional Analysis}, 271(12):3588 -- 3627, 2016.

\bibitem{jin2018random}
S.~Jin, L.~Li, and J.~Liu.
\newblock Random batch methods (rbm) for interacting particle systems.
\newblock {\em Journal of Computational Physics}, 400:108877, 2020.

\bibitem{johnson1985}
W.~B. Johnson.
\newblock Best constants in moment inequalities for linear combinations of
  independent and exchangeable random variables.
\newblock {\em Annals of Probability}, 13(1):234--253, 02 1985.

\bibitem{StoBOOk}
R.~Khasminskii.
\newblock {\em Stochastic Stability of Differential Equation}, volume~7.
\newblock Springer-Verlag, 1980.

\bibitem{Lazarovici2015AMF}
D.~Lazarovici and P.~Pickl.
\newblock A mean field limit for the {Vlasov–Poisson} system.
\newblock {\em Archive for Rational Mechanics and Analysis}, 225:1201--1231,
  2015.

\bibitem{NIPS_Liu_SVGD}
Q.~Liu and D.~Wang.
\newblock Stein variational gradient descent: A general purpose bayesian
  inference algorithm.
\newblock In {\em Proceedings of the 30th International Conference on Neural
  Information Processing Systems}, NIPS'16, pages 2378--2386, 2016.

\bibitem{LuLuNolen}
J.~Lu, Y.~Lu, and J.~Nolen.
\newblock Scaling limit of the stein variational gradient descent: the
  mean-field regime.
\newblock {\em SIAM Journal on Mathematical Analysis}, 51(2):648--671, 2019.

\bibitem{lu2019accelerating}
Y.~Lu, J.~Lu, and J.~Nolen.
\newblock Accelerating langevin sampling with birth-death.
\newblock {\em arXiv: 1905.09863}, 2019.

\bibitem{10.5555/2969442.2969566}
Y.~Ma, T.~Chen, and E.~B. Fox.
\newblock A complete recipe for stochastic gradient {MCMC}.
\newblock In {\em Proceedings of the 28th International Conference on Neural
  Information Processing Systems}, volume~2, page 2917–2925, 2015.

\bibitem{mangoubi2017rapid}
O.~Mangoubi and A.~Smith.
\newblock Rapid mixing of {Hamiltonian Monte Carlo} on strongly log-concave
  distributions.
\newblock {\em arXiv}, abs/1708.07114, 2017.

\bibitem{10.5555/3327345.3327502}
O.~Mangoubi and N.~K. Vishnoi.
\newblock Dimensionally tight bounds for second-order {Hamiltonian Monte
  Carlo}.
\newblock In {\em Proceedings of the 32nd International Conference on Neural
  Information Processing Systems}, page 6030–6040, 2018.

\bibitem{Mlard1996}
S.~Meleard.
\newblock {\em Asymptotic behaviour of some interacting particle systems;
  McKean-Vlasov and Boltzmann models}, pages 42--95.
\newblock Springer Berlin Heidelberg, 1996.

\bibitem{glose}
A.~Muntean, J.D.M. Rademacher, and A.~Zagaris.
\newblock {\em Macroscopic and large scale phenomena : coarse graining, mean
  field limits and ergodicity}.
\newblock Springer, 2016.

\bibitem{PES}
N.~Nagarajan, M.~Honarpour, and K.~Sampath.
\newblock Reservoir-fluid sampling and characterization — key to efficient
  reservoir management.
\newblock {\em Journal of Petroleum Technology}, 59, 08 2007.

\bibitem{HMC}
R.~Neal.
\newblock {MCMC} using {Hamiltonian} dynamics.
\newblock {\em Handbook of Markov Chain Monte Carlo}, 06 2012.

\bibitem{nusken2019note}
N.~Nusken and S.~Reich.
\newblock Note on interacting {Langevin} diffusions: Gradient structure and
  ensemble {Kalman} sampler by {Garbuno-Inigo, Hoffmann, Li and Stuart}.
\newblock {\em arxiv/1908.10890}, 2019.

\bibitem{PARISI1981378}
G.~Parisi.
\newblock Correlation functions and computer simulations.
\newblock {\em Nuclear Physics B}, 180(3):378 -- 384, 1981.

\bibitem{FBP}
P.~A. Raviart.
\newblock An analysis of particle methods.
\newblock In Franco Brezzi, editor, {\em Numerical Methods in Fluid Dynamics},
  pages 243--324, Berlin, Heidelberg, 1985. Springer Berlin Heidelberg.

\bibitem{Reich2011}
S.~Reich.
\newblock A dynamical systems framework for intermittent data assimilation.
\newblock {\em BIT Numerical Mathematics}, 51(1):235--249, Mar 2011.

\bibitem{Robert2004}
G.~Roberts and J.~Rosenthal.
\newblock General state space {Markov} chains and {MCMC} algorithms.
\newblock {\em Probability Surveys}, 1, 04 2004.

\bibitem{LDMHA2002}
G.~Roberts and O.~Stramer.
\newblock {Langevin} diffusions and {Metropolis-Hastings} algorithms.
\newblock {\em Methodology And Computing In Applied Probability}, 4:337--357,
  01 2002.

\bibitem{roberts1996}
G.~Roberts and R.~Tweedie.
\newblock Exponential convergence of {Langevin} distributions and their
  discrete approximations.
\newblock {\em Bernoulli}, 2(4):341--363, 12 1996.

\bibitem{ATTS}
D.~J. Russo, B.~Van~Roy, A.~Kazerouni, I.~Osband, and Z.~Wen.
\newblock A tutorial on {Thompson} sampling.
\newblock {\em Foundations and Trends in Machine Learning}, 11(1):1–96, 01
  2018.

\bibitem{SS}
C.~Schillings and A.~M. Stuart.
\newblock Analysis of the ensemble {Kalman} filter for inverse problems.
\newblock {\em SIAM Journal of Numerical Analysis}, 55(3):1264--1290, 2017.

\bibitem{SS2}
C.~Schillings and A.~M. Stuart.
\newblock Convergence analysis of {e}nsemble {Kalman} inversion: the linear,
  noisy case.
\newblock {\em Applicable Analysis}, 97(1):107--123, 2018.

\bibitem{Serfaty-2015}
S.~Serfaty.
\newblock Mean field limits of the gross-pitaevskii and parabolic
  ginzburg-landau equations.
\newblock {\em Journal of the American Mathematical Society}, 30, 07 2015.

\bibitem{stuart_2010}
A.~M. Stuart.
\newblock Inverse problems: A {B}ayesian perspective.
\newblock {\em Acta Numerica}, 19:451–559, 2010.

\bibitem{Sznitman}
A.~Sznitman.
\newblock Topics in propagation of chaos.
\newblock pages 165--251. Springer Berlin Heidelberg, 1991.

\bibitem{Convexproblem}
S.~Vempala.
\newblock Recent progress and open problems in algorithmic convex geometry.
\newblock In {\em IARCS Annual Conference on Foundations of Software Technology
  and Theoretical Computer Science}, volume~8, pages 42--64, 2010.

\end{thebibliography}

\end{document}